\title{\sffamily Dualization of Signal Recovery 
Problems\footnote{Contact author: 
P. L. Combettes, {\ttfamily plc@math.jussieu.fr},
phone: +33 1 4427 6319, fax: +33 1 4427 7200. The work of 
P. L. Combettes was supported the Agence Nationale de la Recherche 
under grant ANR-08-BLAN-0294-02. The work of 
D\hspace{-1.6ex}\raise 0.4ex\hbox{-}\hspace{.8ex}inh D\~ung and
B$\grave{\text{\u{a}}}$ng C\^ong V\~u was supported by the
Vietnam National Foundation for Science and Technology Development.
}}
\author{Patrick L. Combettes,$^1$ 
D\hspace{-1.6ex}\raise 0.4ex\hbox{-}\hspace{.8ex}inh D\~ung,$^2$ 
and B$\grave{\text{\u{a}}}$ng C\^ong V\~u$^3$\\[5mm]
\small$\!^1$UPMC Universit\'e Paris 06\\
\small Laboratoire Jacques-Louis Lions -- UMR 7598\\
\small 75005 Paris, France\\
\small {\ttfamily plc@math.jussieu.fr}\\[4mm]
\small $\!^2$Vietnam National University\\
\small Information Technology Institute\\
\small Hanoi, Vietnam\\
\small {\ttfamily dinhdung@vnu.edu.vn}\\[4mm]
\small $\!^3$Vietnam National University\\
\small Department of Mathematics -- Mechanics -- Informatics\\ 
\small Hanoi, Vietnam\\
\small {\ttfamily bangvc@vnu.edu.vn}
}
\date{~}
\newcommand{\Frac}[2]{\displaystyle{\frac{#1}{#2}}} 
\newcommand{\scal}[2]{{\left\langle{{#1}\mid{#2}}\right\rangle}}
\newcommand{\menge}[2]{\big\{{#1}~\big |~{#2}\big\}} 
\newcommand{\Menge}[2]{\bigg\{{#1}~\bigg |~{#2}\bigg\}} 
\newcommand{\BL}{\ensuremath{\EuScript B}\,}
\newcommand{\RX}{\ensuremath{\left]-\infty,+\infty\right]}}
\newcommand{\soft}[1]{\ensuremath{{\operatorname{soft}}_{{#1}}\,}}
\newcommand{\HH}{\ensuremath{{\mathcal H}}}
\newcommand{\GG}{\ensuremath{{\mathcal G}}}
\newcommand{\emp}{\ensuremath{{\varnothing}}}
\newcommand{\prox}{\ensuremath{\operatorname{prox}}}
\newcommand{\Id}{\ensuremath{\operatorname{Id}}\,}
\newcommand{\cone}{\ensuremath{\operatorname{cone}}}
\newcommand{\RR}{\ensuremath{\mathbb{R}}}
\newcommand{\RP}{\ensuremath{\left[0,+\infty\right[}}
\newcommand{\RPP}{\ensuremath{\left]0,+\infty\right[}}
\newcommand{\NN}{\ensuremath{\mathbb N}}
\newcommand{\KK}{\ensuremath{\mathbb K}}
\newcommand{\LL}{\ensuremath{\mathbb L}}
\newcommand{\weakly}{\ensuremath{\:\rightharpoonup\:}}
\newcommand{\exi}{\ensuremath{\exists\,}}
\newcommand{\pinf}{\ensuremath{{+\infty}}}
\newcommand{\dom}{\ensuremath{\operatorname{dom}}}
\newcommand{\sri}{\ensuremath{\operatorname{sri}}}
\newcommand{\core}{\ensuremath{\operatorname{core}}}
\newcommand{\reli}{\ensuremath{\operatorname{ri}}}
\newcommand{\spc}{\ensuremath{\overline{\operatorname{span}}\,}}
\newcommand{\spa}{\ensuremath{{\operatorname{span}}\,}}
\newcommand{\sign}{\ensuremath{\operatorname{sign}}}
\newcommand{\inte}{\ensuremath{\operatorname{int}}}
\newcommand{\rint}{\ensuremath{\operatorname{ri}}}
\newtheorem{theorem}{Theorem}[section]
\newtheorem{lemma}[theorem]{Lemma}
\theoremstyle{plain}{\theorembodyfont{\rmfamily}%
}
\newtheorem{proposition}[theorem]{Proposition}
\theoremstyle{plain}{\theorembodyfont{\rmfamily}%
}
\theoremstyle{plain}{\theorembodyfont{\rmfamily}%
\newtheorem{algorithm}[theorem]{Algorithm}}
\theoremstyle{plain}{\theorembodyfont{\rmfamily}%
\newtheorem{example}[theorem]{Example}}
\theoremstyle{plain}{\theorembodyfont{\rmfamily}%
\newtheorem{remark}[theorem]{Remark}}
\theoremstyle{plain}{\theorembodyfont{\rmfamily}%
}
\theoremstyle{plain}{\theorembodyfont{\rmfamily}%
\newtheorem{problem}[theorem]{Problem}}
\numberwithin{equation}{section}
\begin{document}
\maketitle

\begin{abstract}
In convex optimization, duality theory can sometimes lead to simpler 
solution methods than those resulting from direct primal analysis. 
In this paper, this principle is applied to a class of composite 
variational problems arising in particular in signal recovery. 
These problems are not easily amenable to solution by 
current methods but they feature Fenchel-Moreau-Rockafellar dual
problems that can be solved by forward-backward splitting.
The proposed algorithm produces simultaneously a sequence converging 
weakly to a dual solution, and a sequence converging strongly to 
the primal solution. Our framework is shown to capture and extend 
several existing duality-based signal recovery methods and to 
be applicable to a variety of new problems beyond their scope.
\end{abstract}

{\bfseries Keywords}
Convex optimization, Denoising, Dictionary, Dykstra-like algorithm,
Duality, Forward-backward splitting, Image reconstruction, Image 
restoration, Inverse problem, Signal recovery, Primal-dual algorithm, 
Proximity operator, Total variation

{\bfseries Mathematics Subject Classifications (2010)}
90C25, 49N15, 94A12, 94A08

\newpage
\section{Introduction}
Over the years, several structured frameworks have been proposed to 
unify the analysis and the numerical solution methods of classes of 
signal (including image) recovery problems. 
An early contribution was made by Youla in 
1978 \cite{Youl78}. He showed that several signal recovery problems, 
including those of \cite{Gerc74,Papo75}, shared a simple common 
geometrical structure and could be reduced to the following 
formulation in a Hilbert space $\HH$ with scalar product
$\scal{\cdot}{\cdot}$ and associated norm $\|\cdot\|$: 
find the signal in a closed vector 
subspace $C$ which admits a known projection $r$ onto a closed vector 
subspace $V$, and which is at minimum distance from some reference 
signal $z$. This amounts to solving the variational problem
\begin{equation}
\label{e:hanoi2009-01-07}
\underset{\substack{x\in C\\P_Vx=r}}{\mathrm{minimize}}\;\;
\frac12\|x-z\|^2,
\end{equation}
where $P_V$ denotes the projector onto $V$. Abstract Hilbert space signal 
recovery problems have also been investigated by other authors. For
instance, in 1965, Levi \cite{Levi65} considered the problem of finding 
the minimum energy band-limited signal fitting $N$ linear measurements. 
In the Hilbert space $\HH=L^2(\RR)$, the underlying variational problem
is to 
\begin{equation}
\label{e:1965}
\underset{\substack{x\in C\\ 
\scal{x}{s_1}=\rho_1\\
~~~~\vdots\\
\scal{x}{s_N}=\rho_N}}{\text{minimize}}\;\;
\frac12\|x\|^2,
\end{equation}
where $C$ is the subspace of band-limited signals, 
$(s_i)_{1\leq i\leq N}\in\HH^N$ are the measurement signals, and 
$(\rho_i)_{1\leq i\leq N}\in\RR^N$ are the measurements. In \cite{Pott93},
Potter and Arun observed that, for a general closed convex set $C$, 
the formulation \eqref{e:1965} models a variety of problems, ranging 
from spectral estimation \cite{Bent88,Stei85} and tomography
\cite{Medo87}, to other inverse problems \cite{Bert85}. In addition,
they employed an elegant duality framework to solve it, which led to 
the following result. 

\begin{proposition}{\rm\cite[Theorems~1~and~3]{Pott93}}
\label{p:lee93}
Set $r=(\rho_i)_{1\leq i\leq N}$ and 
$L\colon\HH\to\RR^N\colon x\mapsto(\scal{x}{s_i})_{1\leq i\leq N}$, 
and let $\gamma\in\left]0,2\right[$. Suppose that 
$\sum_{i=1}^N\|s_i\|^2\leq 1$ and that $r$ lies in the relative
interior of $L(C)$. Set 
\begin{equation}
\label{e:main91}
w_0\in\RR^N\quad\text{and}\quad(\forall n\in\NN)\quad
w_{n+1}=w_n+\gamma\big(r-LP_CL^*w_n\big),
\end{equation}
where $L^*\colon\RR^N\to\HH\colon(\nu_i)_{1\leq i\leq N}\mapsto%
\sum_{i=1}^N\nu_is_i$ is the adjoint of $L$.
Then $(w_n)_{n\in\NN}$ converges to a point $w$ such that $LP_CL^*w=r$
and $P_CL^*w$ is the solution to \eqref{e:1965}.
\end{proposition}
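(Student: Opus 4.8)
\; The plan is to recognize \eqref{e:1965} as the primal member of a Fenchel--Moreau--Rockafellar pair whose dual problem is smooth, to observe that \eqref{e:main91} is the forward--backward (here, plain gradient) algorithm for that dual, and then to transfer a dual solution back to the primal. Concretely, set $f=\tfrac12\|\cdot\|^2+\iota_C$ and $g=\iota_{\{r\}}$ (indicator functions), so that \eqref{e:1965} is $\minim_{x\in\HH}\,f(x)+g(Lx)$. Since $r\in L(C)$ the feasible set $C\cap L^{-1}(\{r\})$ is nonempty, and since $f$ is proper, lower semicontinuous and strongly convex, \eqref{e:1965} has a unique solution $\overline{x}$ (the projection of $0$ onto $C\cap L^{-1}(\{r\})$). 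One computes $g^*\colon v\mapsto\scal{v}{r}$ and $f^*\colon u\mapsto\tfrac12\|u\|^2-\tfrac12 d_C(u)^2$; because $f$ is $1$-strongly convex, $f^*$ is finite-valued and differentiable on $\HH$ with $\nabla f^*=P_C$ (the supremum defining $f^*(u)$ being attained at $P_Cu$). Hence the dual problem is $\minim_{w\in\RR^N}\psi(w)$, where $\psi\colon w\mapsto f^*(L^*w)-\scal{w}{r}$ is convex and differentiable with $\nabla\psi\colon w\mapsto LP_CL^*w-r$, and therefore \eqref{e:main91} is exactly the iteration $w_{n+1}=w_n-\gamma\nabla\psi(w_n)$.

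To obtain the convergence of $(w_n)_{n\in\NN}$, first note that $\sum_{i=1}^N\|s_i\|^2\le1$ and the Cauchy--Schwarz inequality give $\|L^*w\|^2=\big\|\sum_i\nu_is_i\big\|^2\le\|w\|^2\sum_i\|s_i\|^2\le\|w\|^2$, so $\|L\|\le1$. Put $T:=LP_CL^*$; combining $\|L\|\le1$ with the firm nonexpansiveness of $P_C$ yields, for all $w,w'$ in $\RR^N$,
\begin{equation*}
\|Tw-Tw'\|^2\le\|P_CL^*w-P_CL^*w'\|^2\le\scal{Tw-Tw'}{w-w'},
\end{equation*}
so $T$, and hence $w\mapsto Tw-r$, is firmly nonexpansive, i.e.\ $1$-cocoercive. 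Consequently, for $\gamma\in\left]0,2\right[$, the operator $R\colon w\mapsto w+\gamma(r-Tw)$ is $(\gamma/2)$-averaged, $w_{n+1}=Rw_n$, and
\begin{equation*}
\operatorname{Fix}R=\menge{w\in\RR^N}{LP_CL^*w=r}=\operatorname{zer}\nabla\psi=\operatorname{argmin}\psi.
\end{equation*}
It remains to show that this set is nonempty, i.e.\ that $\psi$ attains its infimum, and this is where the hypothesis $r\in\reli L(C)$ is used: it says precisely that $0\in\reli\big(L(\dom f)-\dom g\big)$, a subset of the finite-dimensional space $\RR^N$, which is the Fenchel--Rockafellar qualification condition; hence strong duality holds \emph{and} the dual infimum is attained. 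Since $R$ is averaged with nonempty fixed point set, the Krasnosel'skii--Mann theorem (norm convergence, $\RR^N$ being finite-dimensional) gives $w_n\to w$ for some $w\in\operatorname{Fix}R$, i.e.\ $LP_CL^*w=r$.

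Finally, to identify $P_CL^*w$ with $\overline{x}$, put $\widehat{x}=P_CL^*w$. Then $\widehat{x}\in C$ and $L\widehat{x}=LP_CL^*w=r$, so $\widehat{x}$ is feasible for \eqref{e:1965}. The characterization of the projection gives $\scal{L^*w-\widehat{x}}{z-\widehat{x}}\le0$ for every $z\in C$, hence for every feasible $z$
\begin{equation*}
\scal{\widehat{x}}{z-\widehat{x}}\ge\scal{L^*w}{z-\widehat{x}}=\scal{w}{Lz-L\widehat{x}}=\scal{w}{r-r}=0,
\end{equation*}
which is exactly the first-order optimality condition for minimizing the differentiable convex function $x\mapsto\tfrac12\|x\|^2$ over the convex feasible set. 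By uniqueness of the solution, $\widehat{x}=\overline{x}$, so $P_CL^*w$ solves \eqref{e:1965} (and, incidentally, $P_CL^*w_n\to\overline{x}$). The step I expect to be the real obstacle is the nonemptiness of $\operatorname{Fix}R$, equivalently the solvability of the dual problem: the rest is the routine verification that \eqref{e:main91} is the gradient method for a convex function with $1$-cocoercive gradient, followed by a standard averaged-operator fixed-point argument, and the role of $\sum_i\|s_i\|^2\le1$ is only to legitimize the step-size range $\left]0,2\right[$. For that nonemptiness one genuinely needs $r\in\reli L(C)$, invoked through a version of Fenchel duality valid under a relative-interior qualification --- which is licit here precisely because the constraint space $\RR^N$ is finite-dimensional, even though $\HH$ need not be.
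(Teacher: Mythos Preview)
Your proof is correct and follows essentially the same route as the paper's own derivation in Example~\ref{ex:91}: cast \eqref{e:1965} as a Fenchel--Moreau--Rockafellar primal, identify \eqref{e:main91} as gradient descent on the smooth dual with $\nabla\psi(w)=LP_CL^*w-r$, use the finite-dimensional qualification $r\in\reli L(C)$ to secure dual attainment, and then recover the primal solution from any dual optimizer. The only difference is packaging: the paper obtains the result as a specialization of its general framework (Theorem~\ref{t:1} via Proposition~\ref{p:91}, with $f=\iota_C$, $g=\iota_{\{0\}}$, $z=0$, and the change of variable $w_n=-v_n$), whereas you absorb the quadratic into $f$ and carry out the same steps directly, replacing the appeal to Proposition~\ref{p:2} by an explicit variational-inequality check that $P_CL^*w$ satisfies first-order optimality on the feasible set.
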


Duality theory plays a central role in convex optimization
\cite{Ekel99,More66,Rock74,Zali02} and it has been used, in 
various forms and with different objectives, in several places in 
signal recovery, e.g., 
\cite{Bent88,Borw96,Cham04,Chan99,Smms05,Dest07,Fadi09,Hint06,%
Iuse93,Leah86,Weis09}; let us add that, since the completion of 
the present paper \cite{Arxi09}, other aspects of duality in imaging
have been investigated in \cite{Borw10}. 
For our purposes, the most suitable type of duality is the so-called 
Fenchel-Moreau-Rockafellar duality, which associates to a composite
minimization problem a ``dual'' minimization problem involving the 
conjugates of the functions and the adjoint of the linear operator
acting in the primal problem. In general, the dual problem sheds a new 
light on the properties of the primal problem and enriches its 
analysis. Moreover, in certain specific situations, it is actually 
possible to solve the dual problem and to recover a solution to 
the primal problem from any dual solution.
Such a scenario underlies Proposition~\ref{p:lee93}: the primal problem
\eqref{e:1965} is difficult to solve but, if $C$ is simple enough, 
the dual problem can be solved efficiently and, furthermore, a primal 
solution can be recovered explicitly.
This principle is also explicitly or implicitly present in other signal
recovery problems. For instance, the variational denoising problem 
\begin{equation}
\label{e:2005}
\underset{x\in\HH}{\mathrm{minimize}}\;\;
g(Lx)+\frac12\|x-z\|^2,
\end{equation}
where $z$ is a noisy observation of an ideal signal, $L$ is a bounded 
linear operator from $\HH$ to some Hilbert space $\GG$, and 
$g\colon\GG\to\RX$ is a proper lower semicontinuous convex function,
can often be approached efficiently using duality arguments 
\cite{Smms05}. A popular development in this direction is the
total variation denoising algorithm proposed in \cite{Cham04} and
refined in \cite{Cham05}. 

The objective of the present paper is to devise a duality 
framework that captures problems such as \eqref{e:hanoi2009-01-07}, 
\eqref{e:1965}, and \eqref{e:2005} and leads to improved algorithms 
and convergence results, in an effort to standardize the use of 
duality techniques in signal recovery and extend their range of 
potential applications. More specifically, we focus on a class of 
convex variational problems which satisfy the following.
\begin{itemize}
\item[(a)]
They cover the above minimization problems.
\item[(b)]
They are not easy to solve directly, but they admit a 
Fenchel-Moreau-Rockafellar dual which can be solved reliably 
in the sense that an implementable 
algorithm is available with proven weak or strong convergence to a 
solution of the whole sequence of iterates it generates. In some cases, 
asymptotic properties of a primal sequence are also desirable.
Here ``implementable'' is taken in the classical sense of
\cite{Pola71}: the algorithm does not involve subprograms (e.g., 
``oracles'' or ``black-boxes'') which are not guaranteed to 
converge in a finite number of steps.
\item[(c)]
They allow for the construction of a primal solution from any dual 
solution.
\end{itemize}
A problem formulation which complies with these requirements is 
the following, where we denote by $\sri C$ the strong relative interior 
of a convex set $C$ (see \eqref{e:sri} and Remark~\ref{r:2008}).

\begin{problem}[primal problem]
\label{prob:1}
Let $\HH$ and $\GG$ be real Hilbert spaces, let $z\in\HH$, let $r\in\GG$, 
let $f\colon\HH\to\RX$ and $g\colon\GG\to\RX$ be lower semicontinuous
convex functions, and let $L\colon\HH\to\GG$ be a nonzero linear 
bounded operator such that the qualification condition 
\begin{equation}
\label{e:1938}
r\in\sri\big(L(\dom f)-\dom g\big)
\end{equation}
holds. The problem is to 
\begin{equation}
\label{e:prob1}
\underset{x\in\HH}{\mathrm{minimize}}\;\;
f(x)+g(Lx-r)+\frac12\|x-z\|^2.
\end{equation}
\end{problem}

In connection with (a), it is clear that \eqref{e:prob1} covers 
\eqref{e:2005} for $f=0$.  Moreover, if we let $f$ and $g$ be the 
indicator functions (see \eqref{e:iota}) of closed convex sets 
$C\subset\HH$ and $D\subset\GG$, respectively, then \eqref{e:prob1} 
reduces to the best approximation problem
\begin{equation}
\label{e:porquerolles2009-06-12}
\underset{\substack{x\in C\\Lx-r\in D}}{\mathrm{minimize}}\;\;
\frac12\|x-z\|^2,
\end{equation}
which captures both \eqref{e:hanoi2009-01-07} and \eqref{e:1965} in the
case when $C$ is a closed vector subspace and $D=\{0\}$. Indeed, 
\eqref{e:hanoi2009-01-07} corresponds to $\GG=\HH$ and $L=P_V$, while 
\eqref{e:1965} corresponds to $\GG=\RR^N$, 
$L\colon\HH\to\RR^N\colon x\mapsto(\scal{x}{s_i})_{1\leq i\leq N}$,
$r=(\rho_i)_{1\leq i\leq N}$, and $z=0$. As will be seen in 
Section~\ref{sec:3}, Problem~\ref{prob:1} models a broad range of 
additional signal recovery problems. 

In connection with (b), it is natural to ask whether the minimization
problem \eqref{e:prob1} can be solved reliably by existing algorithms. 
Let us set
\begin{equation}
\label{e:24juin2009}
h\colon\HH\to\RX\colon x\mapsto f(x)+g(Lx-r). 
\end{equation}
Then it follows from
\eqref{e:1938} that $h$ is a proper lower semicontinuous convex
function. Hence its proximity operator $\prox_h$, which maps 
each $y\in\HH$ to the unique minimizer of the function 
$x\mapsto h(x)+\|y-x\|^2/2$, is well defined 
(see Section~\ref{sec:prox1}). Accordingly, Problem~\ref{prob:1} 
possesses a unique solution, which can be concisely written as 
\begin{equation}
\label{e:rio-mai2009}
x=\prox_h z.
\end{equation}
Since no-closed form expression exists for the proximity operator of 
composite functions such as $h$, one can contemplate the use of 
splitting strategies to construct $\prox_h z$ since \eqref{e:prob1} 
is of the form
\begin{equation}
\label{e:prob0}
\underset{x\in\HH}{\text{minimize}}\;\;f_1(x)+f_2(x),
\end{equation}
where 
\begin{equation}
\label{e:porquerolles2009-06-11}
f_1\colon x\mapsto f(x)+\frac12\|x-z\|^2\quad\text{and}\quad
f_2\colon x\mapsto g(Lx-r) 
\end{equation}
are lower semicontinuous convex functions from $\HH$ to $\RX$. To tackle 
\eqref{e:prob0}, a first splitting framework is that described in 
\cite{Smms05}, which requires the additional assumption that $f_2$ be
Lipschitz-differentiable on $\HH$ (see also 
\cite{Biou07,Lore08,Caij08,Caij09,Chau07,Siop07,Daub07,Forn07} 
for recent work within this setting). In this case, \eqref{e:prob0}
can be solved by the proximal forward-backward algorithm, which is 
governed by the updating rule
\begin{equation}
\label{e:main2004}
\left\lfloor
\begin{array}{lll}
x_{n+\frac12}&\!\!\!=\!\!\!&\nabla f_2(x_n)+a_{2,n}\\
x_{n+1}      &\!\!\!=\!\!\!&x_n+\lambda_n\Big(\prox_{\gamma_n f_1}
\big(x_n-\gamma_nx_{n+\frac12}\big)+a_{1,n}-x_n\Big),
\end{array}
\right.\\
\end{equation}
where $\lambda_n>0$ and $\gamma_n>0$, and where $a_{1,n}$ and 
$a_{2,n}$ model respectively tolerances in the approximate 
implementation of the proximity operator of $f_1$ and the 
gradient of $f_2$. Precise
convergence results for the iterates $(x_n)_{n\in\NN}$ can be found
in Theorem~\ref{t:0}. Let us add that there exist variants of this
splitting method, which do not guarantee convergence of the iterates
but do provide an optimal (in the sense of \cite{Nemi83}) 
$O(1/n^2)$ rate of convergence of the objective values 
\cite{Beck09}. A 
limitation of this first framework is that it imposes that $g$ be
Lipschitz-differentiable and therefore excludes key problems such as
\eqref{e:porquerolles2009-06-12}. An alternative framework, which 
does not demand any smoothness assumption in \eqref{e:prob0}, is 
investigated in \cite{Jsts07}. It employs the Douglas-Rachford 
splitting algorithm, which revolves around the updating rule 
\begin{equation}
\label{e:7ans}
\left\lfloor
\begin{array}{lll}
x_{n+\frac12}&\!\!\!=\!\!\!&\prox_{\gamma f_2}x_n+a_{2,n}\\
x_{n+1}      &\!\!\!=\!\!\!&x_n+
\lambda_n\Big(\prox_{\gamma f_1}\big(2x_{n+\frac12}-x_n\big)
+a_{1,n}-x_{n+\frac12}\Big),
\end{array}
\right.\\
\end{equation}
where $\lambda_n>0$ and $\gamma>0$, and where $a_{1,n}$ and 
$a_{2,n}$ model tolerances in the approximate implementation of 
the proximity operators of $f_1$ and $f_2$, respectively 
(see \cite[Theorem~20]{Jsts07} for precise convergence results 
and \cite{Chau09} for further applications).
However, this approach requires that the proximity operator of
the composite function $f_2$ in \eqref{e:porquerolles2009-06-11} 
be computable to within some quantifiable error. Unfortunately, 
this is not possible in general, as explicit expressions 
of $\prox_{g\circ L}$ in terms of $\prox_g$ require stringent assumptions,
for instance $L\circ L^*=\kappa\Id$ for some $\kappa>0$ (see
Example~\ref{ex:1931}), which does not hold in the case of
\eqref{e:1965} and many other important problems. A third framework
that appears to be relevant is that of \cite{Pjo208}, which is 
tailored for problems of the form 
\begin{equation}
\label{e:prob10}
\underset{x\in\HH}{\text{minimize}}\;\;h_1(x)+h_2(x)+\frac12\|x-z\|^2,
\end{equation}
where $h_1$ and $h_2$ are lower semicontinuous convex functions from 
$\HH$ to $\RX$ such that $\dom h_1\cap \dom h_2\neq\emp$. This 
formulation coincides with our setting for $h_1=f$ and 
$h_2\colon x\mapsto g(Lx-r)$. The Dykstra-like 
algorithm devised in \cite{Pjo208} to solve
\eqref{e:prob10} is governed by the iteration 
\begin{equation}
\label{e:6ans}
\begin{array}{l}
\operatorname{Initialization}\\
\left\lfloor
\begin{array}{lll}
y_0=z\\
q_0=0\\
p_0=0
\end{array}
\right.\\[6mm]
\operatorname{For}\;n=0,1,\ldots\\
\left\lfloor
\begin{array}{lll}
x_n&=&\prox_{h_2}(y_n+q_n)\\
q_{n+1}&=&y_n+q_n-x_n\\
y_{n+1}&=&\prox_{h_1}(x_n+p_n)\\
p_{n+1}&=&x_n+p_n-y_{n+1}
\end{array}
\right.\\
\end{array}
\end{equation}
and therefore requires that the proximity operators of $h_1$ and $h_2$ 
be computable explicitly. As just discussed, this is seldom possible 
in the case of the composite function $h_2$. To sum up, existing
splitting techniques do not offer satisfactory options to solve
Problem~\ref{prob:1} and alternative routes must be explored. The
cornerstone of our paper is that, by contrast, Problem~\ref{prob:1} 
can be solved reliably via Fenchel-Moreau-Rockafellar duality so long 
as the operators $\prox_f$ and $\prox_g$ can be evaluated to within 
some quantifiable error, which will be shown to be possible in 
a wide variety of problems. 

The paper is organized as follows. In Section~\ref{sec:2-} we provide
the convex analytical background required in subsequent sections and, in
particular, we review proximity operators. In Section~\ref{sec:2}, we
show that Problem~\ref{prob:1} satisfies properties (b) and (c). We then 
derive the Fenchel-Moreau-Rockafellar dual of Problem~\ref{prob:1}
and then show that it is amenable to solution by forward-backward
splitting. The resulting primal-dual algorithm involves the functions 
$f$ and $g$, as well as the operator $L$, separately and therefore 
achieves full splitting of the constituents of the primal problem. 
We show that the primal sequence produced by the algorithm converges 
strongly to the solution to Problem~\ref{prob:1}, and that the dual
sequence converges weakly to a solution to the dual problem. 
Finally, in Section~\ref{sec:3}, we highlight applications of the 
proposed duality framework to best approximation problems,
denoising problems using dictionaries, and recovery problems involving
support functions. In particular, we extend and provide
formal convergence results for the total variation denoising
algorithm proposed in \cite{Cham05}. Although signal
recovery applications are emphasized in the present paper, the proposed
duality framework is applicable to any variational problem conforming to
the format described in Problem~\ref{prob:1}.

\section{Convex-analytical tools}
\label{sec:2-}

\subsection{General notation}
Throughout the paper, $\HH$ and $\GG$ are real Hilbert spaces, 
and $\BL(\HH,\GG)$ is the space of bounded 
linear operators from $\HH$ to $\GG$. The identity operator is denoted
by $\Id\!$, the adjoint of an operator $T\in\BL(\HH,\GG)$ by $T^*$,
the scalar products of both $\HH$ and $\GG$ by 
$\scal{\cdot}{\cdot}$ and the associated norms by $\|\cdot\|$. 
Moreover, $\weakly$ and $\to$ denote respectively weak and strong
convergence. Finally, we denote by $\Gamma_0(\HH)$ the class of lower 
semicontinuous convex functions $\varphi\colon\HH\to\RX$ which 
are proper in the sense that 
$\dom\varphi=\menge{x\in\HH}{\varphi(x)<\pinf}\neq\emp$. 

\subsection{Convex sets and functions}
We provide some background on convex analysis; for a detailed account,
see \cite{Zali02} and, for finite-dimensional spaces, \cite{Rock70}.

Let $C$ be a nonempty convex subset of $\HH$. The indicator function of 
$C$ is
\begin{equation}
\label{e:iota}
\iota_C\colon x\mapsto
\begin{cases}
0,&\text{if}\;\;x\in C;\\
\pinf,&\text{if}\;\;x\notin C,
\end{cases}
\end{equation}
the distance function of $C$ is 
\begin{equation}
\label{e:d_C}
d_C\colon\HH\to\RP\colon x\mapsto\inf_{y\in C}\|x-y\|,
\end{equation}
the support function of $C$ is
\begin{equation}
\label{e:support}
\sigma_C\colon\HH\to\RX\colon{u}\mapsto\sup_{x\in C}\scal{x}{u},
\end{equation}
and the conical hull of $C$ is  
\begin{equation}
\label{e:cone}
\cone C=\bigcup_{\lambda>0}\menge{\lambda x}{x\in C}.
\end{equation}
If $C$ is also closed, the projection of a point $x$ in $\HH$ onto 
$C$ is the unique point $P_Cx$ in $C$ such that $\|x-P_Cx\|=d_C(x)$. 
We denote by $\inte C$ the interior of $C$, by
$\spa C$ the span of $C$, and by $\spc C$ the closure
of $\spa C$. The core of $C$ is 
$\core C=\menge{x\in C}{\cone (C-x)=\HH}$,
the strong relative interior of $C$ is 
\begin{equation}
\label{e:sri}
\sri C=\menge{x\in C}{\cone (C-x)=\spc (C-x)},
\end{equation}
and the relative interior of $C$ is 
$\reli C=\menge{x\in C}{\cone (C-x)=\spa (C-x)}$.
We have 
\begin{equation}
\label{e:palawan-mai08-1}
\inte C\subset\core C\subset\sri C\subset\reli C\subset C.
\end{equation}
The strong relative interior is therefore an extension of the notion
of an interior. This extension is particularly important in convex
analysis as many useful sets have empty interior infinite-dimensional 
spaces. 

\begin{remark}
\label{r:2008}
The qualification condition \eqref{e:1938} in Problem~\ref{prob:1} is 
rather mild. In view of \eqref{e:palawan-mai08-1}, it is satisfied in 
particular when $r$ belongs to the core and, a fortiori, to the 
interior of $L(\dom f)-\dom g$; the latter is for instance satisfied 
when $L(\dom f)\cap(r+\inte\dom g)\neq\emp$. 
If $f$ and $g$ are proper, then \eqref{e:1938} is also satisfied when 
$L(\dom f)-\dom g=\HH$ and, a fortiori, when $f$ is 
finite-valued and $L$ is surjective, or when $g$ is finite-valued.
If $\GG$ is finite-dimensional, then \eqref{e:1938} reduces to 
\cite[Section~6]{Rock70} 
\begin{equation}
\label{e:2009-06-30}
r\in\rint\big(L(\dom f)-\dom g\big)=(\rint L(\dom f))-\rint \dom g,
\end{equation}
i.e., $(\rint L(\dom f))\cap(r+\rint\dom g)\neq\emp$.
\end{remark}

Let $\varphi\in\Gamma_0(\HH)$. The conjugate of $\varphi$
is the function $\varphi^*\in\Gamma_0(\HH)$ defined by 
\begin{equation}
\label{e:conjugate}
(\forall u\in\HH)\quad\varphi^*(u)=\sup_{x\in\HH}\scal{x}{u}-\varphi(x). 
\end{equation}
The Fenchel-Moreau theorem states that $\varphi^{**}=\varphi$.
The subdifferential of $\varphi$ is the set-valued operator
\begin{equation}
\label{e:subdiff}
\partial\varphi\colon\HH\to 2^{\HH}\colon x\mapsto
\menge{u\in\HH}{(\forall y\in\HH)\;\:
\scal{y-x}{u}+\varphi(x)\leq\varphi(y)}.
\end{equation}
We have
\begin{equation}
\label{e:terminal2E}
(\forall (x,u)\in\HH\times\HH)\quad u\in\partial\varphi(x)\quad
\Leftrightarrow\quad x\in\partial\varphi^*(u).
\end{equation}
Moreover, if $\varphi$ is G\^ateaux differentiable at $x$, then 
\begin{equation}
\label{e:moreau23}
\partial\varphi(x)=\{\nabla\varphi(x)\}. 
\end{equation}
Fermat's rule states that
\begin{equation}
\label{e:fermat}
(\forall x\in\HH)\quad x\in\operatorname{Argmin}\varphi=
\menge{x\in\dom\varphi}{(\forall y\in\HH)\;\;\varphi(x)\leq\varphi(y)}
\quad\Leftrightarrow\quad 0\in\partial\varphi(x).
\end{equation}
If $\operatorname{Argmin}\varphi$ is a singleton, we denote by 
$\operatorname{argmin}_{y\in\HH}\varphi(y)$ the unique minimizer 
of $\varphi$. 

\begin{lemma} {\rm\cite[Theorem~2.8.3]{Zali02}}
\label{l:4}
Let $\varphi\in\Gamma_0(\HH)$, let $\psi\in\Gamma_0(\GG)$, and 
let $M\in\BL(\HH,\GG)$ be such that $0\in\sri(M(\dom\varphi)-\dom\psi)$.
Then $\partial(\varphi+\psi\circ M)=\partial\varphi+
M^*\circ(\partial\psi)\circ M$.
\end{lemma}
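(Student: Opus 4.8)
The plan is to establish separately the two inclusions $\partial\varphi+M^*\circ(\partial\psi)\circ M\subset\partial(\varphi+\psi\circ M)$ and $\partial(\varphi+\psi\circ M)\subset\partial\varphi+M^*\circ(\partial\psi)\circ M$, only the second of which will use the qualification condition $0\in\sri(M(\dom\varphi)-\dom\psi)$. Observe first that, as in the discussion around \eqref{e:24juin2009}, this condition forces $\dom(\varphi+\psi\circ M)\neq\emp$, so that $\varphi+\psi\circ M\in\Gamma_0(\HH)$.

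\emph{The inclusion $\supset$.} This requires no qualification condition. Given $x\in\HH$, $u\in\partial\varphi(x)$, and $v\in\partial\psi(Mx)$, I would add the subgradient inequality \eqref{e:subdiff} for $\varphi$ at $x$ to the subgradient inequality for $\psi$ at $Mx$ evaluated along the points $My$, $y\in\HH$, after rewriting $\scal{My-Mx}{v}=\scal{y-x}{M^*v}$; this yields $\scal{y-x}{u+M^*v}+(\varphi+\psi\circ M)(x)\leq(\varphi+\psi\circ M)(y)$ for every $y\in\HH$, that is, $u+M^*v\in\partial(\varphi+\psi\circ M)(x)$.

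\emph{The inclusion $\subset$.} Fix $x_0\in\HH$ and $u\in\partial(\varphi+\psi\circ M)(x_0)$. By Fermat's rule \eqref{e:fermat} applied to $(\varphi+\psi\circ M)-\scal{\cdot}{u}$, the point $x_0$ minimizes $x\mapsto\varphi(x)-\scal{x}{u}+\psi(Mx)$ over $\HH$. I would then introduce the value function $P$ on $\GG$ given by $P(y)=\inf_{x\in\HH}\big(\varphi(x)-\scal{x}{u}+\psi(Mx+y)\big)$; it is convex, its domain is $\dom P=\dom\psi-M(\dom\varphi)$---so that the hypothesis amounts exactly to $0\in\sri(\dom P)$---and $P(0)=\varphi(x_0)-\scal{x_0}{u}+\psi(Mx_0)\in\RR$. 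The central claim is that $P$ is subdifferentiable at $0$; granting some $v\in\partial P(0)$, I would combine the bound $P(w-Mx)\leq\varphi(x)-\scal{x}{u}+\psi(w)$, valid for all $x\in\HH$ and $w\in\GG$ since the choice $y=w-Mx$ gives $Mx+y=w$, with the subgradient estimate $P(w-Mx)\geq P(0)+\scal{w-Mx}{v}$, and rearrange, using $\scal{Mx}{v}=\scal{x}{M^*v}$, to obtain $\varphi(x)-\scal{x}{u-M^*v}+\psi(w)-\scal{w}{v}\geq\varphi(x_0)-\scal{x_0}{u-M^*v}+\psi(Mx_0)-\scal{Mx_0}{v}$ for all $x\in\HH$ and $w\in\GG$. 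Taking $w=Mx_0$ and invoking \eqref{e:fermat} then gives $u-M^*v\in\partial\varphi(x_0)$, while taking $x=x_0$ gives $v\in\partial\psi(Mx_0)$; hence $u=(u-M^*v)+M^*v\in\partial\varphi(x_0)+M^*(\partial\psi(Mx_0))$, which is the desired inclusion.

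\emph{The main obstacle.} The one genuinely substantial point is the claim used above that the value function $P$ is subdifferentiable at $0$ under the sole hypothesis $0\in\sri(\dom P)$. This is where the strong relative interior is indispensable, and it is the step I expect to demand the most care---in particular because $P$, being an infimal projection, need not be lower semicontinuous, so one cannot merely invoke a ``continuity implies subdifferentiability'' result. I would prove it by a Hahn--Banach separation in $\GG\times\RR$: separate a point lying just below $(0,P(0))$ from $\operatorname{epi}P$, and then use that $0\in\sri(\dom P)$ makes $\cone(\dom P)=\spc(\dom P)$ a closed subspace---which both excludes a vertical separating functional and precludes a duality gap---so as to extract a continuous affine minorant of $P$ coinciding with it at $0$, i.e. an element of $\partial P(0)$. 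Equivalently, and most compactly, the identity can be derived by combining the Fenchel--Rockafellar/Attouch--Brezis conjugation formula $(\varphi+\psi\circ M)^*(u)=\min_{v\in\GG}\big(\varphi^*(u-M^*v)+\psi^*(v)\big)$---valid with attained minimum precisely under \eqref{e:1938}---with the Fenchel--Young equality characterization of the subdifferential. Everything else is routine manipulation of \eqref{e:subdiff} and \eqref{e:fermat}.
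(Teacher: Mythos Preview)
The paper does not supply its own proof of this lemma: it is quoted verbatim from \cite[Theorem~2.8.3]{Zali02} as a standing tool, so there is no in-paper argument to compare against. Your proposal is nonetheless a correct outline of the standard proof, and in fact mirrors the perturbation/value-function route used in Z\u{a}linescu's book: the easy inclusion is handled by adding subgradient inequalities, and the hard inclusion is obtained by showing that the marginal function $P(y)=\inf_{x}\big(\varphi(x)-\scal{x}{u}+\psi(Mx+y)\big)$ is subdifferentiable at $0$ under the strong relative interior hypothesis, then decoupling the resulting inequality.

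Your identification of the ``main obstacle'' is accurate. One refinement worth noting: the separation argument you sketch is carried out not in $\GG\times\RR$ directly but in $\spc(\dom P)\times\RR$, where the condition $0\in\sri(\dom P)$ becomes $0\in\core(\dom P|_{\spc(\dom P)})$; on this closed subspace the value function is bounded above near $0$ and hence continuous there, which is what permits the nonvertical separation without requiring lower semicontinuity of $P$. Your alternative route---deducing the subdifferential calculus rule from the conjugate formula $(\varphi+\psi\circ M)^*(u)=\min_{v\in\GG}\big(\varphi^*(u-M^*v)+\psi^*(v)\big)$ (the paper's Lemma~\ref{l:5}, itself cited from \cite[Corollary~2.8.5]{Zali02}) together with the Fenchel--Young equality---is equally valid and is in fact the cleanest way to close the argument within the paper's own toolkit.
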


\subsection{Moreau envelopes and proximity operators}
\label{sec:prox1}
Essential to this paper is the notion of a proximity operator, which 
is due to Moreau \cite{Mor62b} (see \cite{Smms05,More65} for detailed 
accounts and Section~\ref{sec:22} for closed-form examples).
The Moreau envelope of $\varphi$ is the continuous convex function
\begin{equation}
\label{e:moreau1}
\widetilde{\varphi}\colon\HH\to\RR\colon x\mapsto
\min_{y\in\HH}\:\varphi(y)+\frac12\|x-y\|^2.
\end{equation}
For every $x\in\HH$, the function $y\mapsto\varphi(y)+\|x-y\|^2/2$ 
admits a unique minimizer, which is denoted by $\prox_\varphi x$. The 
proximity operator of $\varphi$ is defined by
\begin{equation}
\label{e:moreau2}
\prox_{\varphi}\colon\HH\to\HH\colon x\mapsto
\underset{y\in\HH}{\operatorname{argmin}}\:\varphi(y)+\frac12\|x-y\|^2
\end{equation}
and characterized by 
\begin{equation}
\label{e:prox1}
(\forall (x,p)\in\HH\times\HH)\quad p=\prox_\varphi
x\quad\Leftrightarrow\quad x-p\in\partial\varphi(p).
\end{equation}

\begin{lemma}{\rm\cite{More65}}
\label{l:6}
Let $\varphi\in\Gamma_0(\HH)$. Then the following hold.
\begin{enumerate}
\item
\label{l:6ii-}
$(\forall x\in\HH)(\forall y\in\HH)$ 
$\|\prox_\varphi x-\prox_\varphi y\|^2\leq
\scal{x-y}{\prox_\varphi x-\prox_\varphi y}$.
\item
\label{l:6ii}
$(\forall x\in\HH)(\forall y\in\HH)$ 
$\|\prox_\varphi x-\prox_\varphi y\|\leq\|x-y\|$.
\item
\label{l:6i}
$\widetilde{\varphi}+\widetilde{\varphi^*}=\|\cdot\|^2/2$.
\item
\label{l:6iii}
$\widetilde{\varphi^*}$ is Fr\'echet differentiable and 
$\nabla\widetilde{\varphi^*}=\prox_{\varphi}=\Id-\prox_{\varphi^*}$.
\end{enumerate}
\end{lemma}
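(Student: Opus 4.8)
The plan is to derive all four assertions from the characterization \eqref{e:prox1} of $\prox_\varphi$ and the conjugation rule \eqref{e:terminal2E}. The first preliminary step is to record the \emph{Moreau identity} $\prox_\varphi+\prox_{\varphi^*}=\Id$: given $x\in\HH$ and $p=\prox_\varphi x$, \eqref{e:prox1} gives $x-p\in\partial\varphi(p)$, hence $p\in\partial\varphi^*(x-p)$ by \eqref{e:terminal2E}, which is exactly the statement $x-p=\prox_{\varphi^*}x$. The second preliminary step is the Fenchel--Young equality $\varphi(p)+\varphi^*(x-p)=\scal{p}{x-p}$, which follows from \eqref{e:conjugate} and \eqref{e:subdiff} once $x-p\in\partial\varphi(p)$. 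Both facts will be used repeatedly.

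For \ref{l:6ii-} and \ref{l:6ii}, fix $x,y\in\HH$ and set $p=\prox_\varphi x$, $q=\prox_\varphi y$. By \eqref{e:prox1}, $x-p\in\partial\varphi(p)$ and $y-q\in\partial\varphi(q)$; writing the subgradient inequality \eqref{e:subdiff} for the first pair evaluated at $q$ and for the second pair evaluated at $p$, and adding, gives $\scal{p-q}{(x-p)-(y-q)}\ge 0$, i.e.\ $\|p-q\|^2\le\scal{x-y}{p-q}$, which is \ref{l:6ii-}. Cauchy--Schwarz then yields $\|p-q\|^2\le\|x-y\|\,\|p-q\|$, hence \ref{l:6ii} (the case $p=q$ being trivial).

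For \ref{l:6i}, fix $x\in\HH$, put $p=\prox_\varphi x$, and note $x-p=\prox_{\varphi^*}x$ by the Moreau identity. Evaluating the Moreau envelopes at their respective minimizers gives $\widetilde\varphi(x)=\varphi(p)+\tfrac12\|x-p\|^2$ and $\widetilde{\varphi^*}(x)=\varphi^*(x-p)+\tfrac12\|p\|^2$. Adding these, substituting the Fenchel--Young equality $\varphi(p)+\varphi^*(x-p)=\scal{p}{x-p}$, and using $\tfrac12\|p\|^2+\scal{p}{x-p}+\tfrac12\|x-p\|^2=\tfrac12\|p+(x-p)\|^2=\tfrac12\|x\|^2$ yields $\widetilde\varphi(x)+\widetilde{\varphi^*}(x)=\tfrac12\|x\|^2$.

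For \ref{l:6iii}, the identity $\prox_\varphi=\Id-\prox_{\varphi^*}$ is again the Moreau identity, so only the Fr\'echet differentiability of $\widetilde{\varphi^*}$ with gradient $\prox_\varphi$ remains. The key estimate comes from using, for arbitrary $x,y\in\HH$, the point $v:=\prox_{\varphi^*}x$ as a (non-optimal) competitor in the definition \eqref{e:moreau1} of $\widetilde{\varphi^*}(y)$: subtracting $\widetilde{\varphi^*}(x)=\varphi^*(v)+\tfrac12\|x-v\|^2$, expanding the squares, and using $x-v=\prox_\varphi x$ gives
\[
\widetilde{\varphi^*}(y)-\widetilde{\varphi^*}(x)\le\scal{y-x}{\prox_\varphi x}+\tfrac12\|y-x\|^2 .
\]
Interchanging $x$ and $y$ produces the same bound with $\prox_\varphi y$ in place of $\prox_\varphi x$; writing $\scal{y-x}{\prox_\varphi y}=\scal{y-x}{\prox_\varphi x}+\scal{y-x}{\prox_\varphi y-\prox_\varphi x}$ and controlling the last term by $\|y-x\|^2$ via \ref{l:6ii} gives a matching lower bound. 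Together they yield $|\widetilde{\varphi^*}(y)-\widetilde{\varphi^*}(x)-\scal{y-x}{\prox_\varphi x}|\le\tfrac32\|y-x\|^2$, which is $o(\|y-x\|)$ as $y\to x$, so $\widetilde{\varphi^*}$ is Fr\'echet differentiable at $x$ with $\nabla\widetilde{\varphi^*}(x)=\prox_\varphi x$. I expect this last point to be the main obstacle: establishing Fr\'echet (not merely G\^ateaux) differentiability; the symmetric two-sided quadratic bound above, fueled by the nonexpansiveness \ref{l:6ii}, is precisely what closes that gap cleanly.
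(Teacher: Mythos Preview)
Your proof is correct and self-contained. Note, however, that the paper does not supply its own proof of this lemma: it is stated with a direct citation to Moreau~\cite{More65}, so there is nothing in the paper to compare your argument against. Your derivation---establishing the Moreau identity and Fenchel--Young equality from \eqref{e:prox1} and \eqref{e:terminal2E}, then obtaining firm nonexpansiveness from monotonicity of $\partial\varphi$, the envelope identity from the Fenchel--Young equality, and Fr\'echet differentiability via the two-sided quadratic bound---is a clean and standard route, essentially the one Moreau used.
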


The identity $\prox_{\varphi}=\Id-\prox_{\varphi^*}$ can be stated
in a slightly extended context.
\begin{lemma}{\rm\cite[Lemma~2.10]{Smms05}}
\label{l:7}
Let $\varphi\in\Gamma_0(\HH)$, let $x\in\HH$, and let $\gamma\in\RPP$. 
Then $x=\prox_{\gamma\varphi}x+
\gamma\prox_{\gamma^{-1}\varphi^*}(\gamma^{-1}x)$.
\end{lemma}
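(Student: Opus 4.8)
The plan is to reduce the claimed identity to the already-established case $\gamma=1$ of Lemma~\ref{l:6}\ref{l:6iii} by a scaling argument. The first step is to rewrite the proximity operator of $\gamma\varphi$ in terms of a proximity operator at parameter $1$ of a rescaled function. Specifically, for $\gamma\in\RPP$ and $x\in\HH$, substituting $y=\gamma^{-1}u$ in \eqref{e:moreau2} and using that $\gamma^2>0$ does not affect the argmin gives a relation between $\prox_{\gamma\varphi}x$ and a minimization over $u$; the cleanest route, however, is to go through the conjugate directly rather than juggle the primal minimization.

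The key step is to compute $(\gamma\varphi)^*$ and relate $\widetilde{(\gamma\varphi)^*}$ to $\widetilde{\varphi^*}$. From \eqref{e:conjugate} one checks that $(\gamma\varphi)^*(u)=\gamma\varphi^*(\gamma^{-1}u)$. Then, by Lemma~\ref{l:6}\ref{l:6iii} applied to the function $\gamma\varphi\in\Gamma_0(\HH)$, one has $\Id-\prox_{\gamma\varphi}=\prox_{(\gamma\varphi)^*}=\nabla\widetilde{(\gamma\varphi)^*}$. It remains to identify $\prox_{(\gamma\varphi)^*}x$ with $\gamma\prox_{\gamma^{-1}\varphi^*}(\gamma^{-1}x)$. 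Writing $\psi=\varphi^*$, this amounts to the elementary scaling identity
\begin{equation*}
\prox_{\,u\mapsto\gamma\psi(\gamma^{-1}u)}\,x=\gamma\,\prox_{\gamma^{-1}\psi}\big(\gamma^{-1}x\big),
\end{equation*}
which follows by expanding the defining minimization: $\prox_{u\mapsto\gamma\psi(\gamma^{-1}u)}x=\operatorname{argmin}_{u}\big(\gamma\psi(\gamma^{-1}u)+\tfrac12\|x-u\|^2\big)$, and after the change of variable $u=\gamma v$ the objective becomes $\gamma\psi(v)+\tfrac{\gamma^2}{2}\|\gamma^{-1}x-v\|^2=\gamma\big(\psi(v)+\tfrac{\gamma}{2}\|\gamma^{-1}x-v\|^2\big)$, whose minimizer in $v$ is $\prox_{\gamma^{-1}\psi}(\gamma^{-1}x)$; multiplying back by $\gamma$ yields the claim.

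Combining these, $\prox_{(\gamma\varphi)^*}x=\gamma\prox_{\gamma^{-1}\varphi^*}(\gamma^{-1}x)$, and substituting into $x=\prox_{\gamma\varphi}x+\prox_{(\gamma\varphi)^*}x$ gives the stated formula. The only mild obstacle is bookkeeping with the scaling factors in the two changes of variable (one in the conjugate, one in the proximal minimization) and making sure the $\gamma$ versus $\gamma^{-1}$ placements are consistent; everything else is a direct application of Lemma~\ref{l:6}\ref{l:6iii} and the definitions. Alternatively, one could bypass conjugates entirely and argue from the characterization \eqref{e:prox1}: set $p=\prox_{\gamma\varphi}x$ and $q=\gamma\prox_{\gamma^{-1}\varphi^*}(\gamma^{-1}x)$, then show $x-p\in\partial(\gamma\varphi)(p)$ and $x-q=p$ are compatible via \eqref{e:terminal2E}, but the conjugate-scaling route above is the most economical.
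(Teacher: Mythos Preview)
Your argument is correct. The paper does not supply its own proof of this lemma; it merely cites \cite[Lemma~2.10]{Smms05}. Your reduction to the case $\gamma=1$ (Lemma~\ref{l:6}\ref{l:6iii}) via the conjugate-scaling identity $(\gamma\varphi)^*=\gamma\varphi^*(\gamma^{-1}\cdot)$ and the change of variable $u=\gamma v$ in the proximal minimization is the standard way to obtain this result, and all the bookkeeping with the $\gamma$ factors checks out.
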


The following fact will also be required.
\begin{lemma}
\label{l:2}
Let $\psi\in\Gamma_0(\HH)$, let $w\in\HH$, and set
$\varphi\colon x\mapsto\psi(x)+\|x-w\|^2/2$. Then
$\varphi^*\colon u\mapsto\widetilde{\psi^*}(u+w)-\|w\|^2/2$. 
\end{lemma}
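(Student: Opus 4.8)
The plan is to compute $\varphi^*$ directly from the definition of the conjugate and then recognize the resulting expression as a Moreau envelope of $\psi^*$. First I would write, for every $u\in\HH$,
\[
\varphi^*(u)=\sup_{x\in\HH}\Big(\scal{x}{u}-\psi(x)-\tfrac12\|x-w\|^2\Big)
=\sup_{x\in\HH}\Big(\scal{x}{u}-\psi(x)-\tfrac12\|x\|^2+\scal{x}{w}-\tfrac12\|w\|^2\Big).
\]
Pulling the constant $-\|w\|^2/2$ out of the supremum and grouping the linear terms $\scal{x}{u}+\scal{x}{w}=\scal{x}{u+w}$, this becomes
\[
\varphi^*(u)=-\tfrac12\|w\|^2+\sup_{x\in\HH}\Big(\scal{x}{u+w}-\psi(x)-\tfrac12\|x\|^2\Big).
\]

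Next I would identify the remaining supremum. Writing $\tfrac12\|x\|^2=\tfrac12\|x-(u+w)\|^2+\scal{x}{u+w}-\tfrac12\|u+w\|^2$ and substituting, the two copies of $\scal{x}{u+w}$ cancel, leaving
\[
\sup_{x\in\HH}\Big(\scal{x}{u+w}-\psi(x)-\tfrac12\|x\|^2\Big)
=\tfrac12\|u+w\|^2+\sup_{x\in\HH}\Big(-\psi(x)-\tfrac12\|x-(u+w)\|^2\Big)
=\tfrac12\|u+w\|^2-\widetilde{\psi}(u+w).
\]
Then, invoking Lemma~\ref{l:6}\ref{l:6i}, which gives $\widetilde{\psi}(u+w)+\widetilde{\psi^*}(u+w)=\|u+w\|^2/2$, the quantity $\tfrac12\|u+w\|^2-\widetilde{\psi}(u+w)$ is exactly $\widetilde{\psi^*}(u+w)$. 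Combining with the displayed expression above yields $\varphi^*(u)=\widetilde{\psi^*}(u+w)-\|w\|^2/2$, as claimed.

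A minor point worth noting for completeness is that $\varphi\in\Gamma_0(\HH)$: it is convex and lower semicontinuous as the sum of $\psi\in\Gamma_0(\HH)$ and the continuous convex function $x\mapsto\|x-w\|^2/2$, and it is proper since $\dom\varphi=\dom\psi\neq\emp$; hence $\varphi^*$ is well defined and the manipulations above are justified. I do not anticipate any real obstacle here — the argument is a routine completion of the square in the definition of the conjugate, the only substantive ingredient being the Moreau identity of Lemma~\ref{l:6}\ref{l:6i} used to rewrite the envelope of $\psi$ as the envelope of $\psi^*$. Alternatively, one could avoid Lemma~\ref{l:6}\ref{l:6i} altogether by recognizing $\sup_{x}(\scal{x}{u+w}-\psi(x)-\|x\|^2/2)$ as the conjugate at $u+w$ of $\psi+\|\cdot\|^2/2$ and computing it via the inf-convolution formula $(\psi+\|\cdot\|^2/2)^*=\psi^*\,\square\,(\|\cdot\|^2/2)=\widetilde{\psi^*}$, but the envelope-based route above is the most direct.
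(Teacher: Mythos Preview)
Your proof is correct and follows essentially the same route as the paper's: compute $\varphi^*$ from the definition, complete the square to recognize $\widetilde{\psi}(u+w)$, and then invoke Lemma~\ref{l:6}\ref{l:6i} to pass to $\widetilde{\psi^*}(u+w)$. The paper's argument is slightly more compact (it completes the square in one step rather than expanding and regrouping), but the ideas and the key ingredient are identical.
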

\begin{proof}
Let $u\in\HH$. It follows from \eqref{e:conjugate} and
Lemma~\ref{l:6}\ref{l:6i} that
\begin{align}
\varphi^*(u)
&=-\inf_{x\in\HH}\psi(x)+\frac12\|x-w\|^2-\scal{x}{u}\nonumber\\ 
&=\frac12\|u\|^2+\scal{w}{u}-
\inf_{x\in\HH}\psi(x)+\frac12\|x-(w+u)\|^2\nonumber\\ 
&=\frac12\|u+w\|^2-\frac12\|w\|^2-\widetilde{\psi}(u+w)\nonumber\\
&=\widetilde{\psi^*}(u+w)-\frac12\|w\|^2, 
\end{align}
which yields the desired identity.
\end{proof}

\subsection{Examples of proximity operators}
\label{sec:22}

To solve Problem~\ref{prob:1}, our algorithm will use (approximate) 
evaluations of the proximity operators of the functions $f$ and $g^*$ 
(or, equivalently, of $g$ by Lemma~\ref{l:6}\ref{l:6iii}).
In this section, we supply examples of proximity operators which admit
closed-form expressions.

\begin{example}
\label{ex:1938}
Let $C$ be a nonempty closed convex subset of $\HH$. Then 
the following hold.
\begin{enumerate}
\item
\label{ex:1938i}
Set $\varphi=\iota_C$. Then $\prox_\varphi=P_C$
{\rm\cite[Example~3.d]{More65}}.
\item
\label{ex:1938ii}
Set $\varphi=\sigma_C$. Then $\prox_\varphi=\Id-P_C$
{\rm\cite[Example~2.17]{Smms05}}.
\item
Set $\varphi=d_C^2/(2\alpha)$. Then $(\forall x\in\HH)$ 
$\prox_\varphi x=x+(1+\alpha)^{-1}(P_Cx-x)$ 
{\rm\cite[Example~2.14]{Smms05}}.
\item
Set $\varphi=(\|\cdot\|^2-d_C^2)/(2\alpha)$. Then $(\forall x\in\HH)$ 
$\prox_\varphi x=x-\alpha^{-1}P_C(\alpha(\alpha+1)^{-1}x)$ 
{\rm\cite[Lemma~2.7]{Smms05}}.
\end{enumerate}
\end{example}

\begin{example}{\rm\cite[Lemma~2.7]{Smms05}}
\label{ex:1933}
Let $\psi\in\Gamma_0(\HH)$ and set
$\varphi=\|\cdot\|^2/2-\widetilde{\psi}$. Then $\varphi\in\Gamma_0(\HH)$
and $(\forall x\in\HH)$ $\prox_\varphi x=x-\prox_{\psi/2}(x/2)$.
\end{example}

\begin{example}{\rm\cite[Proposition~11]{Jsts07}} 
\label{ex:1931}
Let $\GG$ be a real Hilbert space, let $\psi\in\Gamma_0(\GG)$, let 
$M\in\BL(\HH,\GG)$, and set $\varphi=\psi\circ M$. Suppose that 
$M\circ M^*=\kappa\Id$, for some $\kappa\in\RPP$. Then 
$\varphi\in\Gamma_0(\HH)$ and
\begin{equation}
\label{e:pfL}
\prox_{\varphi}=
\Id+\frac{1}{\kappa}M^*\circ(\prox_{\kappa\psi}-\Id)\circ M.
\end{equation}
\end{example}

\begin{example}{\rm\cite[Proposition~2.10 and Remark~3.2(ii)]{Chau07}}
\label{ex:chaux07}
Set 
\begin{equation}
\varphi\colon\HH\to\RX\colon x\mapsto\sum_{k\in\KK}\phi_k(\scal{x}{o_k}), 
\end{equation}
where:
\begin{enumerate}
\item
\label{p:quezoni}
$\emp\neq\KK\subset\NN$;
\item
\label{p:quezonii}
$(o_k)_{k\in\KK}$ is an orthonormal basis of $\HH$; 
\item
\label{p:quezoniii}
$(\phi_k)_{k\in\KK}$ are functions in $\Gamma_0(\RR)$;
\item
\label{p:quezoniv}
Either $\KK$ is finite, or there exists a subset $\LL$ of $\KK$ 
such that:
\begin{enumerate}
\item
\label{p:quezoniva}
$\KK\smallsetminus \LL$ is finite;
\item
\label{p:quezonivb}
$(\forall k\in\LL)$ $\phi_k\geq \phi_k(0)=0$.
\end{enumerate}
\end{enumerate}
Then $\varphi\in\Gamma_0(\HH)$ and
\begin{equation}
(\forall x\in\HH)\quad\prox_{\varphi}x=
\sum_{k\in\KK}\big(\prox_{\phi_k}\scal{x}{o_k}\big)o_k.
\end{equation}
\end{example}

\begin{example}
{\rm\cite[Proposition~2.1]{Luis09}}
\label{ex:10}
Let $C$ be a nonempty closed convex subset of $\HH$, let
$\phi\in\Gamma_0(\RR)$ be even, and set $\varphi=\phi\circ d_C$. 
Then $\varphi\in\Gamma_0(\HH)$. Moreover, $\prox_{\varphi}=P_C$ 
if $\phi=\iota_{\{0\}}+\eta$ for some $\eta\in\RR$ and, otherwise,
\begin{equation}
\label{e:26juin2009}
(\forall x\in\HH)\quad\prox_{\varphi}x=
\begin{cases}
x+\displaystyle{\frac{\prox_{\phi^*}d_C(x)}{d_C(x)}}
(P_Cx-x),&\text{if}\;\;d_C(x)>\max\partial\phi(0);\\
P_Cx,&\text{if}\;\;x\notin C\;\text{and}\;\;d_C(x)
\leq\max\partial\phi(0);\\
x,&\text{if}\;\;x\in C.
\end{cases}
\end{equation}
\end{example}

\begin{remark}
\label{r:34}
Taking $C=\{0\}$ and $\phi\neq\iota_{\{0\}}+\eta$ ($\eta\in\RR$)
in Example~\ref{ex:10} yields the proximity operator of 
$\phi\circ\|\cdot\|$, namely
(using Lemma~\ref{l:6}\ref{l:6iii})
\begin{equation}
\label{e:8janvier2009}
(\forall x\in\HH)\quad\prox_{\varphi}x=
\begin{cases}
\displaystyle{\frac{\prox_{\phi}\|x\|}{\|x\|}}x,
&\text{if}\;\;\|x\|>\max\partial\phi(0);\\
0,&\text{if}\;\;\|x\|\leq\max\partial\phi(0).
\end{cases}
\end{equation}
On the other hand, if $\phi$ is differentiable at $0$ in 
Example~\ref{ex:10}, then $\partial\phi(0)=\{0\}$ and 
\eqref{e:26juin2009} yields
\begin{equation}
\label{e:27juin2009}
(\forall x\in\HH)\quad\prox_{\varphi}x=
\begin{cases}
x+\displaystyle{\frac{\prox_{\phi^*}d_C(x)}{d_C(x)}}
(P_Cx-x),&\text{if}\;\;x\notin C;\\
x,&\text{if}\;\;x\in C.
\end{cases}
\end{equation}
\end{remark}

\begin{example}
{\rm\cite[Proposition~2.2]{Luis09}}
\label{ex:10*}
Let $C$ be a nonempty closed convex subset of $\HH$, let
$\phi\in\Gamma_0(\RR)$ be even and nonconstant,
and set $\varphi=\sigma_C+\phi\circ\|\cdot\|$. Then
$\varphi\in\Gamma_0(\HH)$ and 
\begin{equation}
\label{e:2janvier2009}
(\forall x\in\HH)\quad\prox_{\varphi}x=
\begin{cases}
\displaystyle{\frac{\prox_{\phi}d_C(x)}{d_C(x)}}
(x-P_Cx),&\text{if}\;\;d_C(x)>\max\operatorname{Argmin}\phi;\\
x-P_Cx,&\text{if}\;\;x\notin C\;\text{and}\;\;
d_C(x)\leq\max\operatorname{Argmin}\phi;\\
0,&\text{if}\;\;x\in C.
\end{cases}
\end{equation}
\end{example}

\begin{example}
\label{ex:hue2009-01-09}
Let $A\in\BL(\HH)$ be positive and self-adjoint, let $b\in\HH$, 
let $\alpha\in\RR$, and set $\varphi\colon x\mapsto
\scal{Ax}{x}/2+\scal{x}{b}+\alpha$. Then $\varphi\in\Gamma_0(\HH)$ 
and $(\forall x\in\HH)$ $\prox_{\varphi}x=(\Id+A)^{-1}(x-b)$.
\end{example}
\begin{proof}
It is clear that $\varphi$ is a finite-valued continuous convex function.
Now fix $x\in\HH$ and set 
$\psi\colon y\mapsto\|x-y\|^2/2+\scal{Ay}{y}/2+\scal{y}{b}+\alpha$.
Then $\nabla\psi\colon y\mapsto y-x+Ay+b$.
Hence, $(\forall y\in\HH)$ $\nabla\psi(y)=0$ $\Leftrightarrow$
$y=(\Id+A)^{-1}(x-b)$.
\end{proof}

\begin{example}
\label{ex:hanoi2009-01-06}
For every $i\in\{1,\ldots,m\}$, let $(\GG_i,\|\cdot\|)$ be a real 
Hilbert space, let $r_i\in\GG_i$, let $T_i\in\BL(\HH,\GG_i)$, and let
$\alpha_i\in\RPP$. Set $(\forall x\in\HH)$
$\varphi(x)=(1/2)\sum_{i=1}^m\alpha_i\|T_ix-r_i\|^2$.
Then $\varphi\in\Gamma_0(\HH)$ and 
\begin{equation}
\label{e:hanoi2009-01-06}
(\forall x\in\HH)\quad\prox_{\varphi}x=
\bigg(\Id+\sum_{i=1}^m\alpha_iT_i^*T_i\bigg)^{-1}
\bigg(x+\sum_{i=1}^m\alpha_iT_i^*r_i\bigg).
\end{equation}
\end{example}
\begin{proof}
We have $\varphi\colon x\mapsto\sum_{i=1}^m\alpha_i
\scal{T_ix-r_i}{T_ix-r_i}/2=\scal{Ax}{x}/2+\scal{x}{b}+\alpha$,
where $A=\sum_{i=1}^m\alpha_iT_i^*T_i$, 
$b=-\sum_{i=1}^m\alpha_iT_i^*r_i$, 
and $\alpha=\sum_{i=1}^m\alpha_i\|r_i\|^2/2$. 
Hence, \eqref{e:hanoi2009-01-06} 
follows from Example~\ref{ex:hue2009-01-09}.
\end{proof}

As seen in Example~\ref{ex:chaux07}, Example~\ref{ex:10},
Remark~\ref{r:34}, and 
Example~\ref{ex:10*}, some important proximity operators can be 
decomposed in terms of those of functions in $\Gamma_0(\RR)$. 
Here are explicit expressions for the proximity operators of
such functions.

\begin{example} {\rm \cite[Examples~4.2 and~4.4]{Chau07}}
\label{ex:phi}
Let $p\in\left[1,\pinf\right[$, let $\alpha\in\RPP$, let 
$\phi\colon\RR\to\RR\colon\eta\mapsto\alpha|\eta|^{p}$,
let $\xi\in\RR$, and set $\pi=\prox_\phi\xi$. 
Then the following hold.
\begin{enumerate}
\item
\label{ex:phii}
$\pi=\operatorname{sign}(\xi)\max\{|\xi|-\alpha,0\}$, if $p=1$;
\item
\label{ex:phiii}
$\pi=\xi+\Frac{4\alpha}{3\cdot 2^{1/3}}
\Big(|\rho-\xi|^{1/3}-|\rho+\xi|^{1/3}\Big)$,
where $\rho=\sqrt{\xi^2+256\alpha^3/729}$, if $p=4/3$;
\item
\label{ex:phiiii}
$\pi=\xi+9\alpha^2\operatorname{sign}(\xi)
\big(1-\sqrt{1+16|\xi|/(9\alpha^2)}\,\big)/8$,
if $p=3/2$;
\item
\label{ex:phiiv}
$\pi=\xi/(1+2\alpha)$, if $p=2$;
\item
\label{ex:phiv}
$\pi=\operatorname{sign}(\xi)\big(\sqrt{1+12\alpha|\xi|}-1\big)
/(6\alpha)$, if $p=3$;
\item
\label{ex:phivi}
$\pi=\bigg|\Frac{\rho+\xi}{8\alpha}\bigg|^{1/3}-
\bigg|\Frac{\rho-\xi}{8\alpha}\bigg|^{1/3}$,
where $\rho=\sqrt{\xi^2+1/(27\alpha)}$, if $p=4$.
\end{enumerate}
\end{example}

\begin{example}{\rm\cite[Example~2.18]{Smms05}}
\label{ex:36}
Let $\alpha\in\RPP$ and set 
\begin{equation}
\phi\colon\xi\mapsto
\begin{cases}
-\alpha\ln(\xi),&\text{if}\;\;\xi>0;\\
\pinf,&\text{if}\;\;\xi\leq 0.
\end{cases}
\end{equation}
Then $(\forall\xi\in\RR)$ 
$\prox_{\phi}\xi=(\xi+\sqrt{\xi^2+4\alpha})/2$.
\end{example}

\begin{example}{\rm\cite[Example~3.5]{Siop07}}
\label{ex:7}
Let $\omega\in\RPP$ and set 
\begin{equation}
\label{e:td}
\phi\colon\RR\to\RX\colon\xi\mapsto 
\begin{cases}
\ln(\omega)-\ln(\omega-|\xi|),
&\text{if}\;\;|\xi|<\omega;\\
\pinf, &\text{otherwise.}
\end{cases}
\end{equation}
Then
\begin{equation}
(\forall\xi\in\RR)\quad\prox_\phi\xi=
\begin{cases}
\sign(\xi)\,\displaystyle\frac{|\xi|+\omega-
\sqrt{\big||\xi|-\omega\big|^2+4}}{2}, 
& \text{if}\;\;|\xi|>1/\omega;\\[2mm]
0 & \text{otherwise}.
\end{cases}
\end{equation}
\end{example}

\begin{example}{\rm\cite[Example~4.5]{Chau07}}
\label{ex:8}
Let $\omega\in\RPP$, $\tau\in\RPP$, and set 
\begin{equation}
\label{e:fhd}
\phi\colon\RR\to\RX\colon\xi\mapsto \begin{cases}
\tau\xi^2, & \text{if}\;\;|\xi|\leq\omega/\sqrt{2\tau};\\[3mm]
\omega\sqrt{2\tau}|\xi|-
{\omega^2}/{2}, & \text{otherwise}.
\end{cases}
\end{equation}
Then
\begin{equation}
(\forall\xi\in\RR)\quad\prox_\phi\xi=
\begin{cases}
\Frac{\xi}{2\tau+1}, 
&\text{if}\;\;|\xi|\leq\omega(2\tau+1)/\sqrt{2\tau};\\[3mm]
\xi-\omega\sqrt{2\tau}\sign(\xi), 
&\text{if}\;\;|\xi|>\omega(2\tau+1)/\sqrt{2\tau}.
\end{cases}
\end{equation}
\end{example}

Further examples can be constructed via the following rules.

\begin{lemma}{\rm\cite[Proposition~3.6]{Siop07}}
\label{l:santiago06-1}
Let $\phi=\psi+\sigma_\Omega$, where $\psi\in\Gamma_0(\RR)$ and 
$\Omega\subset\RR$ is a nonempty closed interval. Suppose that $\psi$ is 
differentiable at $0$ with $\psi'(0)=0$. Then 
$\prox_\phi=\prox_\psi\circ\,\soft{\Omega}$, where
\begin{equation}
\label{e:soft4}
\soft{\Omega}\colon\RR\to\RR\colon\xi\mapsto
\begin{cases}
\xi-\underline{\omega},&\text{if}\;\;\xi<\underline{\omega};\\
0,&\text{if}\;\;\xi\in\Omega;\\
\xi-\overline{\omega},&\text{if}\;\;\xi>\overline{\omega},
\end{cases}
\quad
\qquad\text{with}\quad
\begin{cases}
\underline{\omega}=\inf\Omega,\\
\overline{\omega}=\sup\Omega.
\end{cases}
\end{equation}
\end{lemma}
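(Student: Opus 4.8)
The plan is to reduce the identity to the subdifferential characterization \eqref{e:prox1} of proximity operators, after first recognizing the operator $\soft{\Omega}$ defined in \eqref{e:soft4} as a proximity operator in its own right. Since $\Omega$ is a nonempty closed convex subset of $\RR$, Example~\ref{ex:1938}\ref{ex:1938ii} gives $\prox_{\sigma_\Omega}=\Id-P_\Omega$, while the projection onto the interval $\Omega$ is the clamping map $P_\Omega\colon\xi\mapsto\min\{\overline{\omega},\max\{\underline{\omega},\xi\}\}$; subtracting the latter from the identity reproduces exactly the right-hand side of \eqref{e:soft4}. Hence $\soft{\Omega}=\prox_{\sigma_\Omega}$, and the assertion becomes $\prox_{\psi+\sigma_\Omega}=\prox_\psi\circ\prox_{\sigma_\Omega}$.

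Before the main step I would record a few preliminary facts. The function $\phi=\psi+\sigma_\Omega$ belongs to $\Gamma_0(\RR)$: both summands are lower semicontinuous and convex, and $0\in\dom\psi\cap\dom\sigma_\Omega$ (the former because $\psi$ is differentiable at $0$), so $\phi$ is proper and $\prox_\phi$ is well defined. Next, differentiability of $\psi$ at $0$ with $\psi'(0)=0$ gives, by \eqref{e:moreau23}, $\partial\psi(0)=\{0\}$, so $0=0-0\in\partial\psi(0)$ and \eqref{e:prox1} yields $\prox_\psi 0=0$; moreover $0\in\inte\dom\psi$. Since $\prox_\psi$ is monotone by Lemma~\ref{l:6}\ref{l:6ii-} (in dimension one it is nondecreasing), it follows that $\xi\geq 0\Rightarrow\prox_\psi\xi\geq 0$ and $\xi\leq 0\Rightarrow\prox_\psi\xi\leq 0$. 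Because $0\in\inte\dom\psi\subset\inte(\dom\psi-\dom\sigma_\Omega)$, the qualification condition of Lemma~\ref{l:4} (applied with $M=\Id$, the role of $\psi$ there played by $\sigma_\Omega$) holds, whence $\partial\phi=\partial\psi+\partial\sigma_\Omega$. Finally, an elementary computation (or \eqref{e:terminal2E} together with $\sigma_\Omega^*=\iota_\Omega$) shows that $\partial\sigma_\Omega(t)=\{\overline{\omega}\}$ for $t>0$, $\partial\sigma_\Omega(t)=\{\underline{\omega}\}$ for $t<0$, and $\partial\sigma_\Omega(0)=\Omega$.

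With these in hand, fix $\xi\in\RR$, set $q=\soft{\Omega}\xi=\prox_{\sigma_\Omega}\xi$ and $p=\prox_\psi q$, and aim to show $p=\prox_\phi\xi$, i.e. $\xi-p\in\partial\phi(p)=\partial\psi(p)+\partial\sigma_\Omega(p)$. Decompose $\xi-p=(q-p)+(\xi-q)$; by \eqref{e:prox1} applied to $p=\prox_\psi q$ one has $q-p\in\partial\psi(p)$, so it suffices to prove $\xi-q\in\partial\sigma_\Omega(p)$. This is the crux, and it is settled by a three-way case analysis on the position of $\xi$ relative to $\Omega=[\underline{\omega},\overline{\omega}]$. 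If $\xi\in\Omega$, then $q=0$, hence $p=\prox_\psi 0=0$, and $\xi-q=\xi\in\Omega=\partial\sigma_\Omega(0)=\partial\sigma_\Omega(p)$. If $\xi>\overline{\omega}$, then $q=\xi-\overline{\omega}>0$, so $p=\prox_\psi q\geq 0$, and $\xi-q=\overline{\omega}$ lies in $\{\overline{\omega}\}=\partial\sigma_\Omega(p)$ when $p>0$ and in $\Omega=\partial\sigma_\Omega(0)$ when $p=0$. The case $\xi<\underline{\omega}$ is symmetric. In every case $\xi-q\in\partial\sigma_\Omega(p)$, hence $\xi-p\in\partial\psi(p)+\partial\sigma_\Omega(p)=\partial\phi(p)$, i.e. $p=\prox_\phi\xi$; as $\xi$ was arbitrary, $\prox_\phi=\prox_\psi\circ\soft{\Omega}$.

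The hard part is exactly this last case analysis: a priori $\xi-q$ is only known to be a subgradient of $\sigma_\Omega$ at $q$, and one must transfer it to the possibly different point $p$. What makes this possible is that $p$ and $q$ lie on the same side of the kink of $\sigma_\Omega$ at $0$ (or are both $0$), and this is precisely what the hypothesis $\psi'(0)=0$ delivers, through $\prox_\psi 0=0$ and the monotonicity of $\prox_\psi$; without it the identity fails.
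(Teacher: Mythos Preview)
The paper does not prove this lemma; it is quoted verbatim from \cite[Proposition~3.6]{Siop07} and stated without argument. Your proof is correct and self-contained: you correctly identify $\soft{\Omega}=\prox_{\sigma_\Omega}$, you justify the subdifferential sum rule $\partial\phi=\partial\psi+\partial\sigma_\Omega$ via Lemma~\ref{l:4} (the qualification $0\in\inte(\dom\psi-\dom\sigma_\Omega)$ holding because $\psi$ is differentiable at $0$ and $0\in\dom\sigma_\Omega$), and the case analysis showing $\xi-q\in\partial\sigma_\Omega(p)$ is sound. The key observation---that $\prox_\psi$ preserves sign because $\prox_\psi 0=0$ and $\prox_\psi$ is nondecreasing on $\RR$---is exactly what allows the subgradient $\xi-q$ of $\sigma_\Omega$ at $q$ to transfer to the point $p$, and you isolate this cleanly. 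So there is nothing to compare against in the present paper; your argument stands on its own and is essentially the natural route (and, for what it is worth, matches the spirit of the original proof in \cite{Siop07}).
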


\begin{lemma}{\rm\cite[Proposition~12(ii)]{Jsts07}}
\label{l:3}
Let $\phi=\iota_C+\psi$, where $\psi\in\Gamma_0(\RR)$ and where $C$ is 
a closed interval in $\RR$ such that $C\cap\dom\psi\neq\emp$. Then 
$\prox_{\iota_C+\psi}=P_C\circ\prox_\psi$.
\end{lemma}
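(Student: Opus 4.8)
The plan is to work directly from the definition of the proximity operator and reduce the claim to an elementary one-dimensional monotonicity argument, avoiding subdifferential calculus altogether. Fix $\xi\in\RR$ and set $\Psi\colon y\mapsto\psi(y)+\tfrac12|y-\xi|^2$. Then $\Psi\in\Gamma_0(\RR)$, its domain $\dom\Psi=\dom\psi$ is an interval meeting $C$ (so that $\phi=\iota_C+\psi$ is proper and $\prox_\phi\xi$ is well defined), and $\Psi$ is strictly convex and coercive. Hence $\Psi$ has a unique minimizer over $\RR$, which by definition is $p:=\prox_\psi\xi$, and this point lies in $\dom\psi$ since $\Psi(p)=\widetilde{\psi}(\xi)<\pinf$; likewise $\Psi$ has a unique minimizer over the closed interval $C$, namely $q:=\prox_\phi\xi=\operatorname{argmin}_{y\in C}\Psi(y)$. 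The goal is to show $q=P_Cp$.

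The key observation is the following shape property of $\Psi$: whenever $p<s<t$ one has $\Psi(s)<\Psi(t)$, and symmetrically $t<s<p$ implies $\Psi(s)<\Psi(t)$. Indeed, writing $s$ as a proper convex combination $\lambda p+(1-\lambda)t$ with $\lambda\in\zeroun$ and using strict convexity together with $\Psi(p)\leq\Psi(s)$ gives $\Psi(s)<\lambda\Psi(s)+(1-\lambda)\Psi(t)$, i.e.\ $\Psi(s)<\Psi(t)$ (this also covers $\Psi(t)=\pinf$ trivially). In other words, $\Psi$ is strictly decreasing on $\left]-\infty,p\right]\cap\dom\psi$ and strictly increasing on $\left[p,\pinf\right[\cap\dom\psi$.

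It remains to run the case analysis on the position of $p$ relative to $C=[\underline{c},\overline{c}]$ (with $\underline{c}\in[\minf,\pinf[$ and $\overline{c}\in]\minf,\pinf]$). If $p\in C$, then $P_Cp=p$ and, $p$ being the minimizer of $\Psi$ over all of $\RR\supset C$, also $q=p$. If $p<\underline{c}$, then $\underline{c}$ is finite and $\underline{c}\in C$, and $P_Cp=\underline{c}$; moreover $\dom\psi$ is an interval containing $p$ and some point of $C\cap\dom\psi$ lying in $[\underline{c},\pinf[$, hence it contains $\underline{c}$, so $\Psi(\underline{c})<\pinf$. The shape property with $s=\underline{c}$ then yields $\Psi(\underline{c})<\Psi(y)$ for every $y\in C$ with $y>\underline{c}$, and thus $q=\underline{c}=P_Cp$. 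The case $p>\overline{c}$ is entirely symmetric, and if $C$ is unbounded on one side the corresponding case is vacuous. In all cases $\prox_\phi\xi=q=P_Cp=P_C(\prox_\psi\xi)$, which is the assertion. The only point requiring care, and the reason the hypothesis $C\cap\dom\psi\neq\emp$ is needed, is precisely the step showing that the relevant endpoint of $C$ belongs to $\dom\psi$ so that it is a legitimate finite-valued candidate for the constrained minimization; everything else is routine.
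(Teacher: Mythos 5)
Your argument is correct. Note first that the paper itself offers no proof of this lemma: it is quoted verbatim from \cite[Proposition~12(ii)]{Jsts07}, so there is nothing internal to compare against. What you give is a self-contained, elementary alternative to the route one would normally take (characterizing $\pi=\prox_\phi\xi$ via $\xi-\pi\in\partial(\iota_C+\psi)(\pi)$ and then splitting the subdifferential into a normal-cone part and $\partial\psi$, which requires some care with a qualification condition at the endpoints of $C$). Your reduction to the unimodality of $\Psi=\psi+\tfrac12|\cdot-\xi|^2$ about its unconstrained minimizer $p$, followed by the three-way case analysis on the position of $p$ relative to $C$, avoids subdifferential calculus entirely and makes transparent exactly where the hypothesis $C\cap\dom\psi\neq\emp$ is used (to place the relevant endpoint of $C$ inside $\dom\psi$). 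One small imprecision: the ``shape property'' as you first state it ($p<s<t\Rightarrow\Psi(s)<\Psi(t)$) fails in the degenerate case where $s$ and $t$ both lie outside $\dom\psi$, since then both values are $\pinf$; the correct statement is the one you give in your restatement, namely strict monotonicity on $\left]\minf,p\right]\cap\dom\psi$ and on $\left[p,\pinf\right[\cap\dom\psi$ together with $\Psi(s)<\pinf=\Psi(t)$ when $s\in\dom\psi$ and $t\notin\dom\psi$. Since the only instance you invoke has $s=\underline{c}\in\dom\psi$ (which you verify explicitly from the convexity of $\dom\psi$), the proof is unaffected; just phrase the intermediate claim with the restriction to $\dom\psi$ from the outset.
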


\section{Dualization and algorithm}
\label{sec:2}

\subsection{Fenchel-Moreau-Rockafellar duality}

Our analysis will revolve around the following version of the
Fenchel-Moreau-Rockafellar duality formula (see \cite{Fenc53}, 
\cite{More66}, and \cite{Rock67} for historical work).
It will also exploit various aspects of the Baillon-Haddad
theorem \cite{Joca10}.

\begin{lemma} {\rm\cite[Corollary~2.8.5]{Zali02}}
\label{l:5}
Let $\varphi\in\Gamma_0(\HH)$, let $\psi\in\Gamma_0(\GG)$, and let 
$M\in\BL(\HH,\GG)$ be such that $0\in\sri(M(\dom\varphi)-\dom\psi)$. Then 
\begin{equation}
\label{e:F-R}
\inf_{x\in\HH}\:\varphi(x)+\psi(Mx)=-
\min_{v\in\GG}\:\varphi^*(-M^*v)+\psi^*(v).
\end{equation}
\end{lemma}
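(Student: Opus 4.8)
The plan is to prove weak duality by the Fenchel--Young inequality and then to promote it to an equality, with attainment of the minimum on the right-hand side, by analyzing the value function of the primal problem. Throughout, set $\mu=\inf_{x\in\HH}\big(\varphi(x)+\psi(Mx)\big)$.

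\emph{Weak duality.} For arbitrary $x\in\HH$ and $v\in\GG$, the Fenchel--Young inequality applied to $\varphi$ and to $\psi$ gives $\varphi(x)+\varphi^*(-M^*v)\geq\scal{x}{-M^*v}=-\scal{Mx}{v}$ and $\psi(Mx)+\psi^*(v)\geq\scal{Mx}{v}$; adding these and rearranging yields $\varphi(x)+\psi(Mx)\geq-\varphi^*(-M^*v)-\psi^*(v)$, whence, upon taking the infimum over $x$ and the supremum over $v$,
\[
\mu\;\geq\;-\inf_{v\in\GG}\big(\varphi^*(-M^*v)+\psi^*(v)\big).
\]
The qualification hypothesis entails $M(\dom\varphi)\cap\dom\psi\neq\emp$, so $\mu<\pinf$; and if $\mu=\minf$, then the displayed inequality forces $\varphi^*(-M^*v)+\psi^*(v)=\pinf$ for every $v$ and the asserted identity holds trivially. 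Hence I may assume $\mu\in\RR$.

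\emph{The value function.} Introduce $p\colon\GG\to\RXX\colon y\mapsto\inf_{x\in\HH}\big(\varphi(x)+\psi(Mx+y)\big)$. As the infimal projection onto $\GG$ of the jointly convex map $(x,y)\mapsto\varphi(x)+\psi(Mx+y)$, the function $p$ is convex, satisfies $p(0)=\mu$, and has $\dom p=\dom\psi-M(\dom\varphi)$, so the qualification hypothesis reads precisely $0\in\sri(\dom p)$. A short computation---substituting $w=Mx+y$ in the supremum defining $p^*$ and separating the variables $x$ and $w$---gives
\[
(\forall v\in\GG)\qquad p^*(v)=\varphi^*(-M^*v)+\psi^*(v).
\]
Moreover, since $0\in\sri(\dom p)\subseteq\reli(\dom p)$ and $p(0)=\mu\in\RR$, the convex function $p$ does not take the value $\minf$ anywhere (a convex function taking the value $\minf$ somewhere does so throughout the strong relative interior of its domain), so $p$ is proper.

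\emph{Subdifferentiability and conclusion.} The crux is that $\partial p(0)\neq\emp$, and this is exactly where the strong-relative-interior hypothesis is needed: restricting $p$ to the closed affine hull of $\dom p$ makes $0$ a core (hence interior, in that subspace) point of the restricted domain, a proper convex function is subdifferentiable at such points once it is replaced by its lower semicontinuous hull (which agrees with $p$ at every point of $\sri(\dom p)$), and extending the resulting functional to $\GG$ produces a point of $\partial p(0)\subseteq\GG$. Fix $\overline v\in\partial p(0)$. By the equality case of the Fenchel--Young inequality, $p(0)+p^*(\overline v)=\scal{0}{\overline v}=0$, that is, $\varphi^*(-M^*\overline v)+\psi^*(\overline v)=-\mu$. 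Combined with weak duality, this shows that $\inf_{v\in\GG}\big(\varphi^*(-M^*v)+\psi^*(v)\big)=-\mu$ and that this infimum is attained at $\overline v$---note that the \emph{primal} infimum $\mu$ need not be attained, which is why a genuine minimum appears only on the dual side---and rearranging yields the claimed formula. The one substantive point is the nonemptiness of $\partial p(0)$; all of the stronger qualification conditions recorded in Remark~\ref{r:2008} (core, interior, the finite-dimensional reformulation) would serve equally well here. One could alternatively invoke the subdifferential sum rule of Lemma~\ref{l:4}: when the primal minimum \emph{is} attained at some $\overline x$, Fermat's rule and Lemma~\ref{l:4} yield $\overline v\in\partial\psi(M\overline x)$ with $-M^*\overline v\in\partial\varphi(\overline x)$, and summing the two resulting Fenchel--Young equalities again produces $\varphi^*(-M^*\overline v)+\psi^*(\overline v)=-\mu$; the value-function device above is precisely what removes the attainment assumption.
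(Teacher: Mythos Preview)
The paper does not prove Lemma~\ref{l:5}; it is quoted verbatim from \cite[Corollary~2.8.5]{Zali02}. So there is no in-paper argument to compare against, and your value-function/perturbation approach is in fact the standard route (and the one Z\u{a}linescu follows in a more general framework). Weak duality, the identity $p^*(v)=\varphi^*(-M^*v)+\psi^*(v)$, the properness of $p$, and the reduction of the whole statement to $\partial p(0)\neq\emp$ are all correct and cleanly explained.

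There is, however, a genuine gap at the step you yourself flag as the crux. Your justification of $\partial p(0)\neq\emp$ rests on the parenthetical claim that the lower semicontinuous hull of $p$ ``agrees with $p$ at every point of $\sri(\dom p)$''. This is false for arbitrary proper convex $p$, even when the hull is proper. For a concrete obstruction, let $\ell$ be a discontinuous linear functional on $\GG$ and set $p(y)=\max\{\ell(y),-1\}$. Then $p$ is proper and convex with $\dom p=\GG$, so $0\in\sri(\dom p)$; yet $p(0)=0$ while $\overline{p}(0)=-1$, and one checks directly that $\partial p(0)=\emp$ (any subgradient would have to represent $\ell$ as a continuous functional). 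Thus neither the ``$\overline{p}=p$ on $\sri(\dom p)$'' claim nor the desired conclusion $\partial p(0)\neq\emp$ follows from convexity and the $\sri$ condition alone. What your argument never uses---and what is indispensable---is that $\varphi$ and $\psi$ are lower semicontinuous. In Z\u{a}linescu's proof (and in the Attouch--Br\'ezis version) this enters through a Baire-category/open-mapping (Robinson--Ursescu) argument applied to the lsc function $(x,y)\mapsto\varphi(x)+\psi(Mx+y)$, which is what ultimately forces the marginal $p$ to be subdifferentiable at $0$ under the $\sri$ hypothesis. Your final paragraph's alternative via Lemma~\ref{l:4} is fine when the primal infimum is attained, but that attainment is not part of the hypotheses, so it does not close the gap either.
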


The problem of minimizing $\varphi+\psi\circ M$ on $\HH$ in \eqref{e:F-R} 
is referred to as the primal problem, and that of minimizing
$\varphi^*\circ(-M^*)+\psi^*$ on $\GG$ as the dual problem. 
Lemma~\ref{l:5} gives conditions under which a dual solution exists
and the value of the dual problem coincides with the opposite of the 
value of the primal problem. We can now introduce the dual of 
Problem~\ref{prob:1}.

\begin{problem}[dual problem]
\label{prob:2}
Under the same assumptions as in Problem~\ref{prob:1}, 
\begin{equation}
\label{e:prob2}
\underset{v\in\GG}{\mathrm{minimize}}\;\;
\widetilde{f^*}(z-L^*v)+g^*(v)+\scal{v}{r}.
\end{equation}
\end{problem}

\begin{proposition}
\label{p:10novembre2008}
Problem~\ref{prob:2} is the dual of Problem~\ref{prob:1} and it 
admits at least one solution. Moreover, every solution $v$ to 
Problem~\ref{prob:2} is characterized by the inclusion 
\begin{equation}
\label{e:25juin2009}
L\big(\prox_{f}(z-L^*v)\big)-r\in\partial g^*(v).
\end{equation}
\end{proposition}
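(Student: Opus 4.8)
The plan is to identify Problem~\ref{prob:2} as the Fenchel-Moreau-Rockafellar dual of Problem~\ref{prob:1} via Lemma~\ref{l:5}, and then to extract the characterization \eqref{e:25juin2009} from the optimality condition for the dual problem together with the Fenchel reciprocity \eqref{e:terminal2E}. First I would rewrite the primal objective in \eqref{e:prob1} in the composite form $\varphi(x)+\psi(Mx)$ to which Lemma~\ref{l:5} applies. The natural choice is $M=L$, $\psi=g(\cdot-r)$, and $\varphi\colon x\mapsto f(x)+\|x-z\|^2/2$; one then checks that $\varphi\in\Gamma_0(\HH)$ and $\psi\in\Gamma_0(\GG)$, and that the qualification hypothesis $0\in\sri(M(\dom\varphi)-\dom\psi)=\sri(L(\dom f)-(r+\dom g))$ is exactly \eqref{e:1938}. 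The conjugate of $\psi$ is $\psi^*\colon v\mapsto g^*(v)+\scal{v}{r}$ by the standard translation rule for conjugates, and the conjugate of $\varphi$ is handled by Lemma~\ref{l:2} with $w=-z$ (after writing $\|x-z\|^2/2=\|x-(-(-z))\|^2/2$), giving $\varphi^*\colon u\mapsto\widetilde{f^*}(u+z)-\|z\|^2/2$. Substituting into the right-hand side of \eqref{e:F-R} with $-M^*v=-L^*v$ yields $\varphi^*(-L^*v)+\psi^*(v)=\widetilde{f^*}(z-L^*v)+g^*(v)+\scal{v}{r}-\|z\|^2/2$, so minimizing this over $v\in\GG$ is equivalent to \eqref{e:prob2}; this establishes that Problem~\ref{prob:2} is the dual of Problem~\ref{prob:1}. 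Existence of a dual solution is immediate because Lemma~\ref{l:5} asserts the infimum on the right is attained (it is written as a $\min$).

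For the characterization, I would apply Fermat's rule \eqref{e:fermat} to the dual objective. Write the dual objective as $\Phi\colon v\mapsto\widetilde{f^*}(z-L^*v)+\big(g^*(v)+\scal{v}{r}\big)$. The first summand is $v\mapsto\widetilde{f^*}(z-L^*v)$, which by Lemma~\ref{l:6}\ref{l:6iii} is Fr\'echet differentiable with $\nabla\widetilde{f^*}=\prox_f$; by the chain rule its gradient at $v$ is $-L\big(\prox_f(z-L^*v)\big)$. The second summand has subdifferential $\partial g^*(v)+r$. Since the differentiable part can be pulled out of the subdifferential, $\partial\Phi(v)=-L\big(\prox_f(z-L^*v)\big)+\partial g^*(v)+r$, so $v$ is a dual solution $\Leftrightarrow 0\in\partial\Phi(v)\Leftrightarrow L\big(\prox_f(z-L^*v)\big)-r\in\partial g^*(v)$, which is precisely \eqref{e:25juin2009}.

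The main obstacle — really the only place requiring care — is the sum rule for the subdifferential of $\Phi$: strictly speaking one wants $\partial\Phi=\partial(\widetilde{f^*}\circ(z-L^*\cdot))+\partial(g^*+\scal{\cdot}{r})$, which is a nontrivial fact for arbitrary lower semicontinuous convex summands, but here it is elementary because the first summand $v\mapsto\widetilde{f^*}(z-L^*v)$ is a finite-valued continuous convex function on all of $\GG$ (indeed $\widetilde{f^*}$ is finite and continuous on $\HH$ by \eqref{e:moreau1}), so the standard additivity of subdifferentials applies with no qualification condition needed; alternatively one invokes \eqref{e:moreau23} since that summand is G\^ateaux (even Fr\'echet) differentiable. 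A secondary point worth stating carefully is the conjugate computation for $\varphi$: one must match the sign convention in Lemma~\ref{l:2}, where the quadratic term is $\|x-w\|^2/2$, so here $w=-z$ and $\varphi^*(u)=\widetilde{f^*}(u-w)-\|w\|^2/2=\widetilde{f^*}(u+z)-\|z\|^2/2$; the additive constant $-\|z\|^2/2$ is harmless since it does not affect the argmin, which is why Problem~\ref{prob:2} omits it. With these points dispatched, the proof is a direct assembly of Lemma~\ref{l:5}, Lemma~\ref{l:2}, Lemma~\ref{l:6}\ref{l:6iii}, and Fermat's rule.
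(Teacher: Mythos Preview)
Your proof is correct and follows essentially the same route as the paper: the same decomposition $\varphi=f+\|\cdot-z\|^2/2$, $\psi=g(\cdot-r)$, $M=L$, the same appeal to Lemma~\ref{l:5} and Lemma~\ref{l:2} for the dual identification, and the same Fermat-rule argument combined with Lemma~\ref{l:6}\ref{l:6iii} for the characterization. One cosmetic slip: in Lemma~\ref{l:2} the correct match is $w=z$ (not $w=-z$), and the lemma gives $\varphi^*(u)=\widetilde{f^*}(u+w)-\|w\|^2/2$; your two sign flips cancel to the right formula, but writing $w=z$ directly (as the paper does) avoids the confusion.
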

\begin{proof}
Let us set $w=z$, $\varphi=f+\|\cdot-w\|^2/2$, $M=L$, and
$\psi=g(\cdot-r)$. Then $(\forall x\in\HH)$ $\varphi(x)+\psi(Mx)=
f(x)+g(Lx-r)+\|x-z\|^2/2$. Hence, it results from \eqref{e:F-R}
and Lemma~\ref{l:2} that the dual of Problem~\ref{prob:1} is 
to minimize the function 
\begin{align}
\varphi^*\circ(-M^*)+\psi^*\colon v
&\mapsto\widetilde{f^*}(-M^*v+w)-\frac12\|w\|^2+\psi^*(v)\nonumber\\
&=\widetilde{f^*}(z-L^*v)-\frac12\|z\|^2+g^*(v)+\scal{v}{r}
\end{align}
or, equivalently, the function
$v\mapsto\widetilde{f^*}(z-L^*v)+g^*(v)+\scal{v}{r}$. In view of 
\eqref{e:1938}, the first two claims therefore follow from 
Lemma~\ref{l:5}. To establish the last claim, note that \eqref{e:moreau1} 
asserts that $\dom\widetilde{f^*}\circ(z-L^*\cdot)=\GG$. Hence, using 
\eqref{e:fermat}, Lemma~\ref{l:4}, \eqref{e:moreau23}, and 
Lemma~\ref{l:6}\ref{l:6iii}, we get
\begin{eqnarray}
\label{e:2009-06-25b}
v~\text{solves~\eqref{e:prob2}}
&\Leftrightarrow&
0\in\partial\Big(\widetilde{f^*}\circ(z-L^*\cdot)+g^*
+\scal{\cdot}{r}\Big)(v)\nonumber\\
&\Leftrightarrow&0\in -L\big(\nabla\widetilde{f^*}(z-L^*v)\big)
+\partial g^*(v)+r\nonumber\\
&\Leftrightarrow&0\in -L\big(\prox_{f}(z-L^*v)\big)
+\partial g^*(v)+r,
\end{eqnarray}
which yields \eqref{e:25juin2009}.
\end{proof}

A key property underlying our setting is that the primal solution 
can actually be recovered from any dual solution (this is property (c)
in the Introduction).

\begin{proposition}
\label{p:2}
Let $v$ be a solution to Problem~\ref{prob:2} and set
\begin{equation}
\label{e:athenes2008}
x=\prox_f(z-L^*v).
\end{equation}
Then $x$ is the solution to Problem~\ref{prob:1}.
\end{proposition}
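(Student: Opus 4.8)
The plan is to show directly that $x=\prox_f(z-L^*v)$ satisfies Fermat's rule for the primal problem \eqref{e:prob1}, i.e.\ that $0\in\partial\big(f+g(L\cdot-r)+\|\cdot-z\|^2/2\big)(x)$. Since the primal problem has a unique solution (it is $\prox_h z$ with $h$ as in \eqref{e:24juin2009}), establishing this inclusion suffices. First I would use the qualification condition \eqref{e:1938} together with Lemma~\ref{l:4} to write the primal subdifferential as $\partial f(x)+L^*\big(\partial g(Lx-r)\big)+(x-z)$, so that the goal becomes exhibiting a vector $u\in\partial g(Lx-r)$ with $z-x-L^*u\in\partial f(x)$.

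The natural candidate is $u=v$ itself, where $v$ is the given dual solution. To see this, note that from \eqref{e:athenes2008} and the characterization \eqref{e:prox1} of the proximity operator, we have $(z-L^*v)-x\in\partial f(x)$, i.e.\ $z-x-L^*v\in\partial f(x)$; this is exactly the required membership for $f$ once we take $u=v$. It then remains to check that $v\in\partial g(Lx-r)$. Here I would invoke Proposition~\ref{p:10novembre2008}: since $v$ solves Problem~\ref{prob:2}, the characterization \eqref{e:25juin2009} gives $L\big(\prox_f(z-L^*v)\big)-r\in\partial g^*(v)$, that is, $Lx-r\in\partial g^*(v)$. By the duality of subdifferentials \eqref{e:terminal2E} applied to $g\in\Gamma_0(\GG)$, this is equivalent to $v\in\partial g(Lx-r)$, which is precisely what we need.

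Combining the two memberships, $z-x-L^*v\in\partial f(x)$ and $v\in\partial g(Lx-r)$, yields
\begin{equation}
0=(x-z)+(z-x-L^*v)+L^*v\in(x-z)+\partial f(x)+L^*\big(\partial g(Lx-r)\big)=\partial\Big(f+g(L\cdot-r)+\tfrac12\|\cdot-z\|^2\Big)(x),
\end{equation}
where the last equality uses Lemma~\ref{l:4} with $\varphi=f+\|\cdot-z\|^2/2$, $\psi=g(\cdot-r)$, and $M=L$, whose qualification hypothesis $0\in\sri(L(\dom f)-(\dom g+r))$ is just a restatement of \eqref{e:1938}. By Fermat's rule \eqref{e:fermat}, $x$ minimizes \eqref{e:prob1}, and by uniqueness of the minimizer $x$ is \emph{the} solution to Problem~\ref{prob:1}.

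I do not anticipate a serious obstacle here: the argument is essentially a bookkeeping exercise in assembling the subdifferential characterizations of $\prox_f$ and of the dual solution, glued by the sum rule of Lemma~\ref{l:4}. The one point requiring a little care is the correct handling of the shift by $r$ and the quadratic term when applying Lemma~\ref{l:4} — one must make sure that the qualification condition needed for the sum rule on the primal side coincides with \eqref{e:1938} (it does, since adding the everywhere-finite continuous term $\|\cdot-z\|^2/2$ to $f$ does not change $\dom f$, and translating $\dom g$ by $r$ matches the form in \eqref{e:1938}). Alternatively, one could bypass Fermat's rule entirely and argue via strong duality: \eqref{e:F-R} gives that the primal and dual optimal values are negatives of each other, and a direct computation shows that $x=\prox_f(z-L^*v)$ together with the dual solution $v$ achieves equality in the Fenchel-Young inequality, forcing $x$ to be primal-optimal; but the subdifferential route above is cleaner and reuses Proposition~\ref{p:10novembre2008} verbatim.
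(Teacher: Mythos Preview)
Your proof is correct and follows essentially the same route as the paper: derive $z-x-L^*v\in\partial f(x)$ from \eqref{e:prox1}, obtain $v\in\partial g(Lx-r)$ from \eqref{e:25juin2009} and \eqref{e:terminal2E}, then combine via Lemma~\ref{l:4} and Fermat's rule. Your explicit remark that the qualification hypothesis of Lemma~\ref{l:4} coincides with \eqref{e:1938} (since the quadratic term does not alter $\dom f$) is a welcome clarification that the paper leaves implicit.
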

\begin{proof}
We derive from \eqref{e:athenes2008} and \eqref{e:prox1} that 
$z-L^*v-x\in\partial f(x)$. Therefore
\begin{equation}
\label{e:2008-11-10a}
-L^*v\in\partial f(x)+x-z.
\end{equation}
On the other hand, it follows from \eqref{e:25juin2009}, 
\eqref{e:athenes2008}, and \eqref{e:terminal2E} that
\begin{eqnarray}
\label{e:2008-11-10b}
v~\text{solves~\eqref{e:prob2}}
&\Leftrightarrow&Lx-r\in\partial g^*(v)\nonumber\\
&\Leftrightarrow&v\in\partial g(Lx-r)
\nonumber\\
&\Rightarrow&L^*v\in L^*\big(\partial g(Lx-r)\big).
\end{eqnarray}
Upon adding \eqref{e:2008-11-10a} and \eqref{e:2008-11-10b},
invoking Lemma~\ref{l:4}, and then \eqref{e:fermat} we obtain
\begin{align}
\label{e:2008-11-10c}
v~\text{solves~\eqref{e:prob2}}\quad\Rightarrow\quad
0
&=L^*v-L^*v\nonumber\\
&\in\partial f(x)+L^*\big(\partial g(Lx-r)\big)+x-z\nonumber\\
&=\partial f(x)+L^*\big(\partial g(Lx-r)\big)+
\nabla\Big(\frac12\|\cdot-z\|^2\Big)(x)\nonumber\\
&=\partial\Big(f+g(L\cdot-r)+\frac12\|\cdot-z\|^2\Big)(x)\nonumber\\
\quad\Leftrightarrow\quad
x&~\text{solves~\eqref{e:prob1}},
\end{align}
which completes the proof.
\end{proof}

\subsection{Algorithm}

As seen in \eqref{e:rio-mai2009}, the unique solution to
Problem~\ref{prob:1} is $\prox_{h}z$, where $h$ is defined in
\eqref{e:24juin2009}. Since $\prox_{h}z$ cannot be computed 
directly, it will be constructed iteratively by the following 
algorithm, which produces a primal sequence $(x_n)_{n\in\NN}$ as well
as a dual sequence $(v_n)_{n\in\NN}$. 

\begin{algorithm}
\label{algo:1}
Let $(a_{n})_{n\in\NN}$ be a sequence in $\GG$ such that
$\sum_{n\in\NN}\|a_n\|<\pinf$ and let $(b_{n})_{n\in\NN}$ be a 
sequence in $\HH$ such that $\sum_{n\in\NN}\|b_n\|<\pinf$. 
Sequences $(x_n)_{n\in\NN}$ and $(v_n)_{n\in\NN}$ are generated by 
the following routine.
\begin{equation}
\label{e:main1}
\begin{array}{l}
\operatorname{Initialization}\\
\left\lfloor
\begin{array}{l}
\varepsilon\in\left]0,\min\{1,\|L\|^{-2}\}\right[\\[1mm]
v_0\in\GG\\[1mm]
\end{array}
\right.\\[5mm]
\operatorname{For}\;n=0,1,\ldots\\
\left\lfloor
\begin{array}{l}
x_n=\prox_f(z-L^*v_n)+b_n\\[1mm]
\gamma_n\in\left[\varepsilon,2\|L\|^{-2}-\varepsilon\right]\\[1mm]
\lambda_n\in\left[\varepsilon,1\right]\\
v_{n+1}=v_n+\lambda_n\big(\prox_{\gamma_n g^*}(v_n
+\gamma_n(Lx_n-r))+a_n-v_n\big).
\end{array}
\right.\\[2mm]
\end{array}
\end{equation}
\end{algorithm}

It is noteworthy that each iteration of Algorithm~\ref{algo:1} 
achieves full splitting with respect to the operators $L$, $\prox_f$, 
and $\prox_{g^*}$, which are used at separate steps. In addition, 
\eqref{e:main1} incorporates tolerances $a_n$ and $b_n$ in the 
computation of the proximity operators at iteration $n$.

\subsection{Convergence}

Our main convergence result will be a consequence of 
Proposition~\ref{p:2} and the following
results on the convergence of the forward-backward splitting method. 

\begin{theorem}{\rm\cite[Theorem~3.4]{Smms05}}
\label{t:0}
Let $f_1$ and $f_2$ be functions in $\Gamma_0(\GG)$ such that 
the set $G$ of minimizers of $f_1+f_2$ is nonempty and such 
that $f_2$ is differentiable on $\GG$ with a $1/\beta$-Lipschitz 
continuous gradient for some $\beta\in\RPP$. Let $(\gamma_n)_{n\in\NN}$ 
be a sequence in $\left]0,2\beta\right[$ such that 
$\inf_{n\in\NN}\gamma_n>0$ and $\sup_{n\in\NN}\gamma_n<2\beta$, let 
$(\lambda_n)_{n\in\NN}$ be a sequence in $\left]0,1\right]$ such that 
$\inf_{n\in\NN}\lambda_n>0$, and let $(a_{1,n})_{n\in\NN}$ and
$(a_{2,n})_{n\in\NN}$ be sequences in $\GG$ such that 
$\sum_{n\in\NN}\|a_{1,n}\|<\pinf$ and $\sum_{n\in\NN}\|a_{2,n}\|<\pinf$.
Fix $v_0\in\GG$ and, for every $n\in\NN$, set
\begin{equation}
\label{e:main2005}
v_{n+1}=v_n+\lambda_n\Big(\prox_{\gamma_n f_1}
\big(v_n-\gamma_n(\nabla f_2(v_n)+a_{2,n})\big)+a_{1,n}-v_n\Big).
\end{equation}
Then $(v_n)_{n\in\NN}$ converges weakly to a point $v\in G$
and $\sum_{n\in\NN}\big\|\nabla f_2(v_n)-\nabla f_2(v)\|^2<\pinf$.
\end{theorem}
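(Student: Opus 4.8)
The plan is to recognize the recursion \eqref{e:main2005} as an inexact relaxed iteration of the forward-backward operator attached to $f_1$ and $f_2$, and then to run a quasi-Fej\'er monotonicity argument together with Opial's lemma. For $\gamma>0$ put $T_\gamma=\prox_{\gamma f_1}\circ(\Id-\gamma\nabla f_2)$. Using Fermat's rule \eqref{e:fermat}, the characterization \eqref{e:prox1}, and the fact that $f_2$ is everywhere differentiable, one checks that for \emph{every} $\gamma>0$ the fixed-point set of $T_\gamma$ equals $G$, so $G=\operatorname{Fix}T_{\gamma_n}$ for all $n$, irrespective of how $\gamma_n$ fluctuates. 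By the Baillon-Haddad theorem (available through \cite{Joca10}), the $1/\beta$-Lipschitz continuity of $\nabla f_2$ upgrades to $\beta$-cocoercivity, whence $\Id-\gamma\nabla f_2$ is nonexpansive---indeed averaged---for $\gamma\in\left]0,2\beta\right[$, and $T_{\gamma_n}$ is averaged with a constant bounded away from $1$ because $\sup_n\gamma_n<2\beta$. Finally, since $\prox_{\gamma_n f_1}$ is nonexpansive, the deviation between the exact forward-backward point $p_n:=\prox_{\gamma_n f_1}(v_n-\gamma_n\nabla f_2(v_n))$ and the quantity actually used in \eqref{e:main2005} is at most $\|a_{1,n}\|+\gamma_n\|a_{2,n}\|$; this is summable (here $\sup_n\gamma_n<\pinf$ enters), so \eqref{e:main2005} reads $v_{n+1}=v_n+\lambda_n(p_n-v_n)+\lambda_n e_n$ with $\sum_n\|e_n\|<\pinf$.

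Next I would derive the fundamental estimate. Fix $v\in G$. Firm nonexpansiveness of $\prox_{\gamma_n f_1}$ (Lemma~\ref{l:6}\ref{l:6ii-}) together with $\beta$-cocoercivity of $\nabla f_2$ yields
\[
\|p_n-v\|^2\le\|v_n-v\|^2-\gamma_n(2\beta-\gamma_n)\|\nabla f_2(v_n)-\nabla f_2(v)\|^2-\big\|(v_n-p_n)-\gamma_n(\nabla f_2(v_n)-\nabla f_2(v))\big\|^2 .
\]
Feeding this into the expansion of $\|v_{n+1}-v\|^2$ (convexity of $\|\cdot\|^2$ plus the summable perturbation $e_n$) one first obtains a crude recursion $\|v_{n+1}-v\|^2\le(1+\eta_n)\|v_n-v\|^2+\theta_n$ with $\sum_n\eta_n<\pinf$ and $\sum_n\theta_n<\pinf$; the standard deterministic lemma on such recursions shows that $(\|v_n-v\|)_{n\in\NN}$ converges, so $(v_n)_{n\in\NN}$ is bounded. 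With boundedness secured the cross terms produced by $e_n$ become summable and a sharper estimate holds, namely $\|v_{n+1}-v\|^2\le\|v_n-v\|^2-\lambda_n\gamma_n(2\beta-\gamma_n)\|\nabla f_2(v_n)-\nabla f_2(v)\|^2-\lambda_n\|(v_n-p_n)-\gamma_n(\nabla f_2(v_n)-\nabla f_2(v))\|^2+\varepsilon_n$ with $\sum_n\varepsilon_n<\pinf$. Since $\inf_n\lambda_n>0$, $\inf_n\gamma_n>0$, and $\sup_n\gamma_n<2\beta$, the coefficient $\lambda_n\gamma_n(2\beta-\gamma_n)$ is bounded below by some $c>0$; telescoping then gives simultaneously $\sum_n\|\nabla f_2(v_n)-\nabla f_2(v)\|^2<\pinf$ and $\sum_n\|(v_n-p_n)-\gamma_n(\nabla f_2(v_n)-\nabla f_2(v))\|^2<\pinf$, and as the second summand forces its general term to $0$ while $\gamma_n(\nabla f_2(v_n)-\nabla f_2(v))\to0$, we conclude $v_n-p_n\to0$.

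It then remains to upgrade to weak convergence via Opial's lemma: $(v_n)$ is bounded and $(\|v_n-v\|)$ converges for each $v\in G$, so I only need every weak sequential cluster point of $(v_n)$ to lie in $G$. From $p_n=\prox_{\gamma_n f_1}(v_n-\gamma_n\nabla f_2(v_n))$ and \eqref{e:prox1}, adding $\nabla f_2(p_n)$ and using the sum rule (Lemma~\ref{l:4}, applicable since $f_2$ is finite-valued), one gets $\gamma_n^{-1}(v_n-p_n)+\nabla f_2(p_n)-\nabla f_2(v_n)\in\partial(f_1+f_2)(p_n)$. Along a subsequence with $v_{k_n}\weakly u$ one also has $p_{k_n}\weakly u$ (because $v_n-p_n\to0$), while the left-hand side tends strongly to $0$ (using $v_n-p_n\to0$, $\gamma_n$ bounded away from $0$, and Lipschitz continuity of $\nabla f_2$); sequential weak-strong closedness of the graph of the maximal monotone operator $\partial(f_1+f_2)$ then yields $0\in\partial(f_1+f_2)(u)$, i.e., $u\in G$. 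Opial's lemma gives $v_n\weakly v\in G$, and the summability obtained above, applied with this particular $v$, completes the proof. (When the tolerances $a_{1,n},a_{2,n}$ are present one reads $p_n$ as the exact update throughout and observes that the extra terms $-\gamma_n a_{2,n}$ inside the resolvent and $a_{1,n}$ outside it contribute only vanishing or summable quantities, leaving the argument intact.)

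The main obstacle is twofold. First, the inexactness: the cross terms generated by $e_n$ in the Fej\'er inequality can only be controlled once $(v_n)$ is already known to be bounded, so the argument must be bootstrapped through the crude $(1+\eta_n)$-type recursion before the sharp descent estimate becomes usable. Second, the step sizes $\gamma_n$ vary with $n$, which rules out a direct appeal to a Krasnosel'ski\u{\i}-Mann theorem for a \emph{fixed} averaged operator; the remedy is to keep $\gamma_n$ symbolic throughout, to use that $\sup_n\gamma_n<2\beta$ makes the cocoercivity-based constants uniform in $n$, and to prove membership of cluster points in $G$ through the subdifferential form of the update rather than through demiclosedness of any single operator.
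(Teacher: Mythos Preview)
The paper does not supply a proof of this theorem; it is quoted verbatim as \cite[Theorem~3.4]{Smms05} and used as a black box in the proof of Theorem~\ref{t:1}. So there is no ``paper's own proof'' to compare against.

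That said, your argument is correct and is essentially the standard proof of forward-backward convergence (which is the approach of \cite{Smms05}): recast \eqref{e:main2005} as an inexact Krasnosel'ski\u{\i}--Mann iteration of $T_{\gamma_n}=\prox_{\gamma_n f_1}\circ(\Id-\gamma_n\nabla f_2)$, invoke Baillon--Haddad to turn the Lipschitz bound on $\nabla f_2$ into $\beta$-cocoercivity, combine this with firm nonexpansiveness of the resolvent to get the key descent inequality, bootstrap through a quasi-Fej\'er recursion to secure boundedness, telescope to extract the two summabilities, and finish with Opial's lemma using weak--strong closedness of the graph of $\partial(f_1+f_2)$. Your handling of the two technical wrinkles---absorbing the summable perturbations $a_{1,n},a_{2,n}$ before the Fej\'er estimate is sharp, and keeping $\gamma_n$ symbolic so that no single averaged operator is needed---is exactly how \cite{Smms05} proceeds.
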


The following theorem describes the asymptotic behavior of 
Algorithm~\ref{algo:1}.

\begin{theorem}
\label{t:1}
Let $(x_n)_{n\in\NN}$ and $(v_n)_{n\in\NN}$ be sequences generated 
by Algorithm~\ref{algo:1}, and let $x$ be the solution to 
Problem~\ref{prob:1}. Then the following hold.
\begin{enumerate}
\item
\label{t:1i}
$(v_n)_{n\in\NN}$ converges weakly to a solution $v$ to 
Problem~\ref{prob:2} and $x=\prox_f(z-L^*v)$.
\item
\label{t:1x}
$(x_n)_{n\in\NN}$ converges strongly to $x$.
\end{enumerate}
\end{theorem}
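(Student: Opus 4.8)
=== PROOF PROPOSAL ===

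The plan is to recognize Algorithm~\ref{algo:1} as an inexact forward-backward iteration applied to the dual problem \eqref{e:prob2}, invoke Theorem~\ref{t:0} for the dual sequence, and then bootstrap from the dual convergence to the strong primal convergence via the nonexpansiveness properties of $\prox_f$ in Lemma~\ref{l:6}.

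\textbf{Step 1: Identify the dual objective as a sum covered by Theorem~\ref{t:0}.} Set $f_1=g^*+\scal{\cdot}{r}$ and $f_2=\widetilde{f^*}\circ(z-L^*\cdot)$ on $\GG$. By Lemma~\ref{l:6}\ref{l:6iii}, $\widetilde{f^*}$ is Fr\'echet differentiable with $\nabla\widetilde{f^*}=\prox_f$, so by the chain rule $\nabla f_2\colon v\mapsto -L\big(\prox_f(z-L^*v)\big)$. Since $\prox_f$ is nonexpansive (Lemma~\ref{l:6}\ref{l:6ii}), $\nabla f_2$ is $\|L\|^2$-Lipschitz, i.e.\ $1/\beta$-Lipschitz with $\beta=\|L\|^{-2}$; actually the Baillon--Haddad-type cocoercivity is implicit in Lemma~\ref{l:6}\ref{l:6ii-}, but Lipschitz continuity suffices for Theorem~\ref{t:0}. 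The qualification condition \eqref{e:1938} guarantees via Proposition~\ref{p:10novembre2008} that the set of minimizers $G$ of $f_1+f_2$ is nonempty. The parameter ranges in \eqref{e:main1} give $\gamma_n\in[\varepsilon,2\|L\|^{-2}-\varepsilon]\subset\,]0,2\beta[$ with $\inf\gamma_n>0$ and $\sup\gamma_n<2\beta$, and $\lambda_n\in[\varepsilon,1]$ with $\inf\lambda_n>0$.

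\textbf{Step 2: Match the recursions and apply Theorem~\ref{t:0}.} Writing $x_n=\prox_f(z-L^*v_n)+b_n$, we have $-L x_n + r = \nabla f_2(v_n) - Lb_n + r = \nabla f_2(v_n) + (\text{error}) $, so the update $v_{n+1}=v_n+\lambda_n\big(\prox_{\gamma_n g^*}(v_n+\gamma_n(Lx_n-r))+a_n-v_n\big)$ has exactly the form \eqref{e:main2005} with $f_1$-prox term $\prox_{\gamma_n f_1}=\prox_{\gamma_n g^*}(\cdot-\gamma_n r)$ (a translation, since $f_1$ differs from $g^*$ by a linear term), gradient error $a_{2,n}=Lb_n$, and prox error $a_{1,n}=a_n$. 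Both error sequences are summable because $\sum\|b_n\|<\pinf$ and $\|Lb_n\|\le\|L\|\,\|b_n\|$, and $\sum\|a_n\|<\pinf$. Hence Theorem~\ref{t:0} yields $v_n\weakly v$ for some $v\in G$, and $v$ solves Problem~\ref{prob:2}; Proposition~\ref{p:2} then gives $x=\prox_f(z-L^*v)$, proving \ref{t:1i}. The minor bookkeeping nuisance here is handling the linear perturbation $\scal{\cdot}{r}$ cleanly; it is absorbed either by a translation of variables or by noting $\prox_{\gamma(g^*+\scal{\cdot}{r})}(\cdot)=\prox_{\gamma g^*}(\cdot-\gamma r)$.

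\textbf{Step 3: Upgrade to strong primal convergence.} This is the main point of interest. From Theorem~\ref{t:0} we also get $\sum_{n\in\NN}\|\nabla f_2(v_n)-\nabla f_2(v)\|^2<\pinf$, hence in particular $\nabla f_2(v_n)\to\nabla f_2(v)$ strongly, i.e.\ $L\big(\prox_f(z-L^*v_n)\big)\to L\big(\prox_f(z-L^*v)\big)$. But this only controls the image under $L$, not $\prox_f(z-L^*v_n)$ itself. The key is to use the firm nonexpansiveness inequality Lemma~\ref{l:6}\ref{l:6ii-}: with $p_n=\prox_f(z-L^*v_n)$ and $p=\prox_f(z-L^*v)=x$,
\begin{equation}
\|p_n-p\|^2\le\scal{(z-L^*v_n)-(z-L^*v)}{p_n-p}=\scal{v-v_n}{L(p_n-p)}=\scal{v-v_n}{Lp_n-Lp}.
\end{equation}
Since $v_n\weakly v$, the sequence $(v_n)$ is bounded, so $(v-v_n)$ is bounded; combined with $Lp_n-Lp\to 0$ strongly, the right side tends to $0$. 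Therefore $\|p_n-p\|\to 0$, i.e.\ $p_n\to x$ strongly. Finally $x_n=p_n+b_n$ and $b_n\to 0$ (as $\sum\|b_n\|<\pinf$), so $x_n\to x$ strongly, establishing \ref{t:1x}.

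\textbf{Anticipated obstacle.} Steps 1 and 2 are essentially a matching exercise, modulo the care needed for the translation term and the error-sequence accounting. The genuinely substantive step is Step 3: weak convergence of $v_n$ does not transfer to strong convergence of $p_n$ through the mere continuity of $\prox_f$, and it is precisely the \emph{firm} nonexpansiveness (the stronger inequality Lemma~\ref{l:6}\ref{l:6ii-}, not just the $1$-Lipschitz bound Lemma~\ref{l:6}\ref{l:6ii}) together with the \emph{strong} convergence $Lp_n\to Lp$ extracted from the square-summability in Theorem~\ref{t:0} that closes the gap. I would make sure to flag that both ingredients are indispensable.
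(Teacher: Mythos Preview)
Your proposal is correct and follows essentially the same approach as the paper: identify the dual objective as $f_1+f_2$ with $f_1=g^*+\scal{\cdot}{r}$ and $f_2=\widetilde{f^*}(z-L^*\cdot)$, match \eqref{e:main1} to the forward-backward iteration \eqref{e:main2005} (absorbing $\scal{\cdot}{r}$ into the prox via a translation), apply Theorem~\ref{t:0} for part~\ref{t:1i}, and then for part~\ref{t:1x} combine firm nonexpansiveness (Lemma~\ref{l:6}\ref{l:6ii-}) with the boundedness of $(v_n-v)$ and the strong convergence $\nabla f_2(v_n)\to\nabla f_2(v)$ coming from the square-summability in Theorem~\ref{t:0}. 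The only slip is a sign: the gradient error should be $a_{2,n}=-Lb_n$ rather than $Lb_n$, but this is immaterial for the summability argument.
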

\begin{proof}
Let us define two functions $f_1$ and $f_2$ on $\GG$ by
$f_1\colon v\mapsto g^*(v)+\scal{v}{r}$ and 
$f_2\colon v\mapsto\widetilde{f^*}(z-L^*v)$. Then \eqref{e:prob2} 
amounts to minimizing $f_1+f_2$ on $\GG$. Let us first check that all 
the assumptions specified in Theorem~\ref{t:0} are satisfied. First, 
$f_1$ and $f_2$ are in $\Gamma_0(\GG)$ and, by 
Proposition~\ref{p:10novembre2008}, 
$\operatorname{Argmin}f_1+f_2\neq\emp$. Moreover, it follows from 
Lemma~\ref{l:6}\ref{l:6iii} that $f_2$ is differentiable on $\GG$ 
with gradient
\begin{equation}
\label{e:f2'}
\nabla f_2\colon v\mapsto -L\big(\prox_{f}(z-L^*v)\big).
\end{equation}
Hence, we derive from Lemma~\ref{l:6}\ref{l:6ii} that 
\begin{align}
(\forall v\in\GG)(\forall w\in\GG)\quad
\|\nabla f_2(v)-\nabla f_2(w)\|
&\leq\|L\|\,\|\prox_{f}(z-L^*v)-\prox_{f}(z-L^*w)\|\nonumber\\
&\leq\|L\|\,\|L^*v-L^*w\|\nonumber\\
&\leq\|L\|^2\,\|v-w\|.
\end{align}
The reciprocal of the Lipschitz constant of $\nabla f_2$ is therefore
$\beta=\|L\|^{-2}$. Now set 
\begin{equation}
(\forall n\in\NN)\quad a_{1,n}=a_n\quad\text{and}\quad a_{2,n}=-Lb_n. 
\end{equation}
Then $\sum_{n\in\NN}\|a_{1,n}\|=\sum_{n\in\NN}\|a_n\|<\pinf$ and 
$\sum_{n\in\NN}\|a_{2,n}\|\leq\|L\|\sum_{n\in\NN}\|b_{n}\|<\pinf$.
Moreover, for every $n\in\NN$, \eqref{e:main1} yields
\begin{equation}
\label{e:VN}
x_n=\prox_{f}(z-L^*v_n)+b_n
\end{equation}
and, together with \cite[Lemma~2.6(i)]{Smms05},
\begin{align}
\label{e:main2008}
v_{n+1}
&=v_n+\lambda_n\Big(\prox_{\gamma_n g^*}
\big(v_n+\gamma_n(Lx_n-r)\big)+a_{n}-v_n\Big)\nonumber\\
&=v_n+\lambda_n\Big(\prox_{\gamma_n g^*+\scal{\cdot}{\gamma_nr}}
\big(v_n+\gamma_nLx_n\big)+a_{n}-v_n\Big)\nonumber\\
&=v_n+\lambda_n\Big(\prox_{\gamma_n(g^*+\scal{\cdot}{r})}
\big(v_n+\gamma_nL(\prox_{f}(z-L^*v_n)+b_n)\big)
+a_{n}-v_n\Big)\nonumber\\
&=v_n+\lambda_n\Big(\prox_{\gamma_n f_1}
\big(v_n-\gamma_n(\nabla f_2(v_n)+a_{2,n})\big)
+a_{1,n}-v_n\Big).
\end{align}
This provides precisely the update rule \eqref{e:main2005}, 
which allows us to apply Theorem~\ref{t:0}.

\ref{t:1i}: In view of the above, we derive from 
Theorem~\ref{t:0} that $(v_n)_{n\in\NN}$ converges weakly
to a solution $v$ to \eqref{e:prob2}. The second assertion follows
from Proposition~\ref{p:2}.

\ref{t:1x}: Let us set
\begin{equation}
\label{e:b1}
(\forall n\in\NN)\quad y_n=x_n-b_n=\prox_{f}(z-L^*v_n).
\end{equation}
As seen in \ref{t:1i}, $v_n\weakly v$, where $v$ is a solution
to \eqref{e:prob2}, and $x=\prox_f(z-L^*v)$. 
Now set $\rho=\sup_{n\in\NN}\|v_n-v\|$.
Then $\rho<\pinf$ and, using Lemma~\ref{l:6}\ref{l:6ii-} 
and \eqref{e:f2'}, we obtain
\begin{align}
\label{e:b2}
\|y_n-x\|^2
&=\|\prox_{f}(z-L^*v_n)-\prox_{f}(z-L^*v)\|^2\nonumber\\
&\leq\scal{L^*v-L^*v_n}
{\prox_{f}(z-L^*v_n)-\prox_{f}(z-L^*v)}\nonumber\\
&=\scal{v_n-v}
{-L\big(\prox_{f}(z-L^*v_n)\big)+L\big(\prox_{f}(z-L^*v)\big)}\nonumber\\
&=\scal{v_n-v}{\nabla f_2(v_n)-\nabla f_2(v)}\nonumber\\
&\leq\rho\|\nabla f_2(v_n)-\nabla f_2(v)\|.
\end{align}
However, as seen in Theorem~\ref{t:0}, 
$\|\nabla f_2(v_n)-\nabla f_2(v)\|\to 0$. Hence, we derive from
\eqref{e:b2} that $y_n\to x$. In turn, since $b_n\to 0$, 
\eqref{e:b1} yields $x_n\to x$.
\end{proof}

\begin{remark}[Dykstra-like algorithm]
Suppose that, in Problem~\ref{prob:1}, $\GG=\HH$, $L=\Id$, and $r=0$.
Then it follows from Theorem~\ref{t:1}\ref{t:1x} that the sequence
$(x_n)_{n\in\NN}$ produced by Algorithm~\ref{algo:1} converges strongly
to $x=\prox_{f+g}z$. Now let us consider the special case when
Algorithm~\ref{algo:1} is implemented with $v_0=0$, 
$\gamma_n\equiv1$, $\lambda_n\equiv 1$, and no errors, i.e.,
$a_n\equiv 0$ and $b_n\equiv 0$. Then it follows from 
Lemma~\ref{l:6}\ref{l:6iii} that \eqref{e:main1} simplifies to
\begin{equation}
\label{e:main1+}
\begin{array}{l}
\operatorname{Initialization}\\
\left\lfloor
\begin{array}{l}
v_0=0\\[0mm]
\end{array}
\right.\\[1mm]
\operatorname{For}\;n=0,1,\ldots\\
\left\lfloor
\begin{array}{l}
x_n=\prox_f(z-v_n)\\[1mm]
v_{n+1}=x_n+v_n-\prox_{g}(x_n+v_n).
\end{array}
\right.\\[2mm]
\end{array}
\end{equation}
Using \cite[Eq.~(2.10)]{Pjo208} it can then easily be shown by
induction that the resulting sequence $(x_n)_{n\in\NN}$ coincides 
with that produced by the Dykstra-like algorithm \eqref{e:6ans} 
(with $h_1=g$ and $h_2=f$) and that the sequence $(v_n)_{n\in\NN}$ 
coincides with the sequence $(p_n)_{n\in\NN}$ of \eqref{e:6ans}. 
The fact that $x_n\to\prox_{f+g}z$ was established in 
\cite[Theorem~3.3(i)]{Pjo208} using different tools. 
Thus, Algorithm~\ref{algo:1} can be regarded as a generalization 
of the Dykstra-like algorithm \eqref{e:6ans}.
\end{remark}

\begin{remark}
Theorem~\ref{t:1} remains valid if we introduce explicitly
errors in the implementation of the operators $L$ and $L^*$ in
Algorithm~\ref{algo:1}. More precisely, we can replace the steps
defining $x_n$ and $v_n$ in \eqref{e:main1} by
\begin{equation}
\label{e:main9}
\begin{array}{l}
\left\lfloor
\begin{array}{l}
x_n=\prox_f(z-L^*v_n-d_{2,n})+d_{1,n}\\[1mm]
v_{n+1}=v_n+\lambda_n\big(\prox_{\gamma_n g^*}(v_n
+\gamma_n(Lx_n+c_{2,n}-r))+c_{1,n}-v_n\big),
\end{array}
\right.\\[2mm]
\end{array}
\end{equation}
where $(d_{1,n})_{n\in\NN}$ and $(d_{2,n})_{n\in\NN}$ are 
sequences in $\HH$ such that $\sum_{n\in\NN}\|d_{1,n}\|<\pinf$ 
and $\sum_{n\in\NN}\|d_{2,n}\|<\pinf$, and where 
$(c_{1,n})_{n\in\NN}$ and $(c_{2,n})_{n\in\NN}$ are 
sequences in $\GG$ such that
$\sum_{n\in\NN}\|c_{1,n}\|<\pinf$ and 
$\sum_{n\in\NN}\|c_{2,n}\|<\pinf$.
Indeed set, for every $n\in\NN$,
\begin{equation}
\begin{cases}
a_n=c_{1,n}+\prox_{\gamma_n g^*}(v_n
+\gamma_n(Lx_n+c_{2,n}-r))-\prox_{\gamma_n g^*}(v_n
+\gamma_n(Lx_n-r))\\
b_n=d_{1,n}+\prox_f(z-L^*v_n-d_{2,n})-\prox_f(z-L^*v_n).
\end{cases}
\end{equation}
Then \eqref{e:main9} reverts to 
\begin{equation}
\label{e:main9j}
\begin{array}{l}
\left\lfloor
\begin{array}{l}
x_n=\prox_f(z-L^*v_n)+b_{n}\\[1mm]
v_{n+1}=v_n+\lambda_n\big(\prox_{\gamma_n g^*}(v_n
+\gamma_n(Lx_n-r))+a_{n}-v_n\big),
\end{array}
\right.\\[2mm]
\end{array}
\end{equation}
as in \eqref{e:main1}. Moreover, by Lemma~\ref{l:6}\ref{l:6ii},
\begin{align}
(\forall n\in\NN)\quad
\|a_n\|
&\leq\|c_{1,n}\|+\|\prox_{\gamma_n g^*}(v_n
+\gamma_n(Lx_n+c_{2,n}-r))-\prox_{\gamma_n g^*}(v_n
+\gamma_n(Lx_n-r))\|\nonumber\\
&\leq\|c_{1,n}\|+\gamma_n\|c_{2,n}\|\nonumber\\
&\leq\|c_{1,n}\|+2\|L\|^{-2}\|c_{2,n}\|.
\end{align}
Thus, $\sum_{n\in\NN}\|a_{n}\|<\pinf$. Likewise, we have
$\sum_{n\in\NN}\|b_{n}\|<\pinf$.
\end{remark}

\section{Application to specific signal recovery problems}
\label{sec:3}

In this section, we present a few applications of the duality 
framework presented in Section~\ref{sec:2}, which correspond to 
specific choices of $\HH$, $\GG$, $L$, $f$, $g$, $r$, and $z$ in 
Problem~\ref{prob:1}.

\subsection{Best feasible approximation}
\label{sec:31}

A standard feasibility problem in signal recovery is to find 
a signal in the intersection of two closed convex sets modeling
constraints on the ideal solution \cite{Aiep96,Star87,Trus84,Youl82}.
A more structured variant of this problem, is the so-called split
feasibility problem \cite{Byrn05,Cens94,Cens97}, which requires to 
find a signal in a closed convex set $C\subset\HH$ and such that 
some affine transformation of it lies in a closed convex set 
$D\subset\GG$. Such problems typically admit infinitely many 
solutions and one often seeks to find the solution that lies 
closest to a nominal signal $z\in\HH$ \cite{Imag93,Pott93}. 
This leads to the formulation \eqref{e:porquerolles2009-06-12}, 
which consists in finding the best approximation to a reference signal 
$z\in\HH$ from the feasibility set $C\cap L^{-1}(r+D)$.

\begin{problem}
\label{prob:91}
Let $z\in\HH$, let $r\in\GG$, let $C\subset\HH$ and $D\subset\GG$ 
be closed convex sets, and let $L$ be a nonzero operator in 
$\BL(\HH,\GG)$ such that 
\begin{equation}
\label{e:1942}
r\in\sri\big(L(C)-D\big).
\end{equation}
The problem is to 
\begin{equation}
\label{e:prob4}
\underset{\substack{x\in C\\Lx-r\in D}}{\mathrm{minimize}}\;\;
\frac12\|x-z\|^2,
\end{equation}
and its dual is to
\begin{equation}
\label{e:prob4'}
\underset{v\in\GG}{\mathrm{minimize}}\;\;
\frac12\|z-L^*v\|^2-\frac12d_C^2(z-L^*v)+\sigma_D(v)+\scal{v}{r}.
\end{equation}
\end{problem}

\begin{proposition}
\label{p:91}
Let $(b_{n})_{n\in\NN}$ be a sequence in $\HH$ such that
$\sum_{n\in\NN}\|b_n\|<\pinf$, let $(c_{n})_{n\in\NN}$ be a 
sequence in $\GG$ such that $\sum_{n\in\NN}\|c_n\|<\pinf$,
and let $(x_n)_{n\in\NN}$ and $(v_n)_{n\in\NN}$ be sequences 
generated by the following routine.
\begin{equation}
\label{e:main911}
\begin{array}{l}
\operatorname{Initialization}\\
\left\lfloor
\begin{array}{l}
\varepsilon\in\left]0,\min\{1,\|L\|^{-2}\}\right[\\[1mm]
v_0\in\GG\\[1mm]
\end{array}
\right.\\[5mm]
\operatorname{For}\;n=0,1,\ldots\\
\left\lfloor
\begin{array}{l}
x_n=P_C(z-L^*v_n)+b_n\\[1mm]
\gamma_n\in\left[\varepsilon,2\|L\|^{-2}-\varepsilon\right]\\[1mm]
\lambda_n\in\left[\varepsilon,1\right]\\
v_{n+1}=v_n+\lambda_n\gamma_n
\big(Lx_n-r-P_D(\gamma_n^{-1}v_n+Lx_n-r)+c_n\big).
\end{array}
\right.\\[2mm]
\end{array}
\end{equation}
Then the following hold, where $x$ designates the primal
solution to Problem~\ref{prob:91}.
\begin{enumerate}
\item
\label{p:91i}
$(v_n)_{n\in\NN}$ converges weakly to a solution $v$ to 
\eqref{e:prob4'} and $x=P_C(z-L^*v)$.
\item
\label{p:91x}
$(x_n)_{n\in\NN}$ converges strongly to $x$.
\end{enumerate}
\end{proposition}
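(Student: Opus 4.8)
The plan is to derive Proposition~\ref{p:91} as a direct specialization of Theorem~\ref{t:1}, the only work being to identify the data of Problem~\ref{prob:91} with that of Problem~\ref{prob:1} and to rewrite the iteration \eqref{e:main1} as \eqref{e:main911}. Concretely, I would set $f=\iota_C$ and $g=\iota_D$. Then \eqref{e:prob1} becomes $\text{minimize}\;\iota_C(x)+\iota_D(Lx-r)+\|x-z\|^2/2$, which is exactly \eqref{e:prob4}; the qualification condition \eqref{e:1938} becomes $r\in\sri(L(C)-D)$, i.e.\ \eqref{e:1942}, since $\dom\iota_C=C$ and $\dom\iota_D=D$. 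The dual objective in \eqref{e:prob2} is $\widetilde{f^*}(z-L^*v)+g^*(v)+\scal{v}{r}$; here $f^*=\iota_C^*=\sigma_C$ and, by Lemma~\ref{l:6}\ref{l:6i} together with $\widetilde{\iota_C}=d_C^2/2$, we get $\widetilde{\sigma_C}=\|\cdot\|^2/2-d_C^2/2$, while $g^*=\iota_D^*=\sigma_D$. This yields precisely \eqref{e:prob4'}, confirming that Problem~\ref{prob:91} is the instance of Problems~\ref{prob:1}--\ref{prob:2} under consideration; note \eqref{e:1942} guarantees via Problem~\ref{prob:1} that the primal problem has a unique solution $x$.

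Next I would check that Algorithm~\ref{algo:1} collapses to \eqref{e:main911}. By Example~\ref{ex:1938}\ref{ex:1938i}, $\prox_f=\prox_{\iota_C}=P_C$, so the step $x_n=\prox_f(z-L^*v_n)+b_n$ becomes $x_n=P_C(z-L^*v_n)+b_n$. For the dual update I need $\prox_{\gamma_n g^*}=\prox_{\gamma_n\sigma_D}$. Using Lemma~\ref{l:7} with $\varphi=g=\iota_D$ and scaling parameter $\gamma_n$, together with $\prox_{\gamma_n\iota_D}=P_D$ (Example~\ref{ex:1938}\ref{ex:1938i} applied to the closed convex set $D$, noting $\gamma_n\iota_D=\iota_D$), I obtain for every $u\in\GG$
\begin{equation}
\prox_{\gamma_n\sigma_D}u=\prox_{\gamma_n g^*}u=u-\gamma_n P_D(\gamma_n^{-1}u).
\end{equation}
Substituting $u=v_n+\gamma_n(Lx_n-r)$ gives
\begin{equation}
\prox_{\gamma_n g^*}\big(v_n+\gamma_n(Lx_n-r)\big)
=v_n+\gamma_n(Lx_n-r)-\gamma_n P_D\big(\gamma_n^{-1}v_n+Lx_n-r\big),
\end{equation}
so that $v_{n+1}=v_n+\lambda_n\big(\prox_{\gamma_n g^*}(v_n+\gamma_n(Lx_n-r))+a_n-v_n\big)$ becomes $v_{n+1}=v_n+\lambda_n\gamma_n\big(Lx_n-r-P_D(\gamma_n^{-1}v_n+Lx_n-r)+\gamma_n^{-1}a_n\big)$. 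Setting $a_n=\gamma_n c_n$ (which is summable since $\gamma_n\leq 2\|L\|^{-2}-\varepsilon$ is bounded and $\sum_n\|c_n\|<\pinf$) recovers exactly the last line of \eqref{e:main911}, and the ranges of $\varepsilon$, $\gamma_n$, $\lambda_n$ match those in Algorithm~\ref{algo:1}. Thus \eqref{e:main911} is an instance of \eqref{e:main1} with summable error sequences $(a_n)_{n\in\NN}$ and $(b_n)_{n\in\NN}$.

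Finally I would invoke Theorem~\ref{t:1}: part~\ref{t:1i} gives that $(v_n)_{n\in\NN}$ converges weakly to a solution $v$ to the dual problem \eqref{e:prob2}, which here is \eqref{e:prob4'}, with $x=\prox_f(z-L^*v)=P_C(z-L^*v)$, establishing~\ref{p:91i}; part~\ref{t:1x} gives that $(x_n)_{n\in\NN}$ converges strongly to $x$, establishing~\ref{p:91x}. I anticipate no serious obstacle: the entire argument is a translation exercise, and the only point requiring a modicum of care is the rewriting of $\prox_{\gamma_n g^*}$ via Lemma~\ref{l:7} and the accompanying rescaling $a_n=\gamma_n c_n$ to keep the error term summable. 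One should also remark that the hypothesis $L\neq 0$ in Problem~\ref{prob:91} is what allows the choice of $\varepsilon$ and $\gamma_n$ in terms of $\|L\|^{-2}$, exactly as in Algorithm~\ref{algo:1}.
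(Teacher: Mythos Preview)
Your proposal is correct and matches the paper's proof almost verbatim: set $f=\iota_C$, $g=\iota_D$, identify \eqref{e:prob4}--\eqref{e:prob4'} with \eqref{e:prob1}--\eqref{e:prob2}, compute $\prox_{\gamma_n g^*}=\Id-\gamma_nP_D(\gamma_n^{-1}\,\cdot\,)$, set $a_n=\gamma_n c_n$, and invoke Theorem~\ref{t:1}. The only cosmetic difference is that the paper derives the prox formula from Example~\ref{ex:1938}\ref{ex:1938ii} (via $\gamma_n\sigma_D=\sigma_{\gamma_nD}$ and $P_{\gamma_nD}=\gamma_nP_D(\cdot/\gamma_n)$) rather than Lemma~\ref{l:7}; note that to apply Lemma~\ref{l:7} as you intend you should take $\varphi=\sigma_D$ (or equivalently $\varphi=\iota_D$ with scaling parameter $\gamma_n^{-1}$), though your displayed identity is correct regardless.
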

\begin{proof}
Set $f=\iota_C$ and $g=\iota_D$. Then \eqref{e:prob1} reduces to
\eqref{e:prob4} and \eqref{e:1938} reduces to \eqref{e:1942}.
In addition, we derive from Lemma~\ref{l:6}\ref{l:6i} that 
$\widetilde{f^*}=\|\cdot\|^2/2-\widetilde{\iota_C}=
(\|\cdot\|^2-d_C^2)/2$. Hence, in view of \eqref{e:prob2},
\eqref{e:prob4'} in indeed the dual of \eqref{e:prob4}. Furthermore,
items \ref{ex:1938i} and \ref{ex:1938ii} in
Example~\ref{ex:1938} yield $\prox_f=P_C$ and 
\begin{equation}
(\forall n\in\NN)\quad
\prox_{\gamma_n g^*}=
\prox_{\gamma_n\sigma_D}=
\prox_{\sigma_{\gamma_nD}}=
\Id-P_{\gamma_nD}=\Id-\,\gamma_nP_D(\cdot/\gamma_n).
\end{equation}
Finally, set $(\forall n\in\NN)$ $a_n=\gamma_nc_n$. Then 
$\sum_{n\in\NN}\|a_n\|\leq 2\|L\|^{-2}\sum_{n\in\NN}\|c_n\|<\pinf$
and, altogether, \eqref{e:main1} reduces to \eqref{e:main911}. 
Hence, the results follow from Theorem~\ref{t:1}.
\end{proof}

Our investigation was motivated in the Introduction by the duality
framework of \cite{Pott93}. In the next example we recover and sharpen
Proposition~\ref{p:lee93}.

\begin{example}
\label{ex:91}
Consider the special case of Problem~\ref{prob:91} in which 
$z=0$, $\GG=\RR^N$, $D=\{0\}$, $r=(\rho_i)_{1\leq i\leq N}$, and 
$L\colon x\mapsto(\scal{x}{s_i})_{1\leq i\leq N}$, where 
$(s_i)_{1\leq i\leq N}\in\HH^N$ satisfies 
$\sum_{i=1}^N\|s_i\|^2\leq1$. Then, by
\eqref{e:2009-06-30}, \eqref{e:1942} reduces to 
$r\in\rint L(C)$ and \eqref{e:prob4} to \eqref{e:1965}.
Since $\|L\|\leq 1$, specializing \eqref{e:main911} to the case when 
$c_n\equiv 0$ and $\lambda_n\equiv 1$, and introducing the sequence 
$(w_n)_{n\in\NN}=(-v_n)_{n\in\NN}$ for convenience yields
the following routine.
\begin{equation}
\label{e:main912}
\begin{array}{l}
\operatorname{Initialization}\\
\left\lfloor
\begin{array}{l}
\varepsilon\in\left]0,1\right[\\[1mm]
w_0\in\RR^N\\[1mm]
\end{array}
\right.\\[5mm]
\operatorname{For}\;n=0,1,\ldots\\
\left\lfloor
\begin{array}{l}
x_n=P_C(L^*w_n)+b_n\\[1mm]
\gamma_n\in\left[\varepsilon,2\|L\|^{-2}-\varepsilon\right]\\[1mm]
w_{n+1}=w_n+\gamma_n\big(r-Lx_n\big).
\end{array}
\right.\\[2mm]
\end{array}
\end{equation}
Thus, if $\sum_{n\in\NN}\|b_n\|<\pinf$, we deduce from
Proposition~\ref{p:91}\ref{p:91i} and 
Proposition~\ref{p:10novembre2008} the weak convergence of 
$(w_n)_{n\in\NN}$ to a point $w$ such that $v=-w$ satisfies
\eqref{e:25juin2009}, i.e.,
$L(P_C(-L^*v))-r\in\partial\iota^*_{\{0\}}(v)=\{0\}$ or,
equivalently, $L(P_C(L^*w))=r$, and such that $P_C(-L^*v)=P_C(L^*w)$
is the solution to \eqref{e:1965}. In addition, we derive from 
Proposition~\ref{p:91}\ref{p:91x}, the strong convergence of
$(x_n)_{n\in\NN}$ to the solution to \eqref{e:1965}. These results
sharpen the conclusion of Proposition~\ref{p:lee93} (note that 
\eqref{e:main91} corresponds to setting $b_n\equiv 0$ and 
$\gamma_n\equiv\gamma\in\left]0,2\right[$ in \eqref{e:main912}).
\end{example}

\begin{example}
\label{ex:92}
We consider the standard linear inverse problem of recovering an ideal 
signal $\overline{x}\in\HH$ from an observation
\begin{equation}
\label{e:model2}
r=L\overline{x}+s
\end{equation}
in $\GG$, where $L\in\BL(\HH,\GG)$ and where $s\in\GG$ models noise.
Given an estimate $x$ of $\overline{x}$, the residual 
$r-Lx$ should ideally behave like the noise process. Thus, any known 
probabilistic attribute of the noise process can give rise to a 
constraint. This observation was used in \cite{Sign91,Trus84} to 
construct various constraints of the type $Lx-r\in D$, where $D$ is 
closed and convex. In this context, \eqref{e:prob4}
amounts to finding the signal which is closest to some 
nominal signal $z$ and which satisfies a noise-based 
constraint and some convex constraint on 
$\overline{x}$ represented by $C$. Such problems were considered for
instance in \cite{Imag93}, where they were solved by methods that
require the projection onto the set $\menge{x\in\HH}{Lx-r\in D}$, which
is typically hard to compute, even in the simple case when $D$ 
is a closed Euclidean ball \cite{Trus84}. By contrast, the iterative 
method \eqref{e:main911} requires only the projection onto $D$ 
to enforce such constraints. 
\end{example}

\subsection{Soft best feasible approximation}

It follows from \eqref{e:1942} that the underlying feasibility set
$C\cap L^{-1}(r+D)$ in Problem~\ref{prob:91} is nonempty. 
In many situations, feasibility may 
not guaranteed due to, for instance, imprecise prior information or 
unmodeled dynamics in the data formation process \cite{Sign94,Youl86}.
In such instances, one can relax the hard constraints
$x\in C$ and $Lx-r\in D$ in \eqref{e:prob4} by merely forcing that
$x$ be close to $C$ and $Lx-r$ be close to $D$. Let us formulate this
problem within the framework of Problem~\ref{prob:1}.

\begin{problem}
\label{prob:92}
Let $z\in\HH$, let $r\in\GG$, let $C\subset\HH$ and $D\subset\GG$ be 
nonempty closed convex sets, let $L\in\BL(\HH,\GG)$ be a nonzero 
operator, and let $\phi$ and $\psi$ be even functions in 
$\Gamma_0(\RR)\smallsetminus\{\iota_{\{0\}}\}$ such that
\begin{equation}
\label{e:1968}
r\in\sri\big(L\big(\menge{x\in\HH}{d_C(x)\in\dom\phi}\big)-
\menge{y\in\GG}{d_D(y)\in\dom\psi}\big).
\end{equation}
The problem is to 
\begin{equation}
\label{e:prob44}
\underset{x\in\HH}{\mathrm{minimize}}\;\;\phi\big(d_C(x)\big)
+\psi\big(d_D(Lx-r)\big)+\frac12\|x-z\|^2,
\end{equation}
and its dual is to
\begin{equation}
\label{e:prob44'}
\underset{v\in\GG}{\mathrm{minimize}}\;\;
\frac12\|z-L^*v\|^2-{(\phi\circ d_C)^\sim}(z-L^*v)+
\sigma_D(v)+\psi^*(\|v\|)+\scal{v}{r}.
\end{equation}
\end{problem}

Since $\phi$ and $\psi$ are even functions in 
$\Gamma_0(\RR)\smallsetminus\{\iota_{\{0\}}\}$, 
we can use Example~\ref{ex:10} to get an explicitly expression of 
the proximity operators involved and solve the minimization 
problems \eqref{e:prob44} and \eqref{e:prob44'} as follows. 

\begin{proposition}
\label{p:92}
Let $(b_{n})_{n\in\NN}$ be a sequence in $\HH$ such that
$\sum_{n\in\NN}\|b_n\|<\pinf$, let $(c_{n})_{n\in\NN}$ be a 
sequence in $\GG$ such that $\sum_{n\in\NN}\|c_n\|<\pinf$, and 
let $(x_n)_{n\in\NN}$ and $(v_n)_{n\in\NN}$ be sequences generated 
by the following routine.
\begin{equation}
\label{e:main92}
\begin{array}{l}
\operatorname{Initialization}\\
\left\lfloor
\begin{array}{l}
\varepsilon\in\left]0,\min\{1,\|L\|^{-2}\}\right[\\[1mm]
v_0\in\GG\\[1mm]
\end{array}
\right.\\[5mm]
\operatorname{For}\;n=0,1,\ldots\\
\left\lfloor
\begin{array}{l}
y_n=z-L^*v_n\\[2mm]
\operatorname{if}\;\;d_C(y_n)>\max\partial\phi(0)\\
\left\lfloor
\begin{array}{l}
x_n=y_n+\displaystyle{\frac{\prox_{\phi^*}d_C(y_n)}{d_C(y_n)}}
(P_Cy_n-y_n)+b_n
\end{array}
\right.\\[4mm]
\operatorname{if}\;\;d_C(y_n)\leq\max\partial\phi(0)\\
\left\lfloor
\begin{array}{l}
x_n=P_Cy_n+b_n
\end{array}
\right.\\[2mm]
\gamma_n\in\left[\varepsilon,2\|L\|^{-2}-\varepsilon\right]\\[1mm]
w_n=\gamma_n^{-1}v_n+Lx_n-r\\[1mm]
\operatorname{if}\;\;d_D(w_n)>\gamma_n^{-1}\max\partial\psi(0)\\
\left\lfloor
\begin{array}{l}
p_n=\displaystyle{\frac{\prox_{(\gamma_n^{-1}\psi)^*}
d_D(w_n)}{d_D(w_n)}}(w_n-P_Dw_n)+c_n
\end{array}
\right.\\[4mm]
\operatorname{if}\;\;d_D(w_n)\leq\gamma_n^{-1}\max\partial\psi(0)\\
\left\lfloor
\begin{array}{l}
p_n=w_n-P_Dw_n+c_n
\end{array}
\right.\\[2mm]
\lambda_n\in\left[\varepsilon,1\right]\\
v_{n+1}=v_n+\lambda_n\big(\gamma_np_n-v_n\big).
\end{array}
\right.\\[2mm]
\end{array}
\end{equation}
Then the following hold, where $x$ designates the primal
solution to Problem~\ref{prob:92}.
\begin{enumerate}
\item
\label{p:92i}
$(v_n)_{n\in\NN}$ converges weakly to a solution $v$ to 
\eqref{e:prob44'} and, if we set $y=z-L^*v$,
\begin{equation}
\label{e:25fevrier2009}
x=
\begin{cases}
y+\displaystyle{\frac{\prox_{\phi^*}d_C(y)}{d_C(y)}}
(P_Cy-y),&\text{if}\;\;d_C(y)>\max\partial\phi(0);\\
P_Cy,&\text{if}\;\;d_C(y)\leq\max\partial\phi(0).
\end{cases}
\end{equation}
\item
\label{p:92iii}
$(x_n)_{n\in\NN}$ converges strongly to $x$.
\end{enumerate}
\end{proposition}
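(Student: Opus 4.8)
The plan is to reduce Proposition~\ref{p:92} to Theorem~\ref{t:1} by specializing Problem~\ref{prob:1} and Algorithm~\ref{algo:1} to the present data. First I would set $f=\phi\circ d_C$ and $g=\psi\circ d_D$. Since $\phi$ and $\psi$ are even functions in $\Gamma_0(\RR)\smallsetminus\{\iota_{\{0\}}\}$ and $C$, $D$ are nonempty closed convex, Example~\ref{ex:10} guarantees $f\in\Gamma_0(\HH)$ and $g\in\Gamma_0(\GG)$. With these choices, $\dom f=\menge{x\in\HH}{d_C(x)\in\dom\phi}$ and $\dom g=\menge{y\in\GG}{d_D(y)\in\dom\psi}$, so the qualification condition \eqref{e:1938} is precisely \eqref{e:1968}, and \eqref{e:prob1} becomes \eqref{e:prob44}. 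To identify the dual \eqref{e:prob44'}, I would invoke \eqref{e:prob2}: the term $g^*(v)$ equals $(\psi\circ d_D)^*(v)$, which I would compute (or cite from \cite{Luis09}) as $\sigma_D(v)+\psi^*(\|v\|)$, while $\widetilde{f^*}(z-L^*v)=(\phi\circ d_C)^\sim(z-L^*v)$ and Lemma~\ref{l:6}\ref{l:6i} rewrites $\widetilde{f^*}=\|\cdot\|^2/2-(\phi\circ d_C)^\sim$ only when one wants the explicit form; here the stated dual \eqref{e:prob44'} already writes $\tfrac12\|z-L^*v\|^2-(\phi\circ d_C)^\sim(z-L^*v)$, which matches $\widetilde{f^*}(z-L^*v)$ after expanding the envelope via $\widetilde{f^*}=\|\cdot\|^2/2-\widetilde{f}$ with $f=\phi\circ d_C$; so this step is bookkeeping with the conjugacy and Moreau-envelope identities.

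Next I would translate the iteration. The primal step in \eqref{e:main1} is $x_n=\prox_f(z-L^*v_n)+b_n$; writing $y_n=z-L^*v_n$ and applying Example~\ref{ex:10} (specifically \eqref{e:26juin2009}) gives exactly the two-case update for $x_n$ in \eqref{e:main92}, distinguishing $d_C(y_n)>\max\partial\phi(0)$ from $d_C(y_n)\le\max\partial\phi(0)$ (the subcase $y_n\in C$ is absorbed into the second branch since then $P_Cy_n=y_n$). For the dual step I would use Lemma~\ref{l:7}: $\prox_{\gamma_n g^*}(u)=u-\gamma_n\prox_{\gamma_n^{-1}g}(\gamma_n^{-1}u)$ with $u=v_n+\gamma_n(Lx_n-r)$, so $\gamma_n^{-1}u=\gamma_n^{-1}v_n+Lx_n-r=w_n$ and $\prox_{\gamma_n^{-1}g}=\prox_{\gamma_n^{-1}\psi\circ d_D}$. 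Applying Example~\ref{ex:10} to $\gamma_n^{-1}\psi\circ d_D$ (noting $\partial(\gamma_n^{-1}\psi)(0)=\gamma_n^{-1}\partial\psi(0)$ and $(\gamma_n^{-1}\psi)^*=\gamma_n^{-1}\psi^*(\gamma_n\,\cdot)$, with the threshold condition $d_D(w_n)\gtrless\gamma_n^{-1}\max\partial\psi(0)$) yields exactly the two branches for $p_n$ in \eqref{e:main92}, and then $v_{n+1}=v_n+\lambda_n(\gamma_n p_n + a_n - v_n)$ after setting $a_n$ to collect the $c_n$ error and the $\gamma_n$ scaling.

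Then I would control the errors: set $a_n$ so that the dual update of \eqref{e:main92} matches \eqref{e:main1}; concretely $a_n$ is $\gamma_n$ times the error introduced by adding $c_n$ inside $p_n$, and by Lemma~\ref{l:6}\ref{l:6ii} the prox is nonexpansive, so $\|a_n\|\le\gamma_n\|c_n\|\le 2\|L\|^{-2}\|c_n\|$, hence $\sum_n\|a_n\|<\pinf$; the primal error is just $b_n$, already summable by hypothesis. With \eqref{e:1968} providing \eqref{e:1938}, all hypotheses of Theorem~\ref{t:1} hold, so part~\ref{t:1i} gives weak convergence of $(v_n)$ to a dual solution $v$ together with $x=\prox_f(z-L^*v)$, which upon applying \eqref{e:26juin2009} once more with $y=z-L^*v$ is exactly \eqref{e:25fevrier2009}, proving \ref{p:92i}; and part~\ref{t:1x} gives strong convergence $x_n\to x$, proving \ref{p:92iii}. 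I expect the only genuinely delicate point to be the careful handling of the scaling in $\prox_{\gamma_n^{-1}\psi}$ — getting the threshold $\gamma_n^{-1}\max\partial\psi(0)$ and the conjugate $\prox_{(\gamma_n^{-1}\psi)^*}$ exactly right when passing through Lemma~\ref{l:7} and Example~\ref{ex:10} — since everything else is a direct substitution.
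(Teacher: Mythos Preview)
Your proposal is correct and follows essentially the same route as the paper: set $f=\phi\circ d_C$, $g=\psi\circ d_D$, identify \eqref{e:prob44} and \eqref{e:prob44'} as instances of \eqref{e:prob1} and \eqref{e:prob2}, use Example~\ref{ex:10} for $\prox_f$ and Lemma~\ref{l:7} plus Example~\ref{ex:10} for $\prox_{\gamma_n g^*}$, set $a_n=\gamma_n c_n$, and invoke Theorem~\ref{t:1}. One small remark: the nonexpansiveness of $\prox$ is not needed for the error bound, since $c_n$ is added \emph{after} the prox computation in \eqref{e:main92}, so $a_n=\gamma_n c_n$ exactly and $\|a_n\|=\gamma_n\|c_n\|$; your inequality is still valid, just slightly overcautious in its justification.
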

\begin{proof}
Set $f=\phi\circ d_C$ and $g=\psi\circ d_D$. Since $d_C$ and $d_D$ are
continuous convex functions, $f\in\Gamma_0(\HH)$ and 
$g\in\Gamma_0(\GG)$. Moreover, \eqref{e:1968} implies that
\eqref{e:1938} holds. Thus, Problem~\ref{prob:92} is a special case 
of Problem~\ref{prob:1}. On the other hand, it follows from 
Lemma~\ref{l:6}\ref{l:6i} that 
$\widetilde{f^*}=\|\cdot\|^2/2-(\phi\circ d_C)^\sim$ and from 
\cite[Lemma~2.2]{Luis09} that $g^*=\sigma_D+\psi^*\circ\|\cdot\|$.
This shows that \eqref{e:prob44'} is the dual of \eqref{e:prob44}.
Let us now examine iteration $n$ of the algorithm. In view of 
Example~\ref{ex:10}, the vector $x_n$ in \eqref{e:main92} is precisely 
the vector $x_n=\prox_f(z-L^*v_n)+b_n$ of \eqref{e:main1}. Moreover,
using successively the definition of $w_n$ in \eqref{e:main92}, 
Lemma~\ref{l:7}, Example~\ref{ex:10}, and the definition of $p_n$ in 
\eqref{e:main92}, we obtain
\begin{multline}
\gamma_n^{-1}\prox_{\gamma_n g^*}(v_n+\gamma_n(Lx_n-r))\\
\begin{aligned}[b]
&=\gamma_n^{-1}\prox_{\gamma_n g^*}(\gamma_n w_n)\\
&=w_n-\prox_{\gamma_n^{-1} g}w_n\\
&=w_n-\prox_{(\gamma_n^{-1}\psi)\circ d_D}w_n\\
&=
\begin{cases}
\displaystyle{\frac{\prox_{(\gamma_n^{-1}\psi)^*}
d_D(w_n)}{d_D(w_n)}}(w_n-P_Dw_n)&\text{if}\;\;d_D(w_n)>
\gamma_n^{-1}\max\partial\psi(0)\\
w_n-P_Dw_n&\text{if}\;\;d_D(w_n)\leq\gamma_n^{-1}\max\partial\psi(0)
\end{cases}
\\
&=p_n-c_n.
\end{aligned}
\end{multline}
Altogether, \eqref{e:main92} is a special instance of \eqref{e:main1} 
in which $(\forall n\in\NN)$ $a_n=\gamma_nc_n$. Therefore, since
$\sum_{n\in\NN}\|a_n\|\leq 2\|L\|^{-2}\sum_{n\in\NN}\|c_n\|<\pinf$, the
assertions follow from Theorem~\ref{t:1}, where we have
used \eqref{e:26juin2009} to get \eqref{e:25fevrier2009}.
\end{proof}

\begin{example}
\label{ex:hong-kong08-1}
We can obtain a soft-constrained version of the Potter-Arun problem
\eqref{e:1965} revisited in Example~\ref{ex:91} by 
specializing Problem~\ref{prob:92} as follows:
$z=0$, $\GG=\RR^N$, $D=\{0\}$, $r=(\rho_i)_{1\leq i\leq N}$,
and $L\colon x\mapsto(\scal{x}{s_i})_{1\leq i\leq N}$, where 
$(s_i)_{1\leq i\leq N}\in\HH^N$ satisfies 
$\sum_{i=1}^N\|s_i\|^2\leq1$. We thus arrive at the relaxed 
version of \eqref{e:1965}
\begin{equation}
\label{e:hong-kong2008-12}
\underset{x\in\HH}{\mathrm{minimize}}\;\;\phi(d_C(x))
+\psi\Big(\sqrt{\textstyle{\sum_{i=1}^N}
|\scal{x}{s_i}-\rho_i|^2}\Big)+\frac12\|x\|^2.
\end{equation}
Since $D=\{0\}$, we can replace each occurrence of $d_D(w_n)$ by 
$\|w_n\|$ and each occurrence of $w_n-P_Dw_n$ by $w_n$ in 
\eqref{e:main92}. Proposition~\ref{p:92}\ref{p:92iii} asserts that 
any sequence $(x_n)_{n\in\NN}$ produced by the resulting algorithm 
converges strongly to the solution to \eqref{e:hong-kong2008-12}. 
For the sake of illustration, let us consider the case when 
$\phi=\alpha|\cdot|^{4/3}$ and $\psi=\beta|\cdot|$, for some 
$\alpha$ and $\beta$ in $\RPP$. Then $\dom\psi=\RR$ and 
\eqref{e:1968} is trivially satisfied. In addition,
\eqref{e:hong-kong2008-12} becomes 
\begin{equation}
\label{e:hong-kong2008-13}
\underset{x\in\HH}{\mathrm{minimize}}\;\;\alpha d^{4/3}_C(x)
+\beta\sqrt{\textstyle{\sum_{i=1}^N}
|\scal{x}{s_i}-\rho_i|^2}+\frac12\|x\|^2.
\end{equation}
Since $\phi^*\colon\mu\mapsto 27|\mu|^4/(256\alpha^3)$,
$\prox_{\phi^*}$ in \eqref{e:main92} can be 
derived from Example~\ref{ex:phi}\ref{ex:phivi}. On the other hand,
since $\psi^*=\iota_{[-\beta,\beta]}$, 
Example~\ref{ex:1938}\ref{ex:1938i} yields
$\prox_{\psi^*}=P_{[-\beta,\beta]}$.
Thus, upon setting, for simplicity, $b_n\equiv 0$, $c_n\equiv 0$, 
$\lambda_n\equiv 1$, and $\gamma_n\equiv 1$ (note that $\|L\|\leq 1$) 
in \eqref{e:main92} and observing that $\partial\phi(0)=\{0\}$ 
and $\partial\psi(0)=[-\beta,\beta]$, we obtain the following algorithm, 
where $L^*\colon(\nu_i)_{1\leq i\leq N}\mapsto\sum_{i=1}^N\nu_is_i$.
\[
\begin{array}{l}
\operatorname{Initialization}\\
\left\lfloor
\begin{array}{l}
\tau={3}/(2\alpha 4^{1/3}),\;
\sigma={256\alpha^3}/{729}\\[3mm]
v_0\in\RR^N\\[1mm]
\end{array}
\right.\\[6mm]
\operatorname{For}\;n=0,1,\ldots\\
\left\lfloor
\begin{array}{l}
y_n=z-L^*v_n\\[2mm]
\operatorname{if}\;\;y_n\notin C\\
\left\lfloor
\begin{array}{l}
x_n=y_n+\displaystyle{\frac{
\bigg|{\sqrt{d^2_C(y_n)+\sigma}
+d_C(y_n)}\bigg|^{1/3}-\bigg|{\sqrt{d^2_C(y_n)+\sigma}-
d_C(y_n)}\bigg|^{1/3}}{\tau d_C(y_n)}}(P_Cy_n-y_n)\\[4mm]
\end{array}
\right.\\[4mm]
\operatorname{if}\;\;y_n\in C\\
\left\lfloor
\begin{array}{l}
x_n=y_n
\end{array}
\right.\\[2mm]
w_n=v_n+Lx_n-r\\[1mm]
\operatorname{if}\;\;\|w_n\|>\beta\\
\left\lfloor
\begin{array}{l}
v_{n+1}=\displaystyle{\frac{\beta}{\|w_n\|}}w_n
\end{array}
\right.\\[4mm]
\operatorname{if}\;\;\|w_n\|\leq\beta\\
\left\lfloor
\begin{array}{l}
v_{n+1}=w_n.
\end{array}
\right.\\[2mm]
\end{array}
\right.\\[2mm]
\end{array}
\]
As shown above, the sequence $(x_n)_{n\in\NN}$ converges strongly 
to the solution to \eqref{e:hong-kong2008-13}.
\end{example}

\begin{remark}
\label{ex:rio-juin2009}
Alternative relaxations of \eqref{e:1965} can be derived
from Problem~\ref{prob:1}. For instance, given an 
even function $\phi\in\Gamma_0(\RR)\smallsetminus\{\iota_{\{0\}}\}$ 
and $\alpha\in\RPP$, an alternative to \eqref{e:hong-kong2008-12} is 
\begin{equation}
\label{e:rio2009-05-20}
\underset{x\in\HH}{\mathrm{minimize}}\;\;
\phi(d_C(x))+\alpha\max_{1\leq i\leq N}|\scal{x}{s_i}-\rho_i|
+\frac12\|x\|^2.
\end{equation}
This formulation results from \eqref{e:prob1} with
$z=0$, $f=\phi\circ d_C$, $\GG=\RR^N$, $r=(\rho_i)_{1\leq i\leq N}$,
$L\colon x\mapsto(\scal{x}{s_i})_{1\leq i\leq N}$, and
$g=\alpha\|\cdot\|_{\infty}$ (note that 
\eqref{e:1938} holds since $\dom g=\GG$). Since $g^*=\iota_{D}$,
where $D=\menge{(\nu_i)_{1\leq i\leq N}\in\RR^N}
{\sum_{i=1}^N|\nu_i|\leq\alpha}$, the dual problem \eqref{e:prob2} 
therefore assumes the form 
\begin{equation}
\label{e:rio2009-2}
\underset{(\nu_i)_{1\leq i\leq N}\in D}{\mathrm{minimize}}\;\;
\frac12\bigg\|\sum_{i=1}^N\nu_is_i\bigg\|^2-{(\phi\circ d_C)^\sim}
\bigg(-\sum_{i=1}^N\nu_is_i\bigg)+\sum_{i=1}^N\rho_i\nu_i.
\end{equation}
The proximity operators of $f=\phi\circ d_C$ and 
$\gamma_n g^*=\iota_D$ required by Algorithm~\ref{algo:1} are supplied 
by Example~\ref{ex:10} and Example~\ref{ex:1938}\ref{ex:1938i},
respectively. Strong convergence of the resulting sequence 
$(x_n)_{n\in\NN}$ to the solution to \eqref{e:rio2009-05-20} is 
guaranteed by Theorem~\ref{t:1}\ref{t:1x}.
\end{remark}

\subsection{Denoising over dictionaries}
\label{sec:32}

In denoising problems, the goal is to recover the original form of 
an ideal signal $\overline{x}\in\HH$ from a corrupted observation 
\begin{equation}
\label{e:model}
z=\overline{x}+s,
\end{equation}
where $s\in\HH$ is the realization of a noise process which may for
instance model imperfections in the data recording instruments, 
uncontrolled dynamics, or physical interferences. 
A common approach to solve this problem is to minimize the 
least-squares data fitting functional $x\mapsto\|x-z\|^2/2$ subject 
to some constraints on $x$ that represent 
a priori knowledge on the ideal solution $\overline{x}$ and
some affine transformation $L\overline{x}-r$ thereof, where 
$L\in\BL(\HH,\GG)$ and $r\in\GG$. By measuring the degree of 
violation of these constraints via potentials $f\in\Gamma_0(\HH)$ and 
$g\in\Gamma_0(\GG)$, we arrive at \eqref{e:prob1}. In this context, 
$L$ can be a gradient \cite{Cham04,Dura07,Kark01,Rudi92}, a low-pass 
filter \cite{Andr77,Twom65}, a wavelet or a frame decomposition 
operator \cite{Jsts07,Dono94,Weav91}.  
Alternatively, the vector $r\in\GG$ may arise from the availability 
of a second observation in the form of a noise-corrupted 
linear measurement of $\overline{x}$, as in
\eqref{e:model2} \cite{Chau07}. 

In this section, the focus is placed on models in which information 
on the scalar products $(\scal{\overline{x}}{e_k})_{k\in\KK}$ of the 
original signal $\overline{x}$ against a finite or infinite a sequence 
of reference unit norm vectors $(e_k)_{k\in\KK}$ of $\HH$, called a 
dictionary, is available. In practice, such information can take 
various forms, e.g., sparsity, distribution type, statistical 
properties \cite{Chau07,Siop07,Daub04,Forn07,Mall99,Trop06}, and they 
can often be modeled in a variational framework by introducing a 
sequence of convex potentials $(\phi_k)_{k\in\KK}$. If we model the 
rest of the information available about $\overline{x}$ via a potential 
$f$, we obtain the following formulation.

\begin{problem}
\label{prob:11}
Let $z\in\HH$, let $f\in\Gamma_0(\HH)$, let $(e_k)_{k\in\KK}$ be a 
sequence of unit norm vectors in $\HH$ such that 
\begin{equation}
\label{e:frame1}
(\exi\delta\in\RPP)(\forall x\in\HH)\;\;\sum_{k\in\KK}|\scal{x}{e_k}|^2
\leq\delta\|x\|^2,
\end{equation}
and let $(\phi_k)_{k\in\KK}$ be functions in $\Gamma_0(\RR)$ such that 
\begin{equation}
\label{e:1937}
(\forall k\in\KK)\quad\phi_k\geq\phi_k(0)=0
\end{equation}
and 
\begin{equation}
\label{e:1939}
0\in\sri\Menge{\big(\scal{x}{e_k}-\xi_k\big)_{k\in\KK}}
{(\xi_k)_{k\in\KK}\in\ell^2(\KK),\;\;
\sum_{k\in\KK}\phi_k(\xi_k)<\pinf,\;\,\text{and}\;\,x\in\dom f}.
\end{equation}
The problem is to 
\begin{equation}
\label{e:prob11}
\underset{x\in\HH}{\mathrm{minimize}}\;\;
f(x)+\sum_{k\in\KK}\phi_k(\scal{x}{e_k})+\frac12\|x-z\|^2,
\end{equation}
and its dual is to
\begin{equation}
\label{e:prob21}
\underset{(\nu_k)_{k\in\KK}\in\ell^2(\KK)}{\mathrm{minimize}}\;\;
\widetilde{f^*}\bigg(z-\sum_{k\in\KK}\nu_{n,k}e_k\bigg)+
\sum_{k\in\KK}\phi^*_k(\nu_k).
\end{equation}
\end{problem}

Problems \eqref{e:prob11} and \eqref{e:prob21} can be solved by the 
following algorithm, where $\alpha_{n,k}$ stands for a 
numerical tolerance in the implementation of
the operator $\prox_{\gamma_n\phi_k^*}$. Let us note that closed-form
expressions for the proximity operators of a wide range of functions 
in $\Gamma_0(\RR)$ are available \cite{Chau07,Siop07,Smms05},
in particular in connection with Bayesian formulations involving
log-concave densities, and with problems involving sparse 
representations (see also Examples~\ref{ex:phi}--\ref{ex:8}
and Lemmas~\ref{l:santiago06-1}--\ref{l:3}).

\begin{proposition}
\label{p:11}
Let $((\alpha_{n,k})_{n\in\NN})_{k\in\KK}$ be sequences in $\RR$ such that
$\sum_{n\in\NN}\sqrt{\sum_{k\in\KK}|\alpha_{n,k}|^2}<\pinf$, let
$(b_{n})_{n\in\NN}$ be a sequence in $\HH$ such that 
$\sum_{n\in\NN}\|b_n\|<\pinf$, and let
$(x_n)_{n\in\NN}$ and $(v_n)_{n\in\NN}=((\nu_{n,k})_{k\in\KK})_{n\in\NN}$
be sequences generated by the following routine.
\begin{equation}
\label{e:main12}
\begin{array}{l}
\operatorname{Initialization}\\
\left\lfloor
\begin{array}{l}
\varepsilon\in\left]0,\min\{1,\delta^{-1}\}\right[\\[1mm]
(\nu_{0,k})_{k\in\KK}\in\ell^2(\KK)\\[1mm]
\end{array}
\right.\\[5mm]
\operatorname{For}\;n=0,1,\ldots\\
\left\lfloor
\begin{array}{l}
x_n=\prox_f\big(z-\sum_{k\in\KK}\nu_{n,k}e_k\big)+b_n\\[1mm]
\gamma_n\in\left[\varepsilon,2\delta^{-1}-\varepsilon\right]\\[1mm]
\lambda_n\in\left[\varepsilon,1\right]\\
\operatorname{For~every}\;k\in\KK\\
\left\lfloor
\begin{array}{l}
\nu_{n+1,k}=\nu_{n,k}+\lambda_n\big(\prox_{\gamma_n\phi_k^*}
(\nu_{n,k}+\gamma_n\scal{x_n}{e_k})+\alpha_{n,k}-\nu_{n,k}\big).
\end{array}
\right.\\[2mm]
\end{array}
\right.\\[2mm]
\end{array}
\end{equation}
Then the following hold, where $x$ designates the primal
solution to Problem~\ref{prob:11}.
\begin{enumerate}
\item
\label{p:11i}
$(v_n)_{n\in\NN}$ converges weakly to a solution 
$(\nu_k)_{k\in\KK}$ to \eqref{e:prob21} and 
$x=\prox_f(z-\sum_{k\in\KK}\nu_ke_k)$.
\item
\label{p:11ii}
$(x_n)_{n\in\NN}$ converges strongly to $x$.
\end{enumerate}
\end{proposition}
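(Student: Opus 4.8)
The plan is to recognize Proposition~\ref{p:11} as a concrete specialization of the abstract convergence result, Theorem~\ref{t:1}, applied to Problem~\ref{prob:1}, and to verify that the iteration \eqref{e:main12} is genuinely an instance of Algorithm~\ref{algo:1}. First I would set $\GG=\ell^2(\KK)$ and define $L\colon\HH\to\GG\colon x\mapsto(\scal{x}{e_k})_{k\in\KK}$; condition \eqref{e:frame1} guarantees that $L$ is a well-defined bounded operator with $\|L\|^2\leq\delta$, and its adjoint is $L^*\colon(\nu_k)_{k\in\KK}\mapsto\sum_{k\in\KK}\nu_ke_k$. I would take $r=0$, and let $g\colon\GG\to\RX\colon(\xi_k)_{k\in\KK}\mapsto\sum_{k\in\KK}\phi_k(\xi_k)$. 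By \eqref{e:1937} each $\phi_k$ is nonnegative with $\phi_k(0)=0$, so $g$ is a well-defined function in $\Gamma_0(\GG)$ (the separable sum of such functions is lower semicontinuous, convex, proper, with $g\geq g(0)=0$), and \eqref{e:1939} is exactly the qualification condition \eqref{e:1938} with this choice of $L$, $f$, $g$, and $r=0$. Thus Problem~\ref{prob:11} is a bona fide instance of Problem~\ref{prob:1}, with objective \eqref{e:prob11} matching \eqref{e:prob1}, and its dual \eqref{e:prob21} matches \eqref{e:prob2} since $\scal{v}{r}=0$.

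Next I would identify the proximity operator of $g^*$ componentwise. Since $g$ is the separable sum $\sum_{k}\phi_k$ acting on the canonical orthonormal basis of $\ell^2(\KK)$, Example~\ref{ex:chaux07} (whose hypotheses \ref{p:quezoni}--\ref{p:quezoniv} are met: the canonical basis is orthonormal, the $\phi_k$ are in $\Gamma_0(\RR)$, and \eqref{e:1937} furnishes the required $\phi_k\geq\phi_k(0)=0$ on all of $\KK$, so we may take $\LL=\KK$) gives $\prox_{\gamma_n g}(\xi_k)_{k}=(\prox_{\gamma_n\phi_k}\xi_k)_{k}$. By Lemma~\ref{l:7} (or equivalently by taking conjugates and using separability), $\prox_{\gamma_n g^*}(\nu_k)_{k}=(\prox_{\gamma_n\phi_k^*}\nu_k)_{k}$. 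Hence the update $v_{n+1}=v_n+\lambda_n(\prox_{\gamma_n g^*}(v_n+\gamma_n Lx_n)+a_n-v_n)$ of \eqref{e:main1} decouples coordinatewise into exactly the inner loop of \eqref{e:main12}, provided we set $a_n=(\alpha_{n,k})_{k\in\KK}$. The summability hypothesis $\sum_{n\in\NN}\sqrt{\sum_{k\in\KK}|\alpha_{n,k}|^2}<\pinf$ is precisely $\sum_{n\in\NN}\|a_n\|_{\ell^2(\KK)}<\pinf$, and $\sum_{n\in\NN}\|b_n\|<\pinf$ is assumed, so the error conditions of Algorithm~\ref{algo:1} hold. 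Finally, since $\|L\|^2\leq\delta$, the admissible ranges $\gamma_n\in[\varepsilon,2\delta^{-1}-\varepsilon]$ and $\varepsilon\in\left]0,\min\{1,\delta^{-1}\}\right[$ used in \eqref{e:main12} are (possibly conservative) choices within the ranges $\gamma_n\in[\varepsilon,2\|L\|^{-2}-\varepsilon]$, $\varepsilon\in\left]0,\min\{1,\|L\|^{-2}\}\right[$ required by \eqref{e:main1}; indeed $\delta^{-1}\leq\|L\|^{-2}$, so every $\gamma_n$ permitted here lies in $[\varepsilon,2\|L\|^{-2}-\varepsilon]$.

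With these identifications the iteration \eqref{e:main12} is literally an instance of Algorithm~\ref{algo:1}, so Theorem~\ref{t:1} applies directly: part~\ref{t:1i} yields that $(v_n)_{n\in\NN}$ converges weakly in $\ell^2(\KK)$ to a solution $(\nu_k)_{k\in\KK}$ of the dual problem \eqref{e:prob21} and that $x=\prox_f(z-L^*v)=\prox_f(z-\sum_{k\in\KK}\nu_ke_k)$, which is assertion~\ref{p:11i}; and part~\ref{t:1x} yields that $(x_n)_{n\in\NN}$ converges strongly to the primal solution $x$, which is assertion~\ref{p:11ii}.

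I expect the only genuine point needing care is the verification that $g=\sum_{k\in\KK}\phi_k$ is a well-defined element of $\Gamma_0(\GG)$ and that $\prox_{g^*}$ decouples coordinatewise when $\KK$ is infinite — the nonnegativity normalization \eqref{e:1937} is exactly what makes the infinite sum lower semicontinuous (as a supremum of finite partial sums) and proper, and it is also precisely hypothesis \ref{p:quezonivb} of Example~\ref{ex:chaux07} with $\LL=\KK$, so both issues are resolved by the assumptions as stated. Everything else is a routine matching of notation between \eqref{e:main12} and \eqref{e:main1}.
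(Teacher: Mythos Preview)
Your proposal is correct and follows essentially the same approach as the paper's proof: both recognize Problem~\ref{prob:11} as the instance of Problem~\ref{prob:1} with $\GG=\ell^2(\KK)$, $L\colon x\mapsto(\scal{x}{e_k})_{k\in\KK}$, $r=0$, and $g=\sum_{k}\phi_k$, verify that \eqref{e:main12} is Algorithm~\ref{algo:1} with $a_n=(\alpha_{n,k})_{k\in\KK}$, and then invoke Theorem~\ref{t:1}. The only minor difference is that the paper computes $g^*=\sum_k\phi_k^*$ explicitly, checks $\phi_k^*\geq\phi_k^*(0)=0$, and applies Example~\ref{ex:chaux07} directly to $g^*$, whereas you obtain the same componentwise formula for $\prox_{\gamma_n g^*}$ via Lemma~\ref{l:7}.
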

\begin{proof}
Set $\GG=\ell^2(\KK)$ and $r=0$. Define
\begin{equation}
\label{e:defF}
L\colon\HH\to\GG\colon x\mapsto(\scal{x}{e_k})_{k\in\KK}
\quad\text{and}\quad
g\colon\GG\to\RX\colon(\xi_k)_{k\in\KK}\mapsto
\sum_{k\in\KK}\phi_k(\xi_k). 
\end{equation}
Then $L\in\BL(\HH,\GG)$ and its adjoint is the operator 
$L^*\in\BL(\GG,\HH)$ defined by
\begin{equation}
L^*\colon(\xi_k)_{k\in\KK}\mapsto\sum_{k\in\KK}\xi_ke_k.
\end{equation}
On the other hand, it follows from our assumptions that
$g\in\Gamma_0(\GG)$ (Example~\ref{ex:chaux07}) and that
\begin{equation}
\label{e:1eroct2008}
g^*\colon\GG\to\RX\colon(\nu_k)_{k\in\KK}\mapsto
\sum_{k\in\KK}\phi^*_k(\nu_k).
\end{equation}
In addition, \eqref{e:1939} implies that \eqref{e:1938} holds.
This shows that \eqref{e:prob11} is a special case of 
\eqref{e:prob1} and that \eqref{e:prob21} is a special case of
\eqref{e:prob2}.
We also observe that \eqref{e:frame1} and \eqref{e:defF} yield
\begin{equation}
\|L\|^2=\sup_{\|x\|=1}\|Lx\|^2=
\sup_{\|x\|=1}\sum_{k\in\KK}|\scal{x}{e_k}|^2\leq\delta.
\end{equation}
Hence, $\left[\varepsilon,2\delta^{-1}-\varepsilon\right]\subset
\left[\varepsilon,2\|L\|^{-2}-\varepsilon\right] $. Next, we derive 
from \eqref{e:conjugate} and 
\eqref{e:1937} that, for every $k\in\KK$, 
$\phi_k^*(0)=\sup_{\xi\in\RR}-\phi_k(\xi)
=-\inf_{\xi\in\RR}\phi_k(\xi)=\phi_k(0)=0$ and that $(\forall\nu\in\RR)$
$\phi_k^*(\nu)=\sup_{\xi\in\RR}\xi\nu-\phi_k(\xi)\geq-\phi_k(0)=0$.
In turn, we derive from \eqref{e:1eroct2008} and 
Example~\ref{ex:chaux07} (applied to the canonical orthonormal basis of
$\ell^2(\KK)$) that
\begin{equation}
(\forall\gamma\in\RPP)(\forall v=(\nu_k)_{k\in\KK}\in\GG)\quad
\prox_{\gamma g^*}v=\big(\prox_{\gamma\phi_k^*}\nu_k\big)_{k\in\KK}.
\end{equation}
Altogether, \eqref{e:main12} is a special case of Algorithm~\ref{algo:1}
with $(\forall n\in\NN)$ $a_n=(\alpha_{n,k})_{k\in\KK}$. Hence,
the assertions follow from Theorem~\ref{t:1}.
\end{proof}

\begin{remark}
Using \eqref{e:defF}, we can write the potential on the dictionary 
coefficients in Problem~\ref{prob:11} as
\begin{equation}
\label{e:2009-07-01}
g\circ L\colon x\mapsto\sum_{k\in\KK}\phi_k(\scal{x}{e_k}).
\end{equation}
\begin{enumerate}
\item
If $(e_k)_{k\in\KK}$ were an orthonormal basis in Problem~\ref{prob:11},
we would have $L^{-1}=L^*$ and $\prox_{g\circ L}$ would be 
decomposable as $L^*\circ\prox_g\circ L$ \cite[Lemma~2.8]{Smms05}.
As seen in the Introduction, we could then approach \eqref{e:prob11}
directly via forward-backward, Douglas-Rachford, or Dykstra-like 
splitting, depending on the properties of $f$. Our duality framework 
allows us to solve \eqref{e:prob11} for the much broader class of 
dictionaries satisfying \eqref{e:frame1} and, in particular, for 
frames \cite{Daub92}. 
\item
Suppose that each $\phi_k$ in Problem~\ref{prob:11} is of the form
$\phi_k=\psi_k+\sigma_{\Omega_k}$, where $\psi_k\in\Gamma_0(\RR)$ 
satisfies $\psi_k\geq\psi_k(0)=0$ and is differentiable at $0$ with 
$\psi_k'(0)=0$, and where $\Omega_k$ is a nonempty closed interval. 
In this case, \eqref{e:2009-07-01} aims at promoting the sparsity of 
the solution in the dictionary $(e_k)_{k\in\KK}$ \cite{Siop07} (a 
standard case is when, for every $k\in\KK$, $\psi_k=0$ and 
$\Omega_k=[-\omega_k,\omega_k]$, which gives rise to the standard 
weighted $\ell^1$ potential 
$x\mapsto\sum_{k\in\KK}\omega_k|\scal{x}{e_k}|$). Moreover, the
proximity operator $\prox_{\gamma_n\phi_k^*}$ in \eqref{e:main12} can 
be evaluated via Lemma~\ref{l:7} and Lemma~\ref{l:santiago06-1}. 
\end{enumerate}
\end{remark}

\subsection{Denoising with support functions}
\label{sec:34}

Suppose that $g$ in Problem~\ref{prob:1} is positively homogeneous, i.e.,
\begin{equation}
\label{e:poshom}
(\forall\lambda\in\RPP)(\forall y\in\GG)\quad g(\lambda y)=\lambda g(y).
\end{equation}
Instances of such functions arising in denoising problems can be found 
in \cite{Amar94,Beck09,Bect04,Cham05,Siop07,Smms05,Daub07,%
Noll98,Rudi92,Weis09} and in the examples below.
It follows from \eqref{e:poshom} and \cite[Theorem~2.4.2]{Aubi90} that 
$g$ is the support function of a nonempty closed convex set 
$D\subset\GG$, namely
\begin{equation}
\label{e:jpa}
g=\sigma_D=\sup_{v\in D}\scal{\cdot}{v},\quad\text{where}\quad
D=\partial g(0)=
\menge{v\in\GG}{(\forall y\in\GG)\;\;\scal{y}{v}\leq g(y)}.
\end{equation}
If we denote by 
$\operatorname{bar}D=\menge{y\in\GG}{\sup_{v\in D}\scal{y}{v}<\pinf}$ 
the barrier cone of $D$, we thus obtain the following instance of 
Problem~\ref{prob:1}.

\begin{problem}
\label{prob:41}
Let $z\in\HH$, $r\in\GG$, let $f\in\Gamma_0(\HH)$, let $D$ be a 
nonempty closed convex subset of $\GG$, and let $L$ be a nonzero 
operator in $\BL(\HH,\GG)$ such that 
\begin{equation}
\label{e:1941}
r\in\sri\big(L(\dom f)-\operatorname{bar}D \big).
\end{equation}
The problem is to 
\begin{equation}
\label{e:prob41}
\underset{x\in\HH}{\mathrm{minimize}}\;\;
f(x)+\sigma_D(Lx-r)+\frac12\|x-z\|^2,
\end{equation}
and its dual is to
\begin{equation}
\label{e:prob42}
\underset{v\in D}{\mathrm{minimize}}\;\;
\widetilde{f^*}(z-L^*v)+\scal{v}{r}.
\end{equation}
\end{problem}

\begin{proposition}
\label{p:41}
Let $(a_{n})_{n\in\NN}$ be a sequence in $\GG$ such that
$\sum_{n\in\NN}\|a_n\|<\pinf$, let $(b_{n})_{n\in\NN}$ be a 
sequence in $\HH$ such that $\sum_{n\in\NN}\|b_n\|<\pinf$, and 
let $(x_n)_{n\in\NN}$ and $(v_n)_{n\in\NN}$ be sequences
generated by the following routine.
\begin{equation}
\label{e:main41}
\begin{array}{l}
\operatorname{Initialization}\\
\left\lfloor
\begin{array}{l}
\varepsilon\in\left]0,\min\{1,\|L\|^{-2}\}\right[\\[1mm]
v_0\in\GG\\[1mm]
\end{array}
\right.\\[5mm]
\operatorname{For}\;n=0,1,\ldots\\
\left\lfloor
\begin{array}{l}
x_n=\prox_f(z-L^*v_n)+b_n\\[1mm]
\gamma_n\in\left[\varepsilon,2\|L\|^{-2}-\varepsilon\right]\\[1mm]
\lambda_n\in\left[\varepsilon,1\right]\\
v_{n+1}=v_n+\lambda_n\big(P_D(v_n+\gamma_n(Lx_n-r))+a_n-v_n\big).
\end{array}
\right.\\[2mm]
\end{array}
\end{equation}
Then the following hold, where $x$ designates the primal
solution to Problem~\ref{prob:41}.
\begin{enumerate}
\item
\label{p:41i}
$(v_n)_{n\in\NN}$ converges weakly to a solution $v$ to 
\eqref{e:prob42} and $x=\prox_f(z-L^*v)$.
\item
\label{p:41i+}
$(x_n)_{n\in\NN}$ converges strongly to $x$.
\end{enumerate}
\end{proposition}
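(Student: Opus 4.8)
The plan is to recognize Problem~\ref{prob:41} as the instance of Problem~\ref{prob:1} obtained by taking $g=\sigma_D$, to identify the corresponding specialization of Algorithm~\ref{algo:1}, and then to invoke Theorem~\ref{t:1} verbatim.

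First I would record the facts about $g=\sigma_D$ that are needed. Since $D$ is a nonempty closed convex subset of $\GG$, the support function $\sigma_D$ belongs to $\Gamma_0(\GG)$: it is convex and lower semicontinuous as a supremum of continuous linear functionals, and it is proper because $D\neq\emp$ forces $\sigma_D>\minf$ while $\sigma_D(0)=0<\pinf$. Moreover $\dom\sigma_D=\operatorname{bar}D$, so the qualification condition \eqref{e:1941} is exactly \eqref{e:1938} for this choice of $g$; hence Problem~\ref{prob:41} is a genuine special case of Problem~\ref{prob:1}. Next, the conjugate of the support function of a closed convex set is the indicator of that set: by \eqref{e:support} and \eqref{e:conjugate} we have $\iota_D^*=\sigma_D$, and since $\iota_D\in\Gamma_0(\GG)$ the Fenchel--Moreau theorem gives $g^*=\sigma_D^*=\iota_D^{**}=\iota_D$. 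Therefore $g^*(v)+\scal{v}{r}=\iota_D(v)+\scal{v}{r}$, so Problem~\ref{prob:2} for this data amounts to minimizing $\widetilde{f^*}(z-L^*v)+\scal{v}{r}$ over $v\in D$, which is precisely \eqref{e:prob42}.

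Then I would simplify the dual update in \eqref{e:main1}. For every $n\in\NN$ and every $\gamma_n\in\RPP$ one has $\gamma_n g^*=\gamma_n\iota_D=\iota_D$, whence $\prox_{\gamma_n g^*}=\prox_{\iota_D}=P_D$ by Example~\ref{ex:1938}\ref{ex:1938i}. Substituting this into the last line of \eqref{e:main1} turns it into
\[
v_{n+1}=v_n+\lambda_n\big(P_D(v_n+\gamma_n(Lx_n-r))+a_n-v_n\big),
\]
which is exactly the update in \eqref{e:main41}; the remaining steps (the step $x_n=\prox_f(z-L^*v_n)+b_n$ and the ranges of $\varepsilon$, $\gamma_n$, $\lambda_n$) are already identical. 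Thus \eqref{e:main41} is literally the instance of Algorithm~\ref{algo:1} associated with $g=\sigma_D$, driven by the same absolutely summable error sequences $(a_n)_{n\in\NN}$ and $(b_n)_{n\in\NN}$.

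Finally I would apply Theorem~\ref{t:1}: part~\ref{t:1i} yields the weak convergence of $(v_n)_{n\in\NN}$ to a solution $v$ of Problem~\ref{prob:2} --- equivalently of \eqref{e:prob42} --- together with $x=\prox_f(z-L^*v)$, which is claim~\ref{p:41i}; part~\ref{t:1x} yields the strong convergence of $(x_n)_{n\in\NN}$ to the primal solution $x$, which is claim~\ref{p:41i+}. I do not expect a real obstacle here; the only points requiring a line of argument are that $\sigma_D\in\Gamma_0(\GG)$ and that $\sigma_D^*=\iota_D$, both standard, and the rest is a matter of matching the notation of \eqref{e:main1} with that of \eqref{e:main41}.
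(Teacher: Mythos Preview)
Your proposal is correct and follows essentially the same approach as the paper: set $g=\sigma_D$, use $g^*=\iota_D$ so that $\prox_{\gamma_n g^*}=P_D$, identify \eqref{e:main41} as the resulting instance of Algorithm~\ref{algo:1}, and invoke Theorem~\ref{t:1}. The paper's proof is simply a terser version of what you wrote.
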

\begin{proof}
The assertions follow from Theorem~\ref{t:1} with $g=\sigma_D$. 
Indeed, $g^*=\iota_D$ and, therefore, $(\forall\gamma\in\RPP)$ 
$\prox_{\gamma g^*}=P_D$.
\end{proof}

\begin{remark}
\label{r:2009-01-07}
Condition \eqref{e:1941} is trivially satisfied when $D$ is 
bounded, in which case $\operatorname{bar}D=\GG$. 
\end{remark}

In the remainder of this section, we focus on examples that feature 
a bounded set $D$ onto which projections are easily computed. 

\begin{example}
\label{ex:41}
In Problem~\ref{prob:41}, let $D$ be the closed unit ball of $\GG$. Then 
$P_D\colon y\mapsto y/\max\{\|y\|,1\}$ and $\sigma_D=\|\cdot\|$. Hence,
\eqref{e:prob41} becomes
\begin{equation}
\label{e:prob411}
\underset{x\in\HH}{\mathrm{minimize}}\;\;
f(x)+\|Lx-r\|+\frac12\|x-z\|^2,
\end{equation}
and the dual problem \eqref{e:prob42} becomes
\begin{equation}
\label{e:prob412}
\underset{v\in\GG,\,\|v\|\leq 1}{\mathrm{minimize}}\;\;
\widetilde{f^*}(z-L^*v)+\scal{v}{r}.
\end{equation}
\end{example}

In signal recovery, variational formulations involving positively
homogeneous functionals to control the behavior of the gradient of 
the solutions play a prominent role, e.g., 
\cite{Aube06,Borw96,Huan08,Noll98,Rudi92}. 
In the context of image recovery, such a formulation can be obtained 
by revisiting Problem~\ref{prob:41} with $\HH=H_0^1(\Omega)$, where 
$\Omega$ is a bounded open domain in $\RR^2$, 
$\GG=L^2(\Omega)\oplus L^2(\Omega)$, $L=\nabla$, 
$D=\menge{y\in\GG}{|y|_2\leq\mu~\text{a.e.}}$ where $\mu\in\RPP$, 
and $r=0$. With this scenario, \eqref{e:prob41} is equivalent to 
\begin{equation}
\label{e:prob51}
\underset{x\in H_0^1(\Omega)}{\mathrm{minimize}}\;\;
f(x)+\mu\operatorname{tv}(x)+\frac12\|x-z\|^2,
\end{equation}
where $\operatorname{tv}(x)=\int_\Omega|\nabla x(\omega)|_{2}d\omega$.
In mechanics, such minimization problems have been studied extensively
for certain potentials $f$ \cite{Ekel99}. For instance, $f=0$ yields 
Mossolov's problem and its dual analysis is carried out in 
\cite[Section~IV.3.1]{Ekel99}. In image processing, Mossolov's problem
corresponds to the total variation denoising problem. Interestingly,
in 1980, Mercier \cite{Merc80} proposed a dual projection 
algorithm to solve Mossolov's problem. This approach was independently
rediscovered by Chambolle in a discrete setting \cite{Cham04,Cham05}.
Next, we apply our framework to a discrete version of \eqref{e:prob51} 
for $N\times N$ images. This will extend the method of \cite{Cham05}, 
which is restricted to $f=0$, and provide a formal proof for its 
convergence (see also \cite{Weis09} for an alternative scheme based on 
Nesterov's algorithm \cite{Nest05}). 

By way of preamble, let us introduce some notation. We denote by
$y=\big(\eta^{(1)}_{k,l},\eta^{(2)}_{k,l}\big)_{1\leq k,l\leq N}$ a 
generic element in $\RR^{N\times N}\oplus\RR^{N\times N}$ and by
\begin{equation}
\label{e:grad1}
\nabla\colon\RR^{N\times N}\to\RR^{N\times N}\oplus\RR^{N\times N}\colon
\big(\xi_{k,l}\big)_{1\leq k,l\leq N}\mapsto
\big(\eta^{(1)}_{k,l},\eta^{(2)}_{k,l}\big)_{1\leq k,l\leq N}
\end{equation}
the discrete gradient operator, where
\begin{equation}
\label{e:grad2}
(\forall (k,l)\in\{1,\ldots,N\}^2)\quad
\begin{cases}
\eta^{(1)}_{k,l}=\xi_{k+1,l}-\xi_{k,l},&\text{if}\;\;k<N;\\
\eta^{(1)}_{N,l}=0;\\
\eta^{(2)}_{k,l}=\xi_{k,l+1}-\xi_{k,l},&\text{if}\;\;l<N;\\
\eta^{(2)}_{k,N}=0.
\end{cases}
\end{equation}
Now let $p\in\left[1,\pinf\right]$. Then $p^*$ is the 
conjugate index of $p$, i.e., $p^*=\pinf$ if $p=1$, $p^*=1$ if 
$p=\pinf$, and $p^*=p/(p-1)$ otherwise. We define the $p$-th order
discrete total variation function as
\begin{equation}
\label{e:tv1}
\operatorname{tv}_p\colon\RR^{N\times N}\to\RR\colon x
\mapsto||\nabla x||_{p,1}\,,
\end{equation}
where 
\begin{equation}
\label{e:hue1}
(\forall y\in\RR^{N\times N}\oplus\RR^{N\times N})\quad\|y\|_{p,1}=
\sum_{1\leq k,l\leq N}\big|(\eta^{(1)}_{k,l},\eta^{(2)}_{k,l})\big|_p,
\end{equation}
with
\begin{equation}
\label{e:hue2}
\big(\forall(\eta^{(1)},\eta^{(2)})\in\RR^2\big)\quad
\big|(\eta^{(1)},\eta^{(2)})\big|_p=
\begin{cases}
\sqrt[p]{|\eta^{(1)}|^p+|\eta^{(2)}|^p},
&\text{if}\;\;p<\pinf;\\
\max\big\{|\eta^{(1)}|,|\eta^{(2)}|\big\},
&\text{if}\;\;p=\pinf.
\end{cases}
\end{equation}
In addition, the discrete divergence operator is defined as
\cite{Cham04} 
\begin{equation}
\label{e:div1}
\operatorname{div}\colon\RR^{N\times N}\oplus\RR^{N\times N}\to
\RR^{N\times N}\colon
\big(\eta^{(1)}_{k,l},\eta^{(2)}_{k,l}\big)_{1\leq k,l\leq N}\mapsto
\big(\xi^{(1)}_{k,l}+\xi^{(2)}_{k,l}\big)_{1\leq k,l\leq N},
\end{equation}
where
\begin{equation}
\label{e:div2}
\xi^{(1)}_{k,l}=
\begin{cases}
\eta^{(1)}_{1,l}&\text{if}\;\;k=1;\\
\eta^{(1)}_{k,l}-\eta^{(1)}_{k-1,l}&\text{if}\;\;1<k<N;\\
-\eta^{(1)}_{N-1,l}&\text{if}\;\;k=N;
\end{cases}
\quad\text{and}\quad
\xi^{(2)}_{k,l}=
\begin{cases}
\eta^{(2)}_{k,1}&\text{if}\;\;l=1;\\
\eta^{(2)}_{k,l}-\eta^{(2)}_{k,l-1}&\text{if}\;\;1<l<N;\\
-\eta^{(2)}_{k,N-1}&\text{if}\;\;l=N.
\end{cases}
\end{equation}

\begin{problem}
\label{prob:LLG}
Let $z\in\RR^{N\times N}$, let $f\in\Gamma_0(\RR^{N\times N})$, 
let $\mu\in\RPP$, let $p\in\left[1,\pinf\right]$, and set
\begin{equation}
\label{e:hue3}
D_p=\Menge{\big(\nu^{(1)}_{k,l},\nu^{(2)}_{k,l}\big)_{1\leq k,l\leq N}
\in\RR^{N\times N}\oplus\RR^{N\times N}}
{\max_{1\leq k,l\leq N}\big|(\nu^{(1)}_{k,l},
\nu^{(2)}_{k,l})\big|_{p^*}\leq 1}.
\end{equation}
The problem is to 
\begin{equation}
\label{e:probLLG}
\underset{x\in\RR^{N\times N}}{\mathrm{minimize}}\;\;
f(x)+\mu\operatorname{tv}_p(x)+\frac12\|x-z\|^2,
\end{equation}
and its dual is to
\begin{equation}
\label{e:probLLG'}
\underset{v\in D_p}
{\mathrm{minimize}}\;\;\widetilde{f^*}(z+\mu\operatorname{div}v).
\end{equation}
\end{problem}

\begin{proposition}
\label{p:LLG}
Let $\big(\alpha_{n,k,l}^{(1)}\big)_{n\in\NN}$ and 
$\big(\alpha_{n,k,l}^{(2)}\big)_{n\in\NN}$ be sequences 
in $\RR^{N\times N}$ such that 
\begin{equation}
\label{e:mainLLG2nde}
\sum_{n\in\NN}\sqrt{\sum_{1\leq k,l\leq N}
\big|\alpha_{n,k,l}^{(1)}\big|^2+\big|\alpha_{n,k,l}^{(2)}\big|^2}<\pinf,
\end{equation}
let $(b_{n})_{n\in\NN}$ be a 
sequence in $\RR^{N\times N}$ such that $\sum_{n\in\NN}\|b_n\|<\pinf$, 
and let $(x_n)_{n\in\NN}$ and $(v_n)_{n\in\NN}$ be sequences
generated by the following routine, where 
$(\pi^{(1)}_p{\mathsf y},\pi^{(2)}_p{\mathsf y})$ denotes the 
projection of a point ${\mathsf y}\in\RR^2$ onto the closed
unit $\ell^{p^*}$ ball in the Euclidean plane. 
\begin{equation}
\label{e:mainLLG}
\begin{array}{l}
\operatorname{Initialization}\\
\left\lfloor
\begin{array}{l}
\varepsilon\in\left]0,\min\{1,\mu^{-1}/8\}\right[\\[1mm]
v_0=\big(\nu^{(1)}_{0,k,l},\nu^{(2)}_{0,k,l}\big)_{1\leq k,l\leq N}
\in\RR^{N\times N}\oplus\RR^{N\times N}\\[1mm]
\end{array}
\right.\\[5mm]
\operatorname{For}\;n=0,1,\ldots\\
\left\lfloor
\begin{array}{l}
x_n=\prox_f(z+\mu\operatorname{div}v_n)+b_n\\[1mm]
\tau_n\in\left[\varepsilon,\mu^{-1}/4-\varepsilon\right]\\[1mm]
\big(\zeta^{(1)}_{n,k,l},\zeta^{(2)}_{n,k,l}\big)_{1\leq k,l\leq N}=
v_n+\tau_n\nabla x_n\\[2mm]
\lambda_n\in\left[\varepsilon,1\right]\\
\operatorname{For~every}\;(k,l)\in\{1,\dots,N\}^2\\[2mm]
\left\lfloor
\begin{array}{l}
\nu^{(1)}_{n+1,k,l}=\nu^{(1)}_{n,k,l}+\lambda_n\Big(
\pi_p^{(1)}\big(\zeta^{(1)}_{n,k,l},\zeta^{(2)}_{n,k,l}\big)
+\alpha^{(1)}_{n,k,l}-\nu^{(1)}_{n,k,l}\Big)\\[4mm]
\nu^{(2)}_{n+1,k,l}=\nu^{(2)}_{n,k,l}+\lambda_n\Big(
\pi_p^{(2)}\big(\zeta^{(1)}_{n,k,l},\zeta^{(2)}_{n,k,l}\big)
+\alpha^{(2)}_{n,k,l}-\nu^{(2)}_{n,k,l}\Big)\\[4mm]
\end{array}
\right.\\[8mm]
v_{n+1}=\big(\nu^{(1)}_{n+1,k,l},
\nu^{(2)}_{n+1,k,l}\big)_{1\leq k,l\leq N}\\
\end{array}
\right.
\end{array}
\end{equation}
Then $(v_n)_{n\in\NN}$ converges to a solution $v$ to 
\eqref{e:probLLG'}, $x=\prox_f(z+\mu\operatorname{div}v)$ is the 
primal solution to Problem~\ref{prob:LLG}, and $x_n\to x$.
\end{proposition}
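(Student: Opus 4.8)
The plan is to cast Problem~\ref{prob:LLG} as the instance of Problem~\ref{prob:41} with $\HH=\RR^{N\times N}$, $\GG=\RR^{N\times N}\oplus\RR^{N\times N}$, $L=\nabla$, $r=0$, $f$ as given, and $g=\mu\|\cdot\|_{p,1}$. The first step is to identify the closed convex set $D$ for which $g=\sigma_D$. Since the Euclidean-plane norms $|\cdot|_p$ and $|\cdot|_{p^*}$ are mutually dual and the pixelwise aggregation in \eqref{e:hue1} is an $\ell^1$ sum, a direct support-function computation (separating the supremum over the product set pixel by pixel) yields $\|\cdot\|_{p,1}=\sigma_{D_p}$, hence $g=\mu\sigma_{D_p}=\sigma_D$ with $D=\mu D_p$. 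Because $D$ is bounded we have $\operatorname{bar}D=\GG$, so the qualification condition \eqref{e:1941} holds trivially ($r=0\in\GG$), and Problem~\ref{prob:LLG} is a genuine instance of Problem~\ref{prob:41}. Using $\nabla^*=-\operatorname{div}$ \cite{Cham04}, the abstract dual \eqref{e:prob42} is $\min_{V\in\mu D_p}\widetilde{f^*}(z+\operatorname{div}V)$, and the substitution $V=\mu v$ (with $v\in D_p$) turns it into \eqref{e:probLLG'} and turns $\prox_f(z-L^*V)$ into $\prox_f(z+\mu\operatorname{div}v)$.

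Next I would match the recursions. Writing the dual iterate of \eqref{e:main41} as $V_n=\mu v_n$ and using both $P_{\mu D_p}(\mu y)=\mu P_{D_p}(y)$ and the fact that $D_p$ is the Cartesian product over the pixels $(k,l)$ of the unit $\ell^{p^*}$ balls of $\RR^2$ — so that $P_{D_p}$ acts on pixel $(k,l)$ through $\big(\pi_p^{(1)},\pi_p^{(2)}\big)$ — the update $V_{n+1}=V_n+\lambda_n\big(P_D(V_n+\gamma_n(Lx_n-r))+a_n-V_n\big)$ of \eqref{e:main41}, with $\gamma_n=\mu\tau_n$ and $a_n=\mu\big(\alpha^{(1)}_{n,k,l},\alpha^{(2)}_{n,k,l}\big)_{1\le k,l\le N}$, becomes exactly the per-pixel update of \eqref{e:mainLLG} (with $(\zeta^{(1)}_n,\zeta^{(2)}_n)=v_n+\tau_n\nabla x_n$); likewise $x_n=\prox_f(z-L^*V_n)+b_n$ becomes the first line of \eqref{e:mainLLG}.

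It then remains to verify that all parameters and perturbations lie in the admissible ranges of Proposition~\ref{p:41}. The bound $\|\nabla\|^2\le8$ \cite{Cham04} gives $2\|L\|^{-2}\ge1/4$; hence for $\varepsilon\in\left]0,\min\{1,\mu^{-1}/8\}\right[$ and $\tau_n\in[\varepsilon,\mu^{-1}/4-\varepsilon]$ one has $\gamma_n=\mu\tau_n\in[\mu\varepsilon,1/4-\mu\varepsilon]\subset[\varepsilon',2\|L\|^{-2}-\varepsilon']$ and $\lambda_n\in[\varepsilon,1]\subset[\varepsilon',1]$, where $\varepsilon'=\min\{\varepsilon,\mu\varepsilon\}\in\left]0,\min\{1,\|L\|^{-2}\}\right[$ (indeed $\varepsilon'\le\mu\varepsilon<1/8\le\|L\|^{-2}$). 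Moreover $\sum_{n\in\NN}\|b_n\|<\pinf$ by hypothesis, and $\sum_{n\in\NN}\|a_n\|=\mu\sum_{n\in\NN}\sqrt{\sum_{1\le k,l\le N}|\alpha^{(1)}_{n,k,l}|^2+|\alpha^{(2)}_{n,k,l}|^2}<\pinf$ by \eqref{e:mainLLG2nde}. Therefore \eqref{e:mainLLG} is the image under $V\mapsto V/\mu$ of an instance of \eqref{e:main41}, and Proposition~\ref{p:41} yields that $(\mu v_n)_{n\in\NN}$, hence $(v_n)_{n\in\NN}$ (weak and strong convergence coinciding in this finite-dimensional space), converges to a solution $v$ of \eqref{e:probLLG'}, that $x=\prox_f(z-L^*(\mu v))=\prox_f(z+\mu\operatorname{div}v)$ is the primal solution to Problem~\ref{prob:LLG}, and that $x_n\to x$.

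The step I expect to be most delicate is the bookkeeping: pinning down $D=\mu D_p$ through the mixed-norm duality, and then propagating the factor $\mu$ consistently through the dual variable, the proximal/projection steps, the error sequences, and in particular the step-size constraint — which is precisely why the operator-norm estimate $\|\nabla\|^2\le8$ enters the statement through the threshold $\mu^{-1}/4$.
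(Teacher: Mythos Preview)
Your proof is correct and follows essentially the same route as the paper: both reduce Problem~\ref{prob:LLG} to Proposition~\ref{p:41} via the identification $\|\cdot\|_{p,1}=\sigma_{D_p}$ and the pixelwise decomposition of $P_{D_p}$. The only difference is cosmetic: the paper absorbs the scalar $\mu$ into the linear operator by setting $L=\mu\nabla$ and $D=D_p$ (so that the dual variable of \eqref{e:main41} is $v_n$ itself and $\tau_n=\mu\gamma_n$), whereas you set $L=\nabla$ and $D=\mu D_p$ and then undo this with the change of variable $V_n=\mu v_n$; your extra bookkeeping for the step sizes and error bounds is handled correctly.
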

\begin{proof}
It follows from \eqref{e:hue1} and \eqref{e:hue3} that
$\|\cdot\|_{p,1}=\sigma_{D_p}$. Hence, Problem~\ref{prob:LLG}
is a special case of Problem~\ref{prob:41} with $\HH=\RR^{N\times N}$,
$\GG=\RR^{N\times N}\oplus\RR^{N\times N}$, $L=\mu\nabla$ (see
\eqref{e:grad1}), $D=D_p$, and $r=0$. Moreover,
$L^*=-\mu\operatorname{div}$ (see \eqref{e:div1}),
$\|L\|=\mu\|\nabla\|\leq 2\sqrt{2}\mu$ \cite{Cham04}, and the 
projection of $y$ onto the set $D_p$ of \eqref{e:hue3} can be 
decomposed coordinatewise as
\begin{equation}
\label{e:PCp}
P_{D_p}y=\Big(\pi^{(1)}_p\big(\eta^{(1)}_{k,l},\eta^{(2)}_{k,l}\big),
\pi^{(2)}_p\big(\eta^{(1)}_{k,l},\eta^{(2)}_{k,l}\big)
\Big)_{1\leq k,l\leq N}.
\end{equation}
Altogether, upon setting, for every $n\in\NN$, $\tau_n=\mu\gamma_n$ and 
$a_n=\big(\alpha_{n,k,l}^{(1)},\alpha_{n,k,l}^{(2)}
\big)_{1\leq k,l\leq N}$, \eqref{e:mainLLG} appears as a special case of
\eqref{e:main41}. The results therefore follow from 
\eqref{e:mainLLG2nde} and Proposition~\ref{p:41}.
\end{proof}

\begin{remark}
\label{r:chamb}
The inner loop in \eqref{e:mainLLG} performs the projection step. For
certain values of $p$, this projection can be computed explicitly and we 
can therefore dispense with errors. Thus, if $p=1$, then $p^*=\pinf$ 
and the projection loop becomes
\begin{equation}
\label{e:main442}
\begin{array}{l}
\operatorname{For~every}\;(k,l)\in\{1,\dots,N\}^2\\
\left\lfloor
\begin{array}{l}
\nu^{(1)}_{n+1,k,l}=\nu^{(1)}_{n,k,l}+\lambda_n\Bigg(
\Frac{\zeta^{(1)}_{n,k,l}}{\max\big\{1,\big|\zeta^{(1)}_{n,k,l}
\big|\big\}}-\nu^{(1)}_{n,k,l}\Bigg)\\[1mm]
\nu^{(2)}_{n+1,k,l}=\nu^{(2)}_{n,k,l}+\lambda_n\Bigg(
\Frac{\zeta^{(2)}_{n,k,l}}
{\max\big\{1,\big|\zeta^{(2)}_{n,k,l}\big|\big\}}
-\nu^{(2)}_{n,k,l}\Bigg).
\end{array}
\right.\\[2mm]
\end{array}
\end{equation}
Likewise, if $p=2$, then $p^*=2$ and the projection loop becomes
\begin{equation}
\begin{array}{l}
\operatorname{For~every}\;(k,l)\in\{1,\dots,N\}^2\\[2mm]
\left\lfloor
\begin{array}{l}
\nu^{(1)}_{n+1,k,l}=\nu^{(1)}_{n,k,l}+\lambda_n\Bigg(
\Frac{\zeta^{(1)}_{n,k,l}}
{\max\big\{1,\big|\big(\zeta^{(1)}_{n,k,l},
\zeta^{(2)}_{n,k,l}\big)\big|_2\big\}}-\nu^{(1)}_{n,k,l}\Bigg)\\[5mm]
\nu^{(2)}_{n+1,k,l}=\nu^{(2)}_{n,k,l}+\lambda_n\Bigg(
\Frac{\zeta^{(2)}_{n,k,l}}
{\max\big\{1,\big|\big(\zeta^{(1)}_{n,k,l},
\zeta^{(2)}_{n,k,l}\big)\big|_2\big\}}-\nu^{(2)}_{n,k,l}\Bigg).
\end{array}
\right.\\[2mm]
\end{array}
\end{equation}
In the special case when $f=0$, $\lambda_n\equiv 1$, and 
$\tau_n\equiv\tau\in\left]0,\mu^{-1}/4\right[$
the two resulting algorithms reduce to the popular methods proposed
in \cite{Cham05}. Finally, if $p=\pinf$, then $p^*=1$ and the efficient
scheme described in \cite{Vand08} to project onto the $\ell^1$ ball 
can be used.
\end{remark}

\end{document}